\newtheorem{thm}{Theorem}[section]\theoremstyle{plain}
\newtheorem{theorem}[thm]{Theorem}\theoremstyle{plain}
\newtheorem{proposition}[thm]{Proposition}\theoremstyle{plain}
\newtheorem{lemma}[thm]{Lemma}\theoremstyle{plain}
\theoremstyle{plain}
\theoremstyle{plain}
\newtheorem{claim}[thm]{Claim}\theoremstyle{plain}
\newtheorem{corollary}[thm]{Corollary}\theoremstyle{plain}
\theoremstyle{plain}
\theoremstyle{plain}
\theoremstyle{plain}
\theoremstyle{plain}
\theoremstyle{plain}
\newtheorem{conjecture}[thm]{Conjecture}\theoremstyle{plain}
\theoremstyle{plain}
\DeclareMathOperator{\cl}{cl}
\DeclareMathOperator{\rank}{rank}
\newcommand{\sm}{{\setminus}}
\newcommand{\R}{\mathbb{R}}
\newcommand{\MC}{{\mathcal C}}
\newcommand{\MX}{{\mathcal X}}
\newcommand{\valD}{{\rm val}_{\rm D}}
\begin{document}

\begin{frontmatter}[classification=text]

\title{Abstract 3-Rigidity and Bivariate $C_2^1$-Splines~II: Combinatorial
Characterization} 

\author[katie]{Katie Clinch
}
\author[bill]{Bill Jackson
}
\author[shin]{Shin-ichi Tanigawa
}

\begin{abstract}
We showed in the first paper of this series that the generic $C_2^1$-cofactor matroid is the unique maximal abstract $3$-rigidity matroid. In this paper we obtain a combinatorial characterization of independence in this matroid.
This solves the cofactor counterpart of the combinatorial characterization problem for the rigidity of generic 3-dimensional bar-joint frameworks.
We use our characterization to verify that the counterparts of conjectures of Dress
(on the rank function) and  Lov{\'a}sz and Yemini (which suggested a  sufficient connectivity condition for rigidity) hold for the $C_2^1$-cofactor matroid. 

\end{abstract}
\end{frontmatter}

\section{Introduction}

We will consider a {\em $d$-dimensional (bar-joint) framework},  which is a pair consisting of a finite simple graph $G=(V,E)$ and a map $p:V\rightarrow \mathbb{R}^d$. It is {\em rigid} if every continuous motion of the vertices  of $G$ in $\mathbb{R}^d$ which preserves the lengths of the edges,  results in a framework which is congruent to $(G,p)$. The framework has the stronger property of being {\em infinitesimally rigid} if every length preserving infinitesimal motion of the vertices is induced by an infinitesimal isometry of $\mathbb{R}^d$. 
Maxwell \cite{Max} gave the following necessary condition for
{infinitesimal rigidity}: if $(G,p)$ is an
infinitesimally rigid $d$-dimensional framework with at least $d$ vertices, then $G$ has a spanning subgraph $H$ such that
\begin{itemize}
\item $|E(H)|=d|V(H)|-{d+1\choose 2}$, and
\item $|E(H')|\leq d|V(H')|-{d+1\choose 2}$ for any subgraph $H'$ of $H$ with $|V(H')|\geq d$.
\end{itemize}
It is straightforward to  check that this condition is also sufficient when $d=1$.
A celebrated result of Pollaczek-Geiringer~\cite{P-G}, rediscovered by  Laman \cite{Lam}, shows that Maxwell's condition is also sufficient when $d=2$ {\em if $p$ is generic}, i.e., the set of coordinates in $p$ is algebraically independent over the rational field.
When $d\geq 3$, Maxwell's condition is no longer sufficient to imply infinitesimal rigidity, even for generic frameworks, and finding a combinatorial characterization for generic 3-dimensional rigidity  has been the central open problem in graph rigidity for many years, see for example~\cite{GSS93,Wsurvey}.
Maxwell's condition is known to be sufficient for generic 3-dimensional rigidity of some restricted classes such as triangulations of closed 2-surfaces (with/without holes)~\cite{CKT15,CKT18,F,FW}, graphs with no $K_5$-minor~\cite{N07}, and squares of  graphs~\cite{KT11}. We refer the reader to \cite{SW} for a recent survey on graph rigidity.

Gluck~\cite{Glu} observed that the properties of rigidity and infinitesimal rigidity   coincide when $p$ is  generic,
and  are completely determined by the graph $G$ and the dimension $d$. This fact motivated Asimow and Roth~\cite{AR78} to define  a graph $G$ as being {\em rigid in $\mathbb{R}^d$} if   some, or equivalently every, generic $d$-dimensional framework $(G,p)$  is infinitesimally rigid.
The rigidity of a graph $G$ is determined by the rank of the {\em generic $d$-dimensional rigidity matroid ${\cal R}_d(G)$ of $G$} and
exploring the combinatorial structure of ${\cal R}_d(G)$ using  machinery from matroid theory is a common approach to attack problems in rigidity, see for example~\cite{TW84,GSS93,Wsurvey}. We  will build on this matroidal approach to obtain a combinatorial characterization of a matroid from the theory of bivariate splines
which is conjectured to be equal to the generic 3-dimensional rigidity matroid.



Graver~\cite{G91} defined  the class of {\em abstract $d$-rigidity matroids} on the edge set of the  complete graph $K_n$ using two fundamental properties of rigidity in $d$-space. The generic $d$-dimensional rigidity matroid ${\cal R}_d(K_n)$  is an example of an abstract $d$-rigidity matroid, and Graver~\cite{G91} conjectured that, for all $d\geq 1$, ${\cal R}_d(K_n)$ is the unique maximal matroid in the set of all abstract $d$-rigidity matroids on $E(K_n)$ with respect to the weak order of matroids. Graver showed that his conjecture is true for $d=1,2$ but it was subsequently shown to be false for $d\geq 4$. It remains open for $d=3$. (See Section~\ref{sec:rigidity} of this paper or \cite{CJT1} for a more detailed discussion.)

Whiteley~\cite{Wsurvey} found a new candidate for a unique maximal abstract $d$-rigidity matroid from approximation theory by taking the row matroid of the {\em $C^{s-1}_s$-cofactor matrix} of a(ny) generic 2-dimensional framework $(K_n,p)$. This is  the   $|E(K_n)|\times (s+1)n$ matrix in which sets of consecutive $(s+1)$ columns are associated with  vertices,  rows are associated with  edges,
and the row associated to the  edge $e=v_iv_j$ with $i<j$ is
\[
\kbordermatrix{
 & & v_i & & v_j & \\
 e=v_iv_j & 0\cdots 0 & D_{ij} & 0 \cdots 0 & -D_{ij} & 0\cdots 0
},
\]
where  $D_{i,j}=((x_i-x_j)^s, (x_i-x_j)^{s-1}(y_i-y_j),\dots, (x_i-x_j)(y_i-y_j)^{s-1}, (y_i-y_j)^s)\in \mathbb{R}^{s+1}$ when $p(v_i)=(x_i,y_i)\in \mathbb{R}^2$ for each $v_i\in V(K_n)$.
We refer to the row matroid  of this matrix as the {\em $C_s^{s-1}$-cofactor  matroid} and denote it by ${\cal C}_{s}^{s-1}(K_n)$. 
Whiteley~\cite{Wsurvey} showed that  ${\cal C}_{d-1}^{d-2}(K_n)$ is an 
abstract $d$-rigidity matroid, that   ${\cal C}_{d-1}^{d-2}(K_n)={\cal R}_{d}(K_n)$ 
when $d=1,2$ and that   ${\cal C}_{d-1}^{d-2}(K_n)\not\preceq {\cal R}_{d}(K_n)$ 
when $d\geq 4$. He conjectured further that  ${\cal C}_{d-1}^{d-2}(K_n)$  is the  unique  maximal abstract $d$-rigidity matroid on $E(K_n)$  for all $d\geq 1$. Note that Whiteley's conjecture holds when $d=1,2$ by the above mentioned result of Graver. In our first paper~\cite{CJT1} in this series, we verified Whiteley's conjecture for $d=3$ by showing that ${\cal C}_{2}^{1}(K_n)$  is the  unique maximal abstract $3$-rigidity matroid (see Theorem~\ref{thm:cofactor} below). 

Because of the strong similarity between rigidity matroids and cofactor matroids, Whiteley~\cite[page 55]{W90} also remarked that
finding a combinatorial characterization of 
independence in 
the generic $C_2^1$-cofactor  matroid may be
as challenging as the corresponding problem for the generic $3$-dimensional rigidity matroid, and went on to conjecture that these two matroids are equal in \cite[Conjecture 10.3.2]{Wsurvey}. 
This paper solves the characterization problem for the generic $C_2^1$-cofactor  matroid (and equivalently for the maximal abstract 3-rigidity matroid) by giving a co-NP type characterization for independence (we will see below that an NP type characterization follows immediately from the Schwartz-Zippel Lemma).
We then use our characterization to verify the cofactor counterpart of two long-standing conjectures on the generic 3-dimensional rigidity matroid. 

Dress gave two conjectures on the rank of the 3-dimensional generic rigidity matroid in the 1980's. The first conjecture, which appeared in \cite{DDH83}, suggested a co-NP type  characterization for independence, but was subsequently disproved by Jackson and Jord{\'a}n \cite{JJ05}. The second conjecture, which was given at a rigidity conference in Montreal in 1987, see \cite{CDT,GSS93,T99}, describes the rank function in terms of the `rigid clusters' of the underlying graph and still remains open. It would give a useful structural property of the rigidity matroid but would not obviously give rise to a co-NP type characterization.  We prove in Theorem~\ref{thm:dress} below that the corresponding conjecture holds for the generic $C_2^1$-cofactor matroid.
We also show that  the modified versions of Dress's first conjecture given in \cite{JJ06,JDIMACS} hold for this matroid.

Lov{\'a}sz and Yemini \cite{LY82} proved that 6-connectivity is sufficient to imply that  graphs are generically rigid in 2-dimensional space, and conjectured that 12-connectivity is sufficient for  rigidity in 3-space.  We prove in Theorem~\ref{thm:12conn} that the corresponding statement is true for  the generic $C_2^1$-cofactor matroid 

The main technical innovation in our work is to find a new kind  of  rank formula for a matroid.
One of the major difficulties in attacking the 3-dimensional rigidity problem is the lack of understanding of how to construct a matroid based on Maxwell's condition when $d\geq 3$.
It is well-known that, for $d=1,2$, the edge sets satisfying Maxwell's condition form an independent set family of a matroid.  Lov{\'a}sz and Yemini~\cite{LY82} showed that the structure of this matroid can be completely understood using the theory of intersecting submodular functions.
Currently, this theory does  not seem to apply when  $d\geq 3$. In particular, it is not clear how to use this theory to obtain a polynomial algorithm to evaluate our formula for the rank function of the $C_2^1$-cofactor matroid.

Our rank formula 
was inspired by a matroid construction due to
Crapo~\cite{C70}. He defined an {\em erection} of a matroid as an inverse operation to truncation.
He showed that the set of all erections of a matroid $M$ forms a lattice under the weak order for matroids,  and defined the maximum matroid in this lattice to be the {\em free erection} of $M$.
A sequence of (free) erections starting from $M$ always terminates after a finite number of steps and
we refer to a matroid obtained by 
such a sequence as a
{\em (free) elevation} of $M$.
Examples due to Brylawski~\cite[Figure 7.9]{B86} and the second two authors~\cite[Theorem 5.4]{JT} show that the free elevation may not be the {\em unique} maximal element in the poset of all elevations of $M$, but we show in Lemma \ref{lem:unique} that it is always {\em a} maximal element. 
We then obtain 
an upper bound  on the rank function of 
all elevations of $M$ in terms of the non-spanning circuits of  $M$ 
in Lemma \ref{lem:rank1}. Conjecture~\ref{conj:rank} states that this upper bound is tight if and only if the free elevation of $M$ is the unique maximal element in the poset of all elevations of $M$.
Our main result, Theorem \ref{thm:rank}, shows that the upper bound is tight for the generic $C_2^1$-cofactor matroid and verifies a special case of this conjecture.

Our results are relevant to another long-standing open problem, the {\em polynomial identity testing problem for symbolic determinants} (or the {\em Edmonds problem}). In this problem, we are given a matrix $A$ with entries in $\mathbb{Q}[x_1,\dots, x_n]$ 
and we are asked to decide whether the rank of $A$ over $\mathbb{Q}(x_1,\dots, x_n)$ is at least a given number.
The Schwartz-Zippel Lemma~\cite{S80,Z79} implies that this problem admits a randomized polynomial time algorithm,
but developing a deterministic polynomial time algorithm is a major open problem in theoretical computer science.
The same  lemma  also tells us that the problem is in the class {\rm NP},
but it is not known whether it is in {\rm co-NP}.
One approach, pioneered by Tutte \cite{Tu}, Edmonds \cite{Ed} and Lov{\'a}sz \cite{L89}, is to give a good characterization for the rank of $A$
by showing it is the minimum value for a certain combinatorial optimization problem.
Our result offers a new example of this approach.
Indeed  Lov{\'a}sz~\cite[Section 5]{L89}   states that the generic 3-dimensional rigidity problem 
is an important special
case of  the polynomial identity testing problem. Our technique solves the closely related problem for generic $C_2^1$-cofactor matrices.

The research direction of this paper was motivated by the fundamental papers of Graver \cite{G91} and Whiteley \cite{Wsurvey}, and more recently by a talk given by Meera Sitharam at  a BIRS workshop
 in 2015 on her joint work with Jialong Cheng and Andrew Vince, see \cite{S15}, in which
she described a recursive procedure for constructing the  closure operator in a maximal matroid on the edge set of a 
 complete graph  in which every $K_5$-subgraph is a  circuit. 

Our paper is structured as follows. In Section \ref{sec:erections} we review the theory of matroid erections, using a primal approach instead of the traditional dual approach. We introduce the free elevation 
of a matroid $M$ in Section \ref{sec:c-matroid} and prove two key results:  Lemma \ref{lem:rank1} gives an upper bound on the rank of 
any elevation of $M$;
Lemma \ref{lem:covering} shows that if every element of 
$M$ is contained in a non-spanning circuit of $M$ then every cyclic flat
of 
the free elevation of $M$ is the union of non-spanning circuits of $M$. We describe the family of abstract $d$-rigidity matroids and $C_{d-1}^{d-2}$-cofactor matroids in Section \ref{sec:rigidity}. We obtain our main result, Theorem \ref{thm:rank}, which gives a polynomially verifiable characterization of the rank function of the maximal abstract 3-rigidity matroid ${\cal C}_{2}^{1}$ in Section \ref{sec:char}.
We give two alternative expressions for the rank function of ${\cal C}_{2}^{1}$ in Section \ref{sec:shell} and use these to obtain sufficient connectivity conditions for the ${\cal C}_{2}^{1}$-matroid of a graph to have maximum possible rank in Section \ref{sec:con}. We close with some  open problems and remarks in Section \ref{sec:close}.

\section{Matroid erections}\label{sec:erections}
Matroid erection, introduced by Crapo~\cite{C70}, is a key tool in this paper.
We give a detailed exposition of matroid erection in this preliminary section for the benefit of readers who are unfamiliar with the topic. For an introduction to the concepts below, see \cite{O11}.

Given a matroid $M$, we use $E_M$ to denote its ground set, ${\rm cl}_M$ to denote its closure operator, and $r_M$ to denote its rank function. We will often suppress the subscript $M$ when it is obvious which matroid we are referring to.
For $X\subseteq E$, $M|_X$ denotes the restriction of $M$ to $X$ and $M- X$ denotes $M|_{E\sm X}$. The contracted matroid $M/X$ is the matroid on $E\sm X$ in which a set $F\subseteq E\sm X$ is independent if and only if $F\cup X$ is independent in $M$. When $X=\{e\}$, we simplify the notation for matroid deletion and contraction to $M-e$ and $M/e$, respectively.

A set $X\subseteq E$ is said to be  {\em spanning} if ${\rm cl}_M(X)=E$ and to be a {\em flat} if ${\rm cl}_M(X)=X$. A {\em hyperplane} of $M$ is a flat $F$ with $r(F)=r(E)-1$. 
The poset of all flats ordered by set inclusion forms a geometric lattice by setting the meet and join of two flats $F_1$ and $F_2$ to be
 $F_1\cap F_2$, and 
 $\cl_M(F_1\cup F_2)$, respectively.
A pair $X,Y$ of  subsets of $E$ is said to be {\em modular} if
$r_M(X)+r_M(Y)=r_M(X\cap Y)+r_M(X\cup Y)$.
The {\em dual} of
$M$  is denoted by $M^*$.
The {\em weak order} on a set of all matroids with the same ground set $E$ is the partial order in which $M_1\preceq M_2$ if every independent set of $M_1$ is independent in $M_2$.

\paragraph{One-point extensions and elementary quotients.}
Given two matroids $M$ and $P$, we say that $P$ is a {\em one-point extension} of $M$ if $M=P- p$ for some $p\in E_P$.
The structure of one-point extensions can be understood by introducing the concept of modular cuts.
A family ${\cal F}$ of flats of $M$ is said to be a {\em modular cut} of $M$ if
it is up-closed in the lattice of flats and $X\cap Y\in {\cal F}$ for all modular pairs $X, Y$ in ${\cal F}$.
Given a modular cut ${\cal F}$ of $M$, we can define a matroid $P$ on $E_M+p$ as follows. For all $X\subseteq E_M$ we put
$r_P(X)=r_M(X)$,  and
\[
r_P(X+p)=\begin{cases}
r_M(X) & \mbox{if ${\rm cl}_M(X)\in {\cal F}$}, \\
r_M(X)+1 & \mbox{if ${\rm cl}_M(X)\not\in {\cal F}$.}
\end{cases}
\]
One can easily check that $r_P$ is indeed a matroid rank function.
We will denote the one-point extension of $M$ with respect to the modular cut $\cal F$ by $M+_{\cal F} p$.
Every one-point extension of $M$ can be uniquely constructed in this manner.
More precisely, the map ${\cal F}\mapsto M+_{\cal F} p$ is a bijection between the set of modular cuts of $M$ and the set of one-point extensions of $M$.
(See~\cite{O11} for more details.)

Given a modular cut $\cal F$ of $M$, the matroid $N=(M+_{\cal F} p)/p$,
is called the {\em elementary quotient} of $M$ with respect to ${\cal F}$.
Observe that $E_M=E_N=:E$ and, for all $X\subseteq E$,
\begin{equation}\label{eq:quotient}
r_N(X)=\begin{cases}
r_M(X)-1 & \mbox{if ${\rm cl}_M(X)\in {\cal F}$}, \\
r_M(X) & \mbox{if ${\rm cl}_M(X)\not\in {\cal F}$.}
\end{cases}
\end{equation}

\paragraph{Elementary lifts.}
 Given two matroids $M$ and $N$, we say that $N$ is  an {\em elementary lift} of $M$ if $M$ is an elementary quotient of $N$ i.e.~$N=P-p$ and $M=P/p$ for some $1$-point extension $P$ of $N$. In this case we can use matroid duality to deduce that $M^*=P^*-p$ and $N^*=P^*/p$. Hence $N$ is an elementary lift of $M$ if and only if $N^*$ is an elementary quotient of $M^*$.
The correspondence between elementary quotients and modular cuts now gives us a bijection between the set of elementary lifts of $M$ and the set of modular cuts of $M^*$. It will be helpful to describe this bijection in terms of the primal matroid $M$ rather than its dual $M^*$.

A set $X\subset E$ is said to be {\em cyclic} in $M$ if it is the union of circuits of $M$.
For our purposes, it will make sense to consider the empty set as a cyclic set.
Let ${\rm cyc}_N(X)$ be the largest cyclic subset of $X$, i.e., the set obtained from $X$ by removing the coloops in $M|_X$. The poset of all cyclic sets in $M$ ordered by set inclusion forms a lattice by setting the join and the meet of $C_1$ and $C_2$ to be  $C_1\cup C_2$ and ${\rm cyc}_M(C_1\cap C_2)$, respectively.

We say that a family ${\cal C}$ of cyclic subsets of $E$ is  a {\em modular cyclic family} if
${\cal C}$ is down-closed (in the lattice of cyclic sets) and,
for every modular pair $X, Y$ in $\cal C$, $X\cup Y\in {\cal C}$. Note that since modular cyclic families are down-closed and we consider $\emptyset$ to be cyclic, we have $\emptyset\in {\cal C}$ for every modular cyclic family $\cal C$. 

The following result gives a bijection between the modular cuts
of $M^*$
and the modular cyclic families in $M$.

\begin{proposition}\label{prop:modular_cyclic}
Let  ${\cal C}$ be a family of subsets of $E$.
Then ${\cal C}$ is a modular cyclic family in $M$ if and only if
 $\{E\setminus C: C\in {\cal C}\}$ is a modular cut in $M^*$.
\end{proposition}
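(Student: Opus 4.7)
The plan is to use the standard complement bijection $\phi:X\mapsto E\setminus X$ between cyclic sets of $M$ and flats of $M^*$, and show that it converts each defining axiom of a modular cyclic family into the corresponding axiom of a modular cut.

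First, I would recall the dual rank identity $r_{M^*}(A)=|A|-r_M(E)+r_M(E\setminus A)$. From this one obtains immediately that $e$ is a coloop of $M|_X$ if and only if $e\in \cl_{M^*}(E\setminus X)$, and hence that $E\setminus \tcon_M(X)=\cl_{M^*}(E\setminus X)$, where I abbreviate ${\rm cyc}_M$. In particular, $X$ is cyclic in $M$ iff $E\setminus X$ is a flat of $M^*$, so $\phi$ really does restrict to a bijection between cyclic sets of $M$ and flats of $M^*$. Since $\phi$ is order-reversing, ``${\cal C}$ is down-closed in the lattice of cyclic sets of $M$'' is equivalent to ``$\phi({\cal C})$ is up-closed in the lattice of flats of $M^*$''. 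Moreover $\phi$ swaps the lattice operations: the cyclic-lattice join $C_1\cup C_2$ maps to the flat-lattice meet $(E\setminus C_1)\cap(E\setminus C_2)$, and the cyclic-lattice meet $\tcon_M(C_1\cap C_2)$ maps to the flat-lattice join $\cl_{M^*}((E\setminus C_1)\cup(E\setminus C_2))$ by the identity above.

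Next, I would verify that modular pairs are preserved under $\phi$. Setting $F_i=E\setminus C_i$, the dual rank identity gives
\[
r_{M^*}(F_1)+r_{M^*}(F_2)-r_{M^*}(F_1\cap F_2)-r_{M^*}(F_1\cup F_2) = r_M(C_1)+r_M(C_2)-r_M(C_1\cup C_2)-r_M(C_1\cap C_2),
\]
after using $|F_1|+|F_2|=|F_1\cap F_2|+|F_1\cup F_2|$ and $E\setminus(F_1\cap F_2)=C_1\cup C_2$, $E\setminus(F_1\cup F_2)=C_1\cap C_2$. Hence $\{C_1,C_2\}$ is modular in $M$ iff $\{F_1,F_2\}$ is modular in $M^*$.

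Combining the two steps, closure of $\cal C$ under $C_1\cup C_2$ for modular pairs $C_1,C_2\in\cal C$ is, via $\phi$, exactly closure of $\phi({\cal C})$ under $F_1\cap F_2$ for modular pairs $F_1,F_2\in\phi({\cal C})$. Together with the translation of down-closed to up-closed, both axioms of a modular cyclic family in $M$ are equivalent, under $\phi$, to the two axioms of a modular cut in $M^*$, which proves the proposition in both directions. The only slightly delicate point is the modularity preservation in step two; everything else is bookkeeping about the order-reversing bijection and the identity $E\setminus\tcon_M(X)=\cl_{M^*}(E\setminus X)$, which is the key technical fact underpinning the argument.
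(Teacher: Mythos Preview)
Your proof is correct and follows essentially the same approach as the paper's: establish that $X$ is cyclic in $M$ iff $E\setminus X$ is a flat in $M^*$, and that modularity of a pair is preserved under complementation. The only cosmetic difference is that the paper derives the cyclic--flat correspondence from circuit--hyperplane duality (together with ``every flat is an intersection of hyperplanes''), whereas you obtain it from the dual rank identity via $E\setminus {\rm cyc}_M(X)=\cl_{M^*}(E\setminus X)$; your version is more explicit where the paper simply writes ``a direct computation shows''.
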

\begin{proof}
Recall that $X$ is a circuit in $M$ if and only if $E\setminus X$ is a hyperplane in $M^*$.
Since every flat is the intersection of hyperplanes,  $X$ is cyclic in $M$ if and only if $E\setminus X$ is a flat in $M^*$.
In addition, a direct computation shows that  $X, Y$ is a modular pair in $M$ if and only if  $E\setminus X, E\setminus Y$ is a modular pair in $M^*$.
 \end{proof}
 Proposition~\ref{prop:modular_cyclic} and the preceding discussion give  a bijection between   the set of elementary lifts of $M$ and the set of modular cyclic families in $M$, and allow us to define $N$ to be the {\em elementary lift of $M$ with respect to the modular cyclic family $\cal C$ of $M$} if and only if $N^*$ is the {elementary quotient of $M^*$ with respect to the modular cut
 $\{E\setminus C: C\in {\cal C}\}$ of $M^*$.}
 Moreover, we can use (\ref{eq:quotient}), Proposition~\ref{prop:modular_cyclic}, and the fact that ${\rm cyc}_M(X)=E\setminus {\rm cl}_{M^*}(E\setminus X)$ for $X\subseteq E$ to deduce that
 the elementary lift $N$ of $M$ with respect to
the modular cyclic family
 ${\cal C}$ in $M$ has the following rank function:
 \begin{equation}
 \label{eq:lift}
r_N(X)=\begin{cases}
r_M(X) & \mbox{if ${\rm cyc}_M(X)\in {\cal C}$} \\
r_M(X)+1 & \mbox{if ${\rm cyc}_M(X)\not \in {\cal C}$}
\end{cases}
\qquad (X\subseteq E(N)).
\end{equation}

This formula implies that the above mentioned bijection is an (order reversing) isomorphism between the lattice of elementary lifts of $M$, ordered by the weak order of matroids, and the lattice of modular cyclic families in $M$, ordered by inclusion.  (The meet and join of two cyclic families in this lattice are given by  the intersection of the two families, and the smallest modular cyclic family containing their union, respectively.)

\paragraph{Matroid erections.}
Let $M$ and $N$ be matroids on the same ground set $E$ with $r_M(E)=k\leq r_N(E)$.
We say that $M$ is the
{\em truncation of $N$ to rank $k$} if $r_M(X)=\min\{r_N(X), k\}$ for all $X\subseteq E(M)$.
If $k=r_N(E)-1$, $M$ is simply called the {\em truncation} of $N$.
The inverse operation to truncation was used by Crapo~\cite{C70} and Knuth~\cite{K75} to recursively generate all the matroids on a given ground set  from the rank zero matroid on this set. Following Crapo, we say that $N$ is an {\em erection} of $M$ if $M$ is the truncation of $N$.
For a technical reason, $M$ is also considered to be  an erection of itself,  and is referred to as   the {\em trivial} erection.

Crapo~\cite{C70} showed that the set of all erections of a matroid $M$ forms a lattice under the weak order,
where the bottom element corresponds to the trivial erection.
The top element in this lattice is called the {\em free erection} of $M$.
Las Vergnas~\cite{L76} and Nguyen~\cite{N79} independently gave a characterization of the free erection of $M$.  Duke~\cite{D87} subsequently gave a clearer exposition in terms of one-point extensions of the dual matroid $M^*$.
We shall describe Duke's approach in terms of the primal matroid $M$.

Observe first that, if $M$ is a truncation of $N$, then (\ref{eq:quotient}) implies that $M$ is the elementary quotient of $N$ with respect to the modular cut ${\cal  F}=\{E\}$. This in turn implies that $N$ is  a special kind of elementary lift of $M$. Our next result characterizes which elementary lifts are erections.

\begin{theorem} {\rm (A primal version of \cite[Lemma 3.1]{D87}.)}\label{thm:erection}
Let ${\cal C}$ be a modular cyclic family in a matroid $M$ and
$N$ be the elementary lift of $M$ with respect to ${\cal C}$.
Then $N$ is an erection of $M$ if and only if ${\cal C}$ contains all cyclic flats of $M$, with the possible exception of $E$.
\end{theorem}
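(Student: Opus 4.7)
The plan is to match the defining formula for a (single-step) truncation, $r_M(X)=\min\{r_N(X),r_N(E)-1\}$, against the elementary-lift rank formula (\ref{eq:lift}). First I would dispatch the trivial case: if $N=M$, then (\ref{eq:lift}) forces $\mathrm{cyc}_M(X)\in{\cal C}$ for every $X\subseteq E$, so ${\cal C}$ contains every cyclic set of $M$ (in particular every cyclic flat), and both implications are immediate. In what follows I may therefore assume $r_N(E)=r_M(E)+1$, so the erection condition reads $r_M(X)=\min\{r_N(X),r_M(E)\}$ for every $X\subseteq E$.

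For the forward direction, suppose $N$ is a non-trivial erection of $M$ and let $F$ be a cyclic flat of $M$ with $F\ne E$. Then $F$ is a proper flat, so $r_M(F)<r_M(E)$. If $F\notin{\cal C}$, then, since $F=\mathrm{cyc}_M(F)$, formula (\ref{eq:lift}) gives $r_N(F)=r_M(F)+1\le r_M(E)$, whence $\min\{r_N(F),r_M(E)\}=r_M(F)+1\ne r_M(F)$, contradicting the truncation equation. Hence $F\in{\cal C}$.

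For the reverse direction, suppose ${\cal C}$ contains every cyclic flat of $M$ other than $E$. The key step is to upgrade this hypothesis to the statement that every non-spanning cyclic subset of $E$ lies in ${\cal C}$. First check that the closure of a cyclic set is cyclic: if $C$ is cyclic and $e\in\cl_M(C)\setminus C$ then $e$ lies in some circuit contained in $C\cup\{e\}$, so $C\cup\{e\}$ is still a union of circuits; iterating shows $\cl_M(C)$ is cyclic. When $C$ is non-spanning, $\cl_M(C)$ is therefore a proper cyclic flat and hence in ${\cal C}$; since $C\subseteq \cl_M(C)$ and $C$ is cyclic, down-closedness of ${\cal C}$ forces $C\in{\cal C}$. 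Verifying the truncation identity then splits into two cases: if $X$ is non-spanning in $M$, then $\mathrm{cyc}_M(X)$ is a non-spanning cyclic set, hence in ${\cal C}$, giving $r_N(X)=r_M(X)\le r_M(E)$ and $\min\{r_N(X),r_M(E)\}=r_M(X)$; if $X$ spans $M$, then $r_M(X)=r_M(E)$ and (\ref{eq:lift}) forces $r_N(X)\ge r_M(E)$, so $\min\{r_N(X),r_M(E)\}=r_M(E)=r_M(X)$.

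The only real obstacle is the auxiliary observation that the closure of a cyclic set is cyclic; beyond that, everything reduces to reading off both sides of (\ref{eq:lift}) and comparing them with the truncation formula.
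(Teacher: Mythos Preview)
Your proof is correct and follows essentially the same strategy as the paper's, comparing (\ref{eq:lift}) directly against the truncation identity in both directions. The only cosmetic difference is in the reverse implication: the paper observes that $\mathrm{cyc}_M(F)$ is a cyclic flat whenever $F$ is a flat, whereas you establish the dual fact that $\cl_M(C)$ is cyclic whenever $C$ is and then invoke down-closedness of ${\cal C}$---both routes arrive at the same conclusion that $\mathrm{cyc}_M(X)\in{\cal C}$ for every non-spanning $X$.
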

\begin{proof}
We first assume that $N$ is an erection of $M$. Then $M$ is the truncation of $N$. Let $C$ be a  cyclic flat of $M$ with $C\not\in \cal C$.  By (\ref{eq:lift}) we have $r_N(C)=r_M(C)+1$. Since $M$ is the truncation of $N$ this gives $r_M(C)=r_M(E)$. Since $C$ is a flat of $M$, this implies that $C=E$.

We next assume that ${\cal C}$ contains all cyclic flats of $M$, with the possible exception of $E$. Suppose
$F$ is a flat of $M$  with $r_M(F)<r_M(E)$.  Then ${\rm cyc}_M(F)$ is a cyclic flat so ${\rm cyc}_M(F)\in {\cal C}$.
By (\ref{eq:lift}), $r_N(F)=r_M(F)$. It follows that all sets $X$ with $r_N(X)=r_M(X)+1$  have $r_M(X)=r_M(E)$. Hence $M$ is the truncation of $N$.
\end{proof}
We saw above that the lattice of all elementary lifts of $M$ is isomorphic to the lattice of modular cyclic families of $M$. Theorem~\ref{thm:erection} enables us to determine the restriction of this isomorphism to the lattice of erections of $M$. Given a family ${\cal C}_0$ of cyclic sets in $M$, we  define its {\em modular cyclic closure} as:
\[
\overline{{\cal C}}_0=
\bigcap\{ {\cal C}: {\cal C} \text{ is a modular cyclic family of $M$ with } {\cal C}_0\subseteq {\cal C}\}.
\]
Let ${\cal CF}_M$ be the family of all non-spanning cyclic flats of $M$,
and ${\cal C}_M$ be the family of all cyclic sets in $M$.
Then Theorem~\ref{thm:erection} implies that the sublattice $[\overline{{\cal CF}}_M, {\cal C}_M]$ in the lattice of modular cyclic families corresponds to the lattice of erections of $M$, where $\overline{{\cal CF}}_M$ corresponds to the free erection and ${\cal C}_M$ corresponds to the trivial erection. In particular we have:
\begin{corollary}
A matroid $M$ has  no non-trivial erection if and only if $\overline{{\cal CF}}_M={\cal C}_M$, i.e.,
the only modular cyclic family of $M$ containing ${\cal CF}_M$ is the family of all cyclic sets in $M$.
\end{corollary}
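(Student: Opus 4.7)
The plan is to read off the corollary directly from the isomorphism between the lattice of erections of $M$ and the sublattice $[\overline{{\cal CF}}_M, {\cal C}_M]$ of the lattice of modular cyclic families of $M$ that was established in the paragraph preceding the statement. The only work is to confirm the two endpoints of this interval correspond to the free and trivial erections, respectively, and to note that distinct modular cyclic families give rise to distinct erections.

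First I would recall the bijection ${\cal C}\mapsto N_{\cal C}$ between modular cyclic families of $M$ and elementary lifts of $M$, together with the rank formula \eqref{eq:lift}. By Theorem~\ref{thm:erection}, an elementary lift $N_{\cal C}$ is an erection of $M$ precisely when ${\cal C}\supseteq {\cal CF}_M$ (allowing $E$ to be absent), so the erections of $M$ correspond bijectively to the modular cyclic families ${\cal C}$ with ${\cal CF}_M\subseteq {\cal C}\subseteq {\cal C}_M$. The definition of $\overline{{\cal CF}}_M$ as the intersection of all modular cyclic families containing ${\cal CF}_M$ makes it itself a modular cyclic family (the intersection of modular cyclic families is a modular cyclic family, by a direct check on down-closure and modular joins), hence the unique minimum such family. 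Thus the erections of $M$ are in bijection with the interval $[\overline{{\cal CF}}_M,{\cal C}_M]$.

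Second I would identify the endpoints. The trivial erection corresponds to the maximum family ${\cal C}_M$, since in that case every $\cyc_M(X)\in{\cal C}$ and by \eqref{eq:lift} the lift coincides with $M$. Dually, the free erection, being the maximum element in the lattice of erections, corresponds under the order-reversing isomorphism to the minimum element $\overline{{\cal CF}}_M$ of $[\overline{{\cal CF}}_M,{\cal C}_M]$.

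Finally, $M$ has no non-trivial erection if and only if the interval $[\overline{{\cal CF}}_M,{\cal C}_M]$ consists of a single element, equivalently $\overline{{\cal CF}}_M={\cal C}_M$. The only potential obstacle is checking that the bijection between elementary lifts and modular cyclic families is genuinely one-to-one on erections, so that a collapse of the interval really does collapse the set of erections; this follows from formula~\eqref{eq:lift}, which recovers ${\cal C}$ from $N_{\cal C}$ as $\{\cyc_M(X):r_{N_{\cal C}}(X)=r_M(X)\}$. There are no real calculations to do beyond this.
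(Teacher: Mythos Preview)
Your proposal is correct and follows exactly the approach the paper intends: the corollary is stated in the paper with no separate proof, as an immediate consequence of the preceding paragraph establishing the order-reversing isomorphism between the lattice of erections and the interval $[\overline{{\cal CF}}_M,{\cal C}_M]$ of modular cyclic families. Your write-up simply unpacks that one-line deduction, checking the endpoints and injectivity explicitly, which is fine but not strictly necessary.
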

By (\ref{eq:lift}) we also have the following explicit rank formula for the free erection of $M$.
\begin{corollary} \label{cor:free_erection}
Let $N$ be the free erection of $M$.
Then
\begin{equation}
 \label{eq:erection}
r_N(X)=\begin{cases}
r_M(X) & \mbox{ if ${\rm cyc}_M(X)\in \overline{{\cal CF}}_M$} \\
r_M(X)+1 & \mbox{ if ${\rm cyc}_M(X)\not \in \overline{{\cal CF}}_M$}
\end{cases}
\qquad (X\subseteq E).
\end{equation}
\end{corollary}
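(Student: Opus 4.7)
The plan is to derive the rank formula by simply identifying which modular cyclic family corresponds to the free erection, and then plugging that family into the elementary lift rank formula (\ref{eq:lift}). The free erection is, by definition, the top element of the lattice of erections under the weak order of matroids. Under the order-reversing lattice isomorphism between elementary lifts of $M$ and modular cyclic families of $M$ (noted just after (\ref{eq:lift})), this top element must correspond to the bottom element of the sublattice of modular cyclic families that produce erections, rather than arbitrary lifts.

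Theorem~\ref{thm:erection} identifies this sublattice precisely: a modular cyclic family $\mathcal{C}$ gives an erection if and only if $\mathcal{C} \supseteq \mathcal{CF}_M$ (up to the possibly spanning flat $E$, which is irrelevant for the formula (\ref{eq:lift})). Hence the sublattice in question is $[\overline{\mathcal{CF}}_M,\mathcal{C}_M]$, and its bottom element is exactly the modular cyclic closure $\overline{\mathcal{CF}}_M$, as already observed in the paragraph preceding the corollary. So the free erection $N$ is the elementary lift of $M$ with respect to $\overline{\mathcal{CF}}_M$.

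Substituting $\mathcal{C} = \overline{\mathcal{CF}}_M$ into (\ref{eq:lift}) then yields the displayed formula: $r_N(X) = r_M(X)$ exactly when $\mathrm{cyc}_M(X) \in \overline{\mathcal{CF}}_M$, and $r_N(X) = r_M(X) + 1$ otherwise. There is essentially no obstacle here; the entire content of the corollary has been set up by Theorem~\ref{thm:erection}, equation (\ref{eq:lift}), and the definition of $\overline{\mathcal{CF}}_M$, so the proof is a one-line invocation of these facts.
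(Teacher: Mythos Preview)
Your proposal is correct and matches the paper's approach exactly: the paper simply states ``By (\ref{eq:lift}) we also have the following explicit rank formula for the free erection of $M$,'' relying on the immediately preceding paragraph identifying $\overline{\mathcal{CF}}_M$ as the modular cyclic family corresponding to the free erection. Your write-up is a faithful (and slightly more explicit) unpacking of that one-line justification.
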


For a set $S$ of elements in a lattice, let  $S^{\downarrow}$ be the lower closure of $S$.
In order to use Corollary~\ref{cor:free_erection} to determine the rank function for the free erection of $M$, we need to be able to compute $\overline{{\cal CF}}_M$ from ${\cal CF}_M$.
An  algorithm for constructing  the smallest modular cut containing a given set of flats is known (see, e.g.,~\cite[page 367]{D87}).
Dualizing this algorithm, we may use the following procedure  to compute $\overline{{\cal CF}}_M$ from ${\cal CF}_M$.

\medskip

\noindent{\bf Algorithm 1}

$\bullet$ Initialize ${\cal S}_0:={\cal CF}_M$.

$\bullet$ Repeatedly construct ${\cal S}_i$ from ${\cal S}_{i-1}$ by
\\[-3mm]
\[
{\cal S}_i:={\cal S}_{i-1}\cup \{X\cup Y \mid X, Y\in {\cal S}_{i-1}^{\downarrow}:  \text{ $X$ and $Y$ form a modular pair in $M$} \}.
\]
\\[-9mm]

$\bullet$ If ${\cal S}_i={\cal S}_{i-1}$, then ${\cal S}_i^{\downarrow}$ is $\overline{{\cal CF}}_M$.

\medskip


Let $M$ be a matroid on a finite ground set,
and let $M=M_0, M_1, M_2, \dots, M_k$ be a
sequence of matroids starting from $M$ such that
$M_i$ is a non-trivial 
erection of $M_{i-1}$ for all $1\leq i\leq k$ and $M_k$ has no non-trivial erection.
Since each non-trivial  erection increases the rank by one, and the rank is bounded above by $|E|$,
the length of any such sequence is bounded.
We will refer to the last matroid $M_k$ in such a sequence as an {\em elevation of} $M$.
The {\em free elevation of $M$} is the elevation we obtain from $M$ by taking a maximal sequence of non-trivial free erections.

\section{Matroids generated by a set of circuits}\label{sec:c-matroid}
Let ${\cal C}$ be a family of subsets of a finite set $E$.
A matroid $M$ on $E$ is said to be a  {\em ${\cal C}$-matroid} if every member of ${\cal C}$ is a circuit in $M$.
We will be primarily concerned with matroids constructed by a sequence of free erections from a given matroid.
 More specifically, let $M_0$ be a matroid on a finite set $E$ and ${\cal C}_0$ be the family of non-spanning circuits in $M_0$. Then every  matroid obtained from $M_0$ by a sequence of erections is a ${\cal C}_0$-matroid and, in particular, the free elevation of $M_0$ is a ${\cal C}_0$-matroid.
We will derive some properties of the free elevation of $M_0$, which will be crucial tools in the proof of our main theorem.

\subsection{Maximality in the weak order}
Let ${\cal C}$ be a family of subsets of a finite set $E$.
We say that a matroid $M$ is a {\em maximal} ${\cal C}$-matroid if it is maximal in the weak order on the family of all ${\cal C}$-matroids on $E$.


\begin{lemma}\label{lem:unique}
Let $M_0$ be a matroid on $E$ and ${\cal C}_0$ be the family of all non-spanning circuits in $M_0$.
Then the free elevation of $M_0$ is a  maximal ${\cal C}_0$-matroid on $E$.
\end{lemma}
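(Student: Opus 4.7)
The plan is to split the argument into two parts: verifying that the free elevation $M_k$ is itself a ${\cal C}_0$-matroid, and then proving its maximality among ${\cal C}_0$-matroids.

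For the first part, I would note that $M_{i-1}$ is the truncation of $M_i$ by definition of erection, so by the formula $r_{M_{i-1}}(X)=\min\{r_{M_i}(X),r_{M_{i-1}}(E)\}$ every non-spanning circuit $C$ of $M_{i-1}$ (which satisfies $|C|\leq r_{M_{i-1}}(E)$) must have $r_{M_i}(C)=|C|-1$ and $r_{M_i}(C')=|C'|$ for every proper $C'\subsetneq C$. Hence $C$ is again a circuit of $M_i$; iterating, every element of ${\cal C}_0$ remains a circuit of $M_k$.

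For maximality, suppose $N$ is a ${\cal C}_0$-matroid with $M_k\preceq N$. The heart of the plan is to establish the following stronger claim by induction on $i\in\{0,1,\dots,k\}$: every ${\cal C}_0$-matroid $N'$ with $M_i\preceq N'$ and $r_{N'}(E)=r_{M_i}(E)$ must equal $M_i$. The base case $i=0$ is a same-rank argument: since $M_0\preceq N'$, it suffices to show every circuit $C$ of $M_0$ is dependent in $N'$, which holds either because $C\in{\cal C}_0$ (so $C$ is a circuit of $N'$ by hypothesis) or because $C$ is a spanning circuit of $M_0$ with $|C|=r_{M_0}(E)+1=r_{N'}(E)+1$, too large to be independent in $N'$. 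For the inductive step, consider the one-step truncation $N''$ of $N'$, which has rank $r_{M_{i-1}}(E)$; it is still a ${\cal C}_0$-matroid (because $|C|\leq r_{M_0}(E)\leq r_{M_{i-1}}(E)$ for every $C\in{\cal C}_0$), and truncation preserves the weak order, so $M_{i-1}\preceq N''$. By induction $N''=M_{i-1}$, so $N'$ is an erection of $M_{i-1}$; since $M_i$ is the free erection of $M_{i-1}$, every erection of $M_{i-1}$ is $\preceq M_i$, and thus $N'\preceq M_i$, which combined with $M_i\preceq N'$ gives $N'=M_i$.

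To finish, let $k'=r_N(E)-r_{M_0}(E)$, which is at least $k$ since $M_k\preceq N$. Form the chain of successive truncations $N=N^{(0)},N^{(1)},\dots,N^{(k'-k)}$, where $N^{(j)}$ has rank $r_N(E)-j$. Each $N^{(j)}$ is a ${\cal C}_0$-matroid (again because $|C|\leq r_{M_0}(E)\leq r_{N^{(j)}}(E)$ for $C\in{\cal C}_0$), and truncation preserves $\preceq$, so $M_k\preceq N^{(k'-k)}$. Applying the inductive claim at $i=k$ yields $N^{(k'-k)}=M_k$. If $k'>k$ then $N^{(k'-k-1)}$ would be a non-trivial erection of $M_k$, contradicting the defining property of the elevation endpoint. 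Hence $k'=k$ and $N=M_k$, as required.

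The main obstacle is the choice of induction hypothesis: the naive version ``$M_i$ is a maximal ${\cal C}_0$-matroid'' fails because $M_{i-1}$ already has the non-trivial erection $M_i$ inside the class of ${\cal C}_0$-matroids. The fix is to restrict to ${\cal C}_0$-matroids of a fixed rank, so that at each inductive step one can exploit the maximality of the free erection inside the lattice of erections of $M_{i-1}$.
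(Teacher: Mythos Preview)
Your proof is correct and rests on the same three ingredients as the paper's: (i) truncating the candidate matroid $N$ step by step recovers the $M_i$'s, (ii) the free erection $M_i$ is the maximum element in the lattice of erections of $M_{i-1}$, and (iii) the terminal matroid $M_k$ admits no non-trivial erection. The difference is purely organizational. The paper argues by contradiction: it truncates $N$ all the way down to rank $r_{M_0}(E)$, shows this truncation equals $M_0$, and then compares the two erection sequences $(M_i)$ and $(N_i)$ by locating the first index $i$ where they diverge; at that index the maximality of the free erection produces a set independent in $M_i$ but dependent in $N_i$, contradicting $M\prec N$. You instead package the same comparison as an explicit induction on $i$, proving the stronger statement that $M_i$ is the unique ${\cal C}_0$-matroid of its rank lying above it in the weak order, and then truncate $N$ only down to the rank of $M_k$ before invoking the claim. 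Your formulation has the mild advantage of isolating the rank-matching hypothesis cleanly (which is exactly the fix you identify in your final paragraph), and of making the first step---that $M_k$ is a ${\cal C}_0$-matroid---explicit rather than relying on the remark preceding the lemma. The paper's first-divergence argument is a little shorter but slightly terser in its Claim~\ref{claim:unique}. Neither approach offers a real advantage over the other.
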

\begin{proof}
Let $k$ be the rank of $M_0$ and $M$ be the free elevation of $M_0$.
Suppose for a contradiction that $M\prec N$ for some
${\cal C}_0$-matroid $N$ on $E$.
We first prove:
\begin{claim}\label{claim:unique}
The truncation of $N$ to rank $k$ is equal to $M_0$.
\end{claim}
\begin{proof}
Let $N_0$ be the truncation of $N$ to rank $k$. The facts that $N$ is a ${\cal C}_0$-matroid and $M\prec N$  imply that ${\cal C}_0$ is the set of non-spanning circuits of $N_0$. This in turn implies that a set $X\subseteq E$ with $|X|=k$ is dependent in $M_0$ if and only if it is dependent in $N_0$, and hence that $M_0$ and $N_0$ have the same set of bases.
%
%
\end{proof}



Claim~\ref{claim:unique} implies that $M_0$ can be obtained from $N$ by a sequence of truncations, and hence that 
$N$ can be obtained from $M_0$ by a sequence of erections, say
$M_0=N_0, N_1, \dots, N_{\ell}=N$. Let $M_0, M_1, \dots, M_{m}=M$ be the sequence of free erections which construct $M$ from $M_0$.
Since $N\neq M$,  we can choose a smallest possible $i$ such that $N_{i}\neq M_i$. Then $i\geq 1$, $N_j=M_j$ for all $0\leq j\leq i-1$ and $N_i$ is not the free erection of $M_{i-1}$.
Since  $M_i$ is the maximum element in the  lattice of all erections of $M_{i-1}$,   there is a set $X\subseteq E$ that is dependent in $N_{i}$ and independent in  $M_i$.
The set $X$ remains independent in  $M$ but will be dependent in $N$ as $\rank_{N_i}(X)<\rank_{M_i}(X)\leq \rank(M_i) =\rank(N_i)$ and        
$N_{i}$ is obtained from $N$ by truncations.
This contradicts the hypothesis that $M\prec N$.
\end{proof}

Since the free erection of a matroid $M_0$ is the unique maximal element in the lattice of all erections of $M_0$, it is tempting to guess that the free elevation of $M_0$ will be the unique maximal  element in the poset of all matroids we can obtain from $M_0$ by taking sequences of erections.
Sadly
this is not true in general -- counterexamples are given in \cite[Figure 7.9]{B86} and \cite[Theorem 5.4]{JT}.

\subsection{Rank upper bound}
We next obtain an upper bound of the rank of any ${\cal C}$-matroid.
Given a sequence of circuits  $(C_1,\dots, C_k)$ in a matroid $M$, we put
$C_{\leq i}=\bigcup_{j=1}^{i} C_j$ for all $1\leq i\leq k$ and  $C_{\leq 0}=\emptyset$.
The sequence $(C_1,\dots, C_k)$ is said to be {\em proper} if $C_i\not\subseteq C_{\leq i-1}$ for all $2\leq i\leq k$, and it is said to be a {\em ${\cal C}$-sequence} (for a family  of circuits ${\cal C}$) if each $C_i$ belong to ${\cal C}$.
The following lemma is fundamental to our characterization of the rank function of the maximal abstract 3-rigidity matroid.
\begin{lemma}
\label{lem:rank1}
Let $M$ be a matroid
on a finite set $E$ with rank function $r$
and ${\cal C}$ be a family of circuits in $M$.
Then for any $X\subseteq E$ and any proper ${\cal C}$-sequence $(C_1,\dots, C_t)$,
\begin{equation}\label{eq:upperbound}
r(X)\leq |X\cup C_{\leq t}|-t.
\end{equation}
Furthermore, if equality holds in {\rm (\ref{eq:upperbound})}, then $C_{\leq t}\subseteq \cl(X)$ and each $e\in X\sm C_{\leq t}$ is a coloop of $M|_X$.
\end{lemma}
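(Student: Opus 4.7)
The plan is to reduce the statement to a bound on the nullity $n(Y) = |Y| - r(Y)$ of the union $C_{\leq t}$, and then exploit monotonicity of rank. Concretely, I would first show that
\[
n(C_{\leq t}) \;\geq\; t
\]
by induction on $t$. The base case $t=0$ is trivial ($n(\emptyset)=0$) and $t=1$ follows because $C_1$ is a circuit, so $n(C_1)=1$. For the inductive step, I would use the supermodularity of $n$ (which is immediate from submodularity of $r$): applied to $A=C_{\leq t-1}$ and $B=C_t$, this gives
\[
n(C_{\leq t}) \;\geq\; n(C_{\leq t-1}) + n(C_t) - n(C_{\leq t-1}\cap C_t).
\]
By induction $n(C_{\leq t-1})\geq t-1$ and $n(C_t)=1$. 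The key use of properness is that $C_t\not\subseteq C_{\leq t-1}$, so $C_{\leq t-1}\cap C_t$ is a \emph{proper} subset of the circuit $C_t$ and is therefore independent, giving $n(C_{\leq t-1}\cap C_t)=0$. Adding up yields $n(C_{\leq t})\geq t$.

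Next I would deduce the inequality \eqref{eq:upperbound} in two lines: nullity is monotone (if $Y\subseteq Y'$ then $n(Y)\leq n(Y')$, since submodularity gives $r(Y')\leq r(Y)+|Y'\setminus Y|$), hence
\[
|X\cup C_{\leq t}| - r(X\cup C_{\leq t}) \;=\; n(X\cup C_{\leq t}) \;\geq\; n(C_{\leq t}) \;\geq\; t,
\]
and then $r(X)\leq r(X\cup C_{\leq t})\leq |X\cup C_{\leq t}|-t$ by monotonicity of $r$.

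For the equality clause, suppose $r(X)=|X\cup C_{\leq t}|-t$. Both inequalities in the chain $r(X)\leq r(X\cup C_{\leq t})\leq |X\cup C_{\leq t}|-t$ must be tight. The first tight inequality $r(X)=r(X\cup C_{\leq t})$ says exactly that $X\cup C_{\leq t}\subseteq \cl(X)$, so in particular $C_{\leq t}\subseteq \cl(X)$. For the coloop claim, fix any $e\in X\setminus C_{\leq t}$. The already-proved inequality \eqref{eq:upperbound}, applied to $X\setminus\{e\}$ together with the same proper ${\cal C}$-sequence, gives
\[
r(X\setminus\{e\}) \;\leq\; |(X\setminus\{e\})\cup C_{\leq t}| - t \;=\; |X\cup C_{\leq t}|-1-t \;=\; r(X)-1,
\]
using $e\notin C_{\leq t}$ in the middle equality. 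Since removing a single element drops the rank by at most $1$, equality holds and $e$ is a coloop of $M|_X$, completing the proof.

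I do not foresee a substantive obstacle: the only delicate point is verifying that the properness hypothesis $C_t\not\subseteq C_{\leq t-1}$ is exactly what is needed to force $n(C_{\leq t-1}\cap C_t)=0$ in the inductive step. Everything else is bookkeeping with the submodular/supermodular identities and the monotonicity of $r$ and $n$.
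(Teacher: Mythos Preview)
Your proof is correct and follows essentially the same approach as the paper: both arguments establish $r(C_{\leq t})\leq |C_{\leq t}|-t$ by induction using submodularity and the fact that properness forces $C_{\leq t-1}\cap C_t$ to be independent, then deduce the bound for general $X$ and handle the equality clause by reapplying the inequality to $X-e$. Your phrasing in terms of nullity and the use of $r(X)\leq r(X\cup C_{\leq t})$ (with the closure statement read off directly from tightness of this inequality) is a mild streamlining of the paper's version, which instead bounds $r(X)$ by $r(X\setminus C_{\leq t})+r(C_{\leq t})$ and obtains $C_{\leq t}\subseteq\cl(X)$ by reapplying the bound to $X+e$ for each $e\in C_{\leq t}$.
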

\begin{proof}

We  first use  induction on $j$ to show that, for all $1\leq j\leq t$,
\begin{equation}
\label{eq:rank11}
r(C_{\leq j})\leq \sum_{i=1}^j (|C_i\setminus C_{\leq i-1}|-1).
\end{equation}

The base case when $j=1$ holds since $C_1$ is a circuit.

Suppose $j>1$. As the sequence is proper,
$C_j\cap C_{\leq j-1}$ is a proper subset of $C_j$, which is independent.
Hence its rank is equal to its cardinality.
Thus
\begin{align*}
r(C_{\leq j})&\leq r(C_{\leq j-1})+r(C_j)-r(C_{\leq j-1}\cap C_j)  \qquad (\text{by submodularity}) \\
&= r(C_{\leq j-1})+|C_j|-1-|C_{\leq j-1}\cap C_j|  \\
&\leq \sum_{i=1}^j (|C_i\setminus C_{\leq i-1}|-1) \qquad (\text{by induction})
\end{align*}
and (\ref{eq:rank11}) holds.

Putting $j=t$ in (\ref{eq:rank11}) gives $r(C_{\leq t})\leq |C_{\leq t}|-t$.
We can now use submodularity and the monotonicity of $r$ to deduce that
\[
r(X)\leq r(X\setminus C_{\leq t})+r(C_{\leq t})
\leq |X\setminus C_{\leq t}|+|C_{\leq t}|-t=|X\cup C_{\leq t}|-t.
\]
This completes the proof of the first part of the lemma.

To prove the second part, we assume that $r(X)=|X\cup C_{\leq t}|-t$ for some $X\subseteq E$ and some proper $\cal C$-sequence $(C_1,\ldots,C_t)$.
If $e\in C_{\leq t}$
then, by the first part of the lemma,
$r(X+e)\leq |(X+e)\cup C_{\leq t}|-t=|X\cup C_{\leq t}|-t=r(X)$,
and hence  $e\in \cl(X)$. Similarly,
if $e\in X\setminus C_{\leq t}$
then, by the first part of the lemma,
$r(X-e)\leq  |(X-e)\cup C_{\leq t}|-t= |X\cup C_{\leq t}|-t-1=r(X)-1$,
and hence  $e$ is a coloop of
$M|_X$.
\end{proof}

Let $M_0$ be a matroid on $E$ and
${\cal C}_0$ be the family of non-spanning circuits in $M_0$.
Then, for any ${\cal C}_0$-matroid $M$ on $E$,  Lemma~\ref{lem:rank1} implies that the function $f_{{\cal C}_0}: 2^E\rightarrow \mathbb{Z}$ defined by
\[
f_{{\cal C}_0}(X)=\min\{|X\cup C_{\leq t}|-t : \text{$(C_1,\dots, C_t)$ is a proper ${\cal C}_0$-sequence in $M$}\} \qquad (X\subseteq E)
\]
gives an upper-bound for the rank function of $M$.
 It follows that, if $f_{{\cal C}_0}$ is the rank function of some matroid on $E$,
 then this  matroid will be the unique maximal ${\cal C}_0$-matroid and hence will be the free elevation of $M_0$  by Lemma \ref{lem:unique}.
Since the poset of all elevations of $M_0$ may not have a unique maximal element,
%
$f_{{\cal C}_0}$ is not always a matroid rank function.
We believe that $f_{{\cal C}_0}$ is  a matroid rank function  whenever
there is a unique maximal ${\cal C}_0$-matroid on $E$.

\begin{conjecture}\label{conj:rank}
Let $M_0$ be a matroid on a finite set $E$ and let ${\cal C}_0$ be the family of non-spanning circuits in $M_0$.
Suppose that there is a unique maximal ${\cal C}_0$-matroid on $E$.
Then $f_{{\cal C}_0}$ is the rank function of this maximal ${\cal C}_0$-matroid.
\end{conjecture}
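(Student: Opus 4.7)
The plan is to reduce the conjecture to showing that $f_{\mathcal{C}_0}$ satisfies the axioms of a matroid rank function. Let $M$ denote the unique maximal $\mathcal{C}_0$-matroid guaranteed by hypothesis (which exists as, e.g., the free elevation of $M_0$ by Lemma~\ref{lem:unique}). Lemma~\ref{lem:rank1} already gives $r_M(X)\leq f_{\mathcal{C}_0}(X)$ for every $X\subseteq E$. Suppose we have established that $f_{\mathcal{C}_0}$ is the rank function of some matroid $M'$ on $E$. For each $C\in\mathcal{C}_0$ the length-one sequence $(C)$ is a proper $\mathcal{C}_0$-sequence, yielding $r_{M'}(C)=f_{\mathcal{C}_0}(C)\leq|C|-1$, so $C$ is dependent in $M'$; for every proper subset $C'\subsetneq C$, Lemma~\ref{lem:rank1} forces $r_{M'}(C')=f_{\mathcal{C}_0}(C')\geq r_M(C')=|C'|$, so $C'$ is independent. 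Hence $C$ is a circuit of $M'$, and $M'$ is a $\mathcal{C}_0$-matroid. Lemma~\ref{lem:rank1} again yields $M\preceq M'$, so $M'$ is a maximal $\mathcal{C}_0$-matroid, and the uniqueness hypothesis forces $M=M'$, completing the proof.

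The entire difficulty, therefore, is to verify that $f_{\mathcal{C}_0}$ is a matroid rank function. Normalisation (via the empty sequence), monotonicity, and the unit-increase axiom $f_{\mathcal{C}_0}(X+e)\leq f_{\mathcal{C}_0}(X)+1$ follow quickly from the definition, leaving submodularity
\[
f_{\mathcal{C}_0}(X)+f_{\mathcal{C}_0}(Y)\geq f_{\mathcal{C}_0}(X\cup Y)+f_{\mathcal{C}_0}(X\cap Y)
\]
as the crux. Given proper $\mathcal{C}_0$-sequences $(C_1,\dots,C_s)$ and $(D_1,\dots,D_t)$ attaining the minima for $X$ and $Y$, one would like to partition and recombine them into proper $\mathcal{C}_0$-sequences witnessing $f_{\mathcal{C}_0}(X\cup Y)$ and $f_{\mathcal{C}_0}(X\cap Y)$ whose total length is at most $s+t$ and whose total cover is $|X\cup Y\cup C_{\leq s}\cup D_{\leq t}|+|(X\cap Y)\cup \text{core}|$. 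The combinatorial obstacle is that a circuit $C_i$ needed to rank-reduce $X$ may lie almost entirely outside $X\cap Y$: keeping it in a sequence for $X\cap Y$ enlarges the cover too expensively, while dropping it may undercount the rank deficit.

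My preferred route is contrapositive: assume $f_{\mathcal{C}_0}$ fails submodularity at some pair $(X,Y)$ and use the failure to manufacture two incomparable maximal $\mathcal{C}_0$-matroids, contradicting uniqueness. I would pick a witness pair minimising $|X|+|Y|$, then run two parallel sequences of free erections from $M_0$ in which, at the first step where the modular cyclic closure $\overline{\mathcal{CF}}$ admits room for choice, the two sequences make divergent selections dictated respectively by the flats spanned by $X\cap Y$ and $X\cup Y$ in the intermediate matroid. Lemma~\ref{lem:unique} promotes each resulting elevation to a maximal $\mathcal{C}_0$-matroid, and the submodularity defect should translate into an incomparability between their rank functions. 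The hardest part I anticipate is making this translation precise: one needs an operation on modular cyclic families, analogous to matroid union or intersection, that converts a submodularity witness into a concrete divergent pair of erection steps, and I expect any successful proof of the conjecture in its stated generality to hinge on finding such an operation (perhaps informed by the arguments used to settle the $C_2^1$-cofactor case in Theorem~\ref{thm:rank}).
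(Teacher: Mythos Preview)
The statement you are attempting to prove is \emph{Conjecture}~\ref{conj:rank}: the paper does not prove it in general. The authors only establish the special case where $M_0$ is the rank~$10$ truncation of any $K_5$-matroid on $E(K_n)$ (Theorem~\ref{thm:rank}), and that proof relies on features peculiar to $\mathcal{C}^1_{2,n}$ (the $X$-replacement property, Corollary~\ref{cor:maximal}, and the simplicial-vertex argument of Theorem~\ref{thm:rank}(b)) rather than on any general mechanism of the sort you outline. So there is no ``paper's own proof'' to compare against.

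Your first paragraph is correct and is essentially the observation the paper makes immediately before stating the conjecture: if $f_{\mathcal{C}_0}$ happens to be a matroid rank function, then the corresponding matroid must be the unique maximal $\mathcal{C}_0$-matroid. This is the easy direction and is not the content of the conjecture.

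The remainder is not a proof but a plan with an explicitly acknowledged gap, and the plan has a concrete defect. You propose to ``run two parallel sequences of free erections'' that ``make divergent selections'' at some step. But the free erection of a matroid is \emph{uniquely} determined (it is the top element of the lattice of erections), so two sequences of free erections from $M_0$ are identical; there is no room for divergent choices. If instead you mean two sequences of non-free erections, then Lemma~\ref{lem:unique} no longer applies: that lemma shows only that the \emph{free} elevation is maximal, not that an arbitrary elevation is, so you cannot conclude that your two constructed matroids are both maximal $\mathcal{C}_0$-matroids. Without that, the contradiction with the uniqueness hypothesis evaporates. The heuristic link you posit between a submodularity failure of $f_{\mathcal{C}_0}$ and a branching in the erection process is exactly the missing idea, and at present there is no indication (in the paper or elsewhere) that such a link exists in general.
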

Our main result verifies Conjecture~\ref{conj:rank} when $M_0$ is the rank 10 matroid on $E(K_n)$ in which the set of  non-spanning  circuits ${\cal C}_0$ is the set of copies of $K_5$ in $K_n$. (We already showed in \cite{CJT1} that the cofactor matroid $\mathcal{C}^1_2(K_n)$ is the  
unique maximal $K_5$-matroid 
on $E(K_n)$ and Theorem \ref{thm:rank} below verifies that  $f_{{\cal C}_0}$ is its rank function.) The conjecture is verified for several other matroids on $E(K_n)$ in \cite{JT}.

\subsection{A covering lemma}
We will use the algorithm for constructing a free elevation  given in Section~\ref{sec:erections} to show that every cyclic flat in the free elevation of a matroid $M_0$ can be covered by the non-spanning circuits of $M_0$ (Lemma~\ref{lem:covering} below). This will be a key tool in proving that Conjecture \ref{conj:rank} holds for the maximal $K_5$-matroid on $E(K_n)$. 


Recall that ${\cal CF}_M$ denotes the family of non-spanning cyclic flats in a matroid $M$, and that
$\overline{\cal CF}_M$ denotes its modular cyclic closure.
We will need the following characterization of the cyclic hyperplanes in 
the free erection of $M$.

\begin{lemma}
\label{lem:hyperplane}
Suppose $M$ is a matroid on a finite set $E$, $N$ is the free erection of $M$, and  $N\neq M$. Let $Z\subseteq E$.
Then
$Z$ is a cyclic hyperplane in $N$ if and only if $Z$ is a spanning set in $M$ and a maximal element in $\overline{\cal CF}_M$.
\end{lemma}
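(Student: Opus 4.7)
The plan is to work directly from the rank formula for $N$ in Corollary~\ref{cor:free_erection}, which gives $r_N(X) = r_M(X)$ when ${\rm cyc}_M(X) \in \overline{\cal CF}_M$ and $r_N(X) = r_M(X) + 1$ otherwise. Write $k = r_M(E) = r_N(E) - 1$. Being a cyclic hyperplane of $N$ amounts to $r_N(Z) = k$ together with $Z = \cl_N(Z)$ and the absence of coloops in $N|_Z$, and both directions will come from comparing the rank formula applied to $Z$, to $Z+e$ for $e \in E \setminus Z$, and to $Z - e$ for $e \in Z$.

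For the forward direction, the first step is to show $Z$ is cyclic in $M$. The key observation is that if some $e \in Z$ were a coloop of $M|_Z$, then the circuits of $M|_Z$ and of $M|_{Z-e}$ would coincide, giving ${\rm cyc}_M(Z) = {\rm cyc}_M(Z - e)$; plugging into (\ref{eq:erection}) would then force $r_N(Z) - r_N(Z-e) = r_M(Z) - r_M(Z-e) = 1$, contradicting the absence of coloops in $N|_Z$. Next I would rule out $r_M(Z) = k - 1$: in that case (\ref{eq:erection}) gives $Z = {\rm cyc}_M(Z) \notin \overline{\cal CF}_M$, but the $N$-flat condition forces $r_M(Z + e) = k > r_M(Z)$ for every $e \in E \setminus Z$, so $Z$ is also a flat of $M$ and hence a non-spanning cyclic flat of $M$; this contradicts ${\cal CF}_M \subseteq \overline{\cal CF}_M$. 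Therefore $r_M(Z) = k$ and $Z \in \overline{\cal CF}_M$. Maximality of $Z$ in $\overline{\cal CF}_M$ is then immediate: a strictly larger $Z' \in \overline{\cal CF}_M$ would satisfy $r_N(Z') = r_M(Z') \leq k$, contradicting $r_N(Z + e) = k + 1$ for any $e \in Z' \setminus Z$.

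For the backward direction, suppose $Z \in \overline{\cal CF}_M$ is spanning in $M$ and maximal in $\overline{\cal CF}_M$. Then $r_N(Z) = r_M(Z) = k$ by the rank formula. To check that $Z$ is a flat of $N$, I would consider each $e \in E \setminus Z$: monotonicity of ${\rm cyc}_M$ together with $Z$ being cyclic in $M$ leaves only ${\rm cyc}_M(Z + e) \in \{Z,\, Z + e\}$, and the option $Z$ would make $e$ a coloop of $M|_{Z + e}$ and force $r_M(Z + e) = k + 1$, impossible since $r_M(E) = k$. So ${\rm cyc}_M(Z + e) = Z + e$, which by maximality of $Z$ is not in $\overline{\cal CF}_M$, yielding $r_N(Z + e) = k + 1$. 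To check that $Z$ is cyclic in $N$, I would use that $Z$ cyclic in $M$ gives $r_M(Z - e) = k$ for each $e \in Z$, and this combined with $r_N(Z - e) \leq r_N(Z) = k$ forces $r_N(Z - e) = k$, so $e$ is not a coloop of $N|_Z$.

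The main obstacle will be the first step of the forward direction. A priori the free erection could create new dependencies so that some $e \in Z$ is a coloop of $M|_Z$ but not of $N|_Z$, in which case one would not be able to conclude ${\rm cyc}_M(Z) = Z$. What unlocks this is the observation that removing a coloop leaves the set of circuits (and hence the cyclic part, and hence the indicator in (\ref{eq:erection})) unchanged, which synchronizes the $N$-rank drop from $Z$ to $Z - e$ with the $M$-rank drop and produces the contradiction.
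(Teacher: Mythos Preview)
Your proof is correct and follows the same overall approach as the paper, working through the rank formula~(\ref{eq:erection}). The backward direction is essentially identical (the paper shows $Z+e$ is cyclic in $M$ by picking a base $I\subseteq Z$ and noting that $I+e$ contains a circuit through $e$, whereas you reach the same conclusion by the rank/coloop dichotomy; both are fine).

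The one place where the paper is more efficient is the forward direction. You first prove $Z$ is cyclic in $M$ via your coloop argument and then separately rule out the case $r_M(Z)=k-1$. The paper bypasses both steps by invoking the truncation relationship directly: since $M$ is the truncation of $N$ and $Z$ is non-spanning in $N$, one has $M|_Z=N|_Z$ (so $Z$ cyclic in $N$ forces $Z$ cyclic in $M$) and $r_M(Z)=\min\{r_N(Z),k\}=k$ immediately. In particular, the case $r_M(Z)=k-1$ that you carefully exclude never actually arises. Your route is perfectly valid, just a little longer; the paper's shortcut is worth knowing since it shows your ``main obstacle'' dissolves once you remember that $N$ erects $M$.
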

\begin{proof}
Suppose that $Z$ is spanning in $M$
and  a maximal element in $\overline{\cal CF}_M$.
Then $Z$ is cyclic in both $M$ and $N$, and
$r_M(Z)=r_N(Z)=r_N(E)-1$ by (\ref{eq:erection}).
It remains  to show that $Z$ is a flat in $N$.
To see this, choose a base $I$ of $M$ with  $I\subseteq Z$.
Then for any $e\notin Z$, $I+e$ contains a circuit $C$ of $M$ with $e\in C$.
As $Z\cup C=Z+e$, $Z+e$ is cyclic in $M$, and the maximality of $Z$ in $\overline{\cal CF}_M$ now gives $Z+e \notin \overline{\cal CF}_M$.
Hence $r_N(Z+e)>r_N(Z)$ for all $e\notin Z$, which means that $Z$ is a flat in $N$.

Conversely, suppose that $Z$ is a cyclic hyperplane in $N$.
Then $Z$ is a non-spanning cyclic set in $N$ so $Z\in \overline{\cal CF}_M$. 
By the definition of truncation,
$Z$ is a spanning set in $M$. 
If $Z$ is not maximal in $\overline{\cal CF}_M$, then there is a maximal set $Y\in \overline{\cal CF}_M$ with $Z\subsetneq Y$.
Then by the first part of the proof, $Y$ would be a hyperplane in $N$, which is a contradiction since $Z\subsetneq Y$.
\end{proof}

Our covering lemma for free elevations will follow by recursively applying our next result at each step in the sequence of free erections used to construct a free elevation.
 
\begin{lemma}\label{lem:coveringnew}
Let $M$ be a matroid on a finite set $E$, ${\cal C}$ be a family of  non-spanning circuits of $M$, and $N$ be the free erection of  $M$.
Suppose that each cyclic flat in $M$ is the union of circuits in ${\cal C}$.
Then each cyclic flat in $N$ is the union of circuits in ${\cal C}$.
\end{lemma}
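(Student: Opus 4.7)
The plan is to classify the cyclic flats $F$ of $N$ according to their $N$-rank. Let $k=r_M(E)$. If $N=M$ there is nothing to prove, so I assume $N\neq M$, i.e.\ $r_N(E)=k+1$.

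\medskip
\noindent\textbf{Case 1: $r_N(F)\le k-1$.} Using that $M$ is a truncation of $N$, we have $r_M(F)=r_N(F)$ and, for every $e\notin F$, $r_M(F+e)=\min\{r_N(F+e),k\}=r_N(F+e)>r_N(F)=r_M(F)$, so $F$ is a flat of $M$; similarly, for $e\in F$, $r_M(F-e)=r_N(F-e)=r_N(F)=r_M(F)$ (as $F$ is cyclic in $N$), so $F$ is cyclic in $M$. Thus $F$ is a cyclic flat of $M$ and the hypothesis applies.

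\medskip
\noindent\textbf{Case 2: $F=E$.} I will show $M$ has no coloops. If $e$ were a coloop of $M$, then the coloops of $M|_{E-e}$ are exactly the coloops of $M$ other than $e$ (since $e$ lies in no circuit of $M$), so $\mathrm{cyc}_M(E-e)=E\setminus\{\mathrm{coloops\ of}\ M\}$, which is a non-spanning cyclic flat of $M$ and hence lies in $\overline{\mathcal{CF}}_M$. Formula \eqref{eq:erection} then gives $r_N(E-e)=r_M(E-e)=k-1$, while $r_N(E)=k+1$, violating submodularity. So $M$ has no coloops, $E$ is a cyclic flat of $M$, and the hypothesis applies.

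\medskip
\noindent\textbf{Case 3: $r_N(F)=k$ and $F\neq E$.} By Lemma~\ref{lem:hyperplane}, $F$ is a spanning subset of $M$ and a maximal element of $\overline{\mathcal{CF}}_M$. For each $e\in F$ I want to exhibit a $\mathcal{C}$-circuit containing $e$ and contained in $F$. The cornerstone observation is: since $F$ is cyclic in $M$, I can pick an $M$-circuit $C_0\subseteq F$ through $e$; if $C_0$ is non-spanning in $M$, then $\mathrm{cl}_M(C_0)\in\mathcal{CF}_M\subseteq\overline{\mathcal{CF}}_M$, and the containment $C_0\subseteq F$ forces
\[
r_M(F\cap\mathrm{cl}_M(C_0))\ge r_M(C_0)=|C_0|-1=r_M(\mathrm{cl}_M(C_0)),
\]
so the pair $(F,\mathrm{cl}_M(C_0))$ is modular in $M$. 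Then $F\cup\mathrm{cl}_M(C_0)\in\overline{\mathcal{CF}}_M$, and maximality of $F$ forces $\mathrm{cl}_M(C_0)\subseteq F$. Applying the hypothesis to the cyclic flat $\mathrm{cl}_M(C_0)$ now yields a $\mathcal{C}$-circuit through $e$ inside $F$.

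\medskip
\noindent\textbf{Main obstacle.} The delicate point is to handle elements $e\in F$ all of whose $M$-circuits in $F$ are spanning. My plan is to first rule out the possibility that $F$ is itself a single spanning $M$-circuit: the algorithm constructing $\overline{\mathcal{CF}}_M$ produces maximal elements only as modular unions of strictly smaller cyclic sets, but a circuit has only $\emptyset$ and itself as cyclic subsets, so it cannot arise as a proper maximal element. Hence $|F|>k+1$. Then, given an arbitrary $\mathcal{C}$-circuit $C\ni e$ in $M$ (which exists because, by Case 2's analysis and the hypothesis, every element of $E$ lies in some $\mathcal{C}$-circuit), I will combine $C$ with a spanning fundamental circuit $B+e\subseteq F$ where $B$ is a base of $M|_{F-e}$ extending $F\cap(C-e)$, and use a strong circuit-elimination argument together with a repeated application of the modular pair trick above (with $\mathrm{cl}_M(C)$ in place of $\mathrm{cl}_M(C_0)$) to locate a non-spanning $M$-circuit through $e$ contained in $F$, reducing to the already-handled case.
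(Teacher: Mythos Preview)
Your Cases~1 and~2 and the first subcase of Case~3 (when some non-spanning $M$-circuit through $e$ lies in $F$) are correct; the modular-pair trick there is exactly right.

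The gap is the ``Main obstacle''. Your proposed combination of strong circuit elimination and the modular-pair trick does not produce a non-spanning circuit through $e$ inside $F$. Eliminating an $f\in C\setminus F$ from the pair $(C,C_1)$ yields a circuit $C_2\ni e$ with $C_2\setminus F\subsetneq C\setminus F$; iterating against $C_1$ eventually gives some $C_m\ni e$ with $C_m\subseteq F$, but nothing controls the rank of $C_m$, so it may well be spanning --- exactly the case you are trying to escape. The modular-pair trick with ${\rm cl}_M(C)$ is no help either: it requires $r_M(F\cap{\rm cl}_M(C))=|C|-1$, i.e.\ that $F$ already contains a base of ${\rm cl}_M(C)$, and you only know $e\in F\cap C$.

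You do, however, already hold the key idea. Your argument that $F$ cannot be a single spanning circuit is an induction on the stage $j$ of Algorithm~1 at which $F$ first appears in ${\cal S}_j$, using that a maximal $F\notin{\cal CF}_M$ must decompose as a modular union $X\cup Y$ of cyclic sets from an earlier stage. The paper runs this same induction in full generality (Claim~\ref{claim:cover}): for every $Z\in\overline{\cal CF}_M$ there is $Z'\in\overline{\cal CF}_M$ with $Z\subseteq Z'$, $r_M(Z)=r_M(Z')$, and $Z'$ a union of ${\cal C}$-circuits. The inductive step writes $Z=A\cup B$ as a modular union from stage $j-1$, replaces $A,B$ by the inductively-obtained $A',B'$, and verifies that $A',B'$ remain a modular pair with $r_M(A'\cup B')=r_M(Z)$. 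For a maximal $F$ this forces $F=F'$ directly, with no need to chase individual elements. So abandon the circuit-elimination detour and instead push the Algorithm~1 induction you already started.
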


\begin{proof}
The theorem is trivially true if $M=N$, so we may assume that $M\neq N$. This implies that some spanning circuit of $M$ becomes a base of $N$. Hence $M$ has no coloops,  $E$ is a cyclic flat of $M$ and, by hypothesis, each $e\in E$ belongs to a circuit in $\cal C$. 
%
We first prove the following.

\begin{claim}
\label{claim:cover}
For each $Z\in \overline{\cal CF}_M$, there is a $Z'\in \overline{\cal CF}_M$ such that $Z\subseteq Z'$, $r_{M}(Z)=r_{M}(Z')$, and $Z'$ is the union of circuits in  ${\cal C}$.
\end{claim}

\begin{proof}
Let ${\cal S}_0, \dots, {\cal S}_k$ be the families of cyclic sets defined in the construction of $\overline{\cal CF}_M$ from  ${\cal CF}_M$ in Algorithm 1. 
Since $Z\in \overline{\cal CF}_M$, $Z\in {\cal S}_j^{\downarrow}$ for some $0\leq j\leq k$. We prove that the claim holds for $Z$ by induction on $j$.

For the  base case, we assume that $Z\in {\cal S}_0^{\downarrow}$.
If $Z\in {\cal S}_0={\cal CF}_M$, then
$Z$ is a cyclic flat of $M$ and hence
is the union of circuits in ${\cal C}$ by our hypothesis on $M$. On the other hand, if $Z\in {\cal S}_0^{\downarrow}\setminus {\cal S}_0$, then 
$\cl_{M}(Z)\in {\cal S}_0$ 
and $Z'=\cl_{M}(Z)$ is the desired set for $Z$.

Suppose $Z\in {\cal S}_j^{\downarrow}\sm {\cal S}_{j-1}^{\downarrow}$ for some $j\geq 1$. 
We first consider the case when $Z\in {\cal S}_j$.
Then, by Algorithm 1, $Z=A\cup B$ for some $A, B\in {\cal S}_{j-1}^{\downarrow}$ which form a modular pair in $M$.
By induction on $j$, there exist $A', B'\in \overline{\cal CF}_M$ with $A\subseteq A'$, $B\subseteq B'$,
$r_{M}(A)=r_{M}(A')$ and
$r_{M}(B)=r_{M}(B')$, such that $A'$ and $B'$ are both the union of sets in  ${\cal C}$.
We claim that $A'$ and $B'$ form a modular pair in $M$. This follows since
\begin{align*}
r_{M}(A)+r_{M}(B)&=r_{M}(A')+r_{M}(B') & (\text{by $r_M(A)=r_M(A')$ and $r_M(B)=r_M(B')$})\\
&\geq r_{M}(A'\cup B')+r_{M}(A'\cap B') & (\text{by submodularity}) \\
&\geq r_{M}(A\cup B)+r_{M}(A\cap B)  & (\text{by $A\subseteq A', B\subseteq B'$})\\
&=r_{M}(A)+r_{M}(B) & (\text{by modularity})
\end{align*}
%
and hence equality must hold in each inequality.
Since $\overline{\cal CF}_M$ is closed under the union of modular pairs, this gives $A'\cup B'\in \overline{\cal CF}_M$.
The fact that equality holds throughout the above inequality also implies that $r_{M}(A'\cup B')+r_{M}(A'\cap B')= r_{M}(A\cup B)+r_{M}(A\cap B)$.
This and the monotonicity of $r_{M}$ imply that $r_{M}(A'\cup B')=r_{M}(A\cup B)=r_{M}(Z)$.
Thus $Z'=A'\cup B'$ is the desired set for $Z$.

It remains to consider the case when $Z\in {\cal S}_j^{\downarrow}\setminus ({\cal S}_j\cup {\cal S}_{j-1}^{\downarrow})$. Then ${\rm cl}_M(Z)\not\in {\cal CF}_M$ so 
$Z$ is a spanning set in $M$. 
Choose a maximal element $\tilde Z$ of ${\cal S}_j^{\downarrow}$ with $Z\subseteq \tilde Z$.
Then  $\tilde Z\in {\cal S}_j$, and hence there is a set $\tilde Z'$ for $\tilde Z$ by the preceding paragraph.
Then $r_M(\tilde Z')=r_M(Z)$ since $Z$ is spanning in $M$ and $Z'=\tilde Z'$ is the desired set for $Z$.
This completes the proof of the claim.
\end{proof}

The claim implies,  in particular,  that every maximal element in $\overline{\cal CF}_M$ is the union of circuits in ${\cal C}$.

We can now  complete the proof of the lemma.
Choose a cyclic flat $Z$ of $N$.
If $Z$ is spanning in $N$, then $Z=E$ and the lemma follows  since each element in $E$ is contained in a circuit in ${\cal C}$.
So we may assume that $Z$ is not spanning in $N$.
Then $N|_Z = M|_Z$ follows as $M$ is the truncation of $N$.
As $Z$ is cyclic in $N$, $Z$ is also cyclic in $M$.
If $r_{N}(Z)<r_{M}(E)$, then $Z$ is a non-spanning cyclic flat in $M$ and the claim follows from the hypothesis on $M$. So we may further assume that $r_{N}(Z)=r_{M}(E)=r_{N}(E)-1$.
Then $Z$  is a hyperplane in $N$.
Lemma~\ref{lem:hyperplane} now implies that $Z$ is a maximal element in $\overline{\cal CF}_M$, and hence 
 $Z$ is the union of circuits in ${\cal C}$ by  Claim~\ref{claim:cover}.
This completes the proof of the lemma.
\end{proof}

Lemma \ref{lem:coveringnew} and the fact that non-spanning circuits are preserved when we construct a free erection immediately give:

\begin{lemma}\label{lem:covering}
Let $M$ be a matroid on a finite set $E$, ${\cal C}$ be a family of  non-spanning circuits of $M$, and $N$ be the free elevation of  $M$.
Suppose that each cyclic flat in $M$ is the union of circuits in ${\cal C}$.
Then each cyclic flat in $N$ is the union of circuits in ${\cal C}$.
\end{lemma}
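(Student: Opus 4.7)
The plan is to iterate Lemma~\ref{lem:coveringnew} along the sequence of free erections that defines the free elevation. By definition, there is a sequence $M = M_0, M_1, \ldots, M_k = N$ in which $M_i$ is the free erection of $M_{i-1}$ for $1 \leq i \leq k$. I will prove by induction on $i$ that $\mathcal{C}$ is still a family of non-spanning circuits of $M_i$ and every cyclic flat of $M_i$ is a union of circuits in $\mathcal{C}$. The base case $i=0$ is exactly the hypothesis of the lemma.

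For the inductive step, I first need the preservation statement: if $C$ is a non-spanning circuit of $M_{i-1}$, then $C$ is also a non-spanning circuit of $M_i$. The key observation is that $M_{i-1}$ is the truncation of $M_i$, so for any $X \subseteq E$ we have $r_{M_{i-1}}(X) = \min\{r_{M_i}(X), r(M_{i-1})\}$. If $C$ is a non-spanning circuit of $M_{i-1}$, then $|C| - 1 = r_{M_{i-1}}(C) < r(M_{i-1})$, so the truncation is inactive on $C$ and on every proper subset of $C$, giving $r_{M_i}(C) = |C|-1$ and $r_{M_i}(C - x) = |C|-1$ for each $x \in C$; hence $C$ is a circuit of $M_i$. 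Moreover $|C| - 1 < r(M_{i-1}) < r(M_i)$, so $C$ is non-spanning in $M_i$ as well. Applied iteratively, this shows that every member of $\mathcal{C}$ is a non-spanning circuit of $M_i$ for all $i$.

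Given this and the inductive hypothesis that every cyclic flat of $M_{i-1}$ is a union of circuits in $\mathcal{C}$, the hypotheses of Lemma~\ref{lem:coveringnew} are satisfied with $M_{i-1}$ in the role of $M$ and $M_i$ in the role of $N$. The lemma then yields that every cyclic flat of $M_i$ is a union of circuits in $\mathcal{C}$, closing the induction. Setting $i = k$ gives the conclusion for $N$.

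The only nontrivial point is the preservation of non-spanning circuits under free erection, and this is immediate from the truncation characterization of erections. The rest of the argument is a mechanical unrolling of Lemma~\ref{lem:coveringnew}, so I expect no real obstacle.
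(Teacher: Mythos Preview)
Your proof is correct and follows essentially the same approach as the paper, which simply states that the result follows from Lemma~\ref{lem:coveringnew} together with the fact that non-spanning circuits are preserved under free erection. You have merely spelled out the induction and the preservation argument (via the truncation description of erections) in more detail than the paper does.
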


\section{\boldmath $K_{d+2}$-matroids and abstract $d$-rigidity}\label{sec:rigidity}
We will apply the preceding theory to 
abstract $d$-rigidity matroids.
We will see that abstract $d$-rigidity matroids are {\em $K_{d+2}$-matroids}, i.e.~matroids on $E(K_n)$ in which the edge set of every copy of $K_{d+2}$ is a circuit,  and show that they have maximum possible rank over all such matroids. We will then  give two important examples of abstract $d$-rigidity matroids,
generic $d$-dimensional rigidity matroids and $C_{d-1}^{d-2}$-cofactor matroids, and describe some related results and conjectures.

Let $G=(V,E)$ be a graph. 
For $X\subseteq V$, let $G[X]$ be the subgraph of $G$ induced by $X$.
For $F\subseteq E$, let $V(F)$ be the set of vertices incident to $F$, and let $G[F]=(V(F),F)$.
For $v\in V$, let $N_G(v)$ be the set of neighbors of $v$ in $G$, and let $d_G(v)=|N_G(v)|$.
For $X=\{v_1,\dots, v_k\}\subseteq V$, let $K(X)$ or $K(v_1,\dots, v_k)$ be the edge set of the complete graph on $X$.

Given a set $F$ in a matroid $M$ defined on $E(K_n)$, we will often abuse notation and use the same letter for both the set $F$ and the graph 
$K_n[F]$.
It will be clear from the context whether we are referring to an edge set or a graph. In addition, we will refer to bases of $M|_F$ as {\em bases of $F$}.

We first use Lemma~\ref{lem:rank1} to obtain an upper bound on the rank of any $K_{d+2}$-matroid. We say that a sequence $(C_1,\dots, C_t)$ of edge sets in $K_n$ is a {\em $K_{t}$-sequence} if each $C_i$ induces a copy of $K_{t}$ in $K_n$.

\begin{lemma}\label{lem:upper_bound}
Let $M$ be a $K_{d+2}$-matroid on the edge set of the complete graph $K_n$ with $n\geq d+2$.
Then its rank is at most $dn-{d+1 \choose 2}$.
\end{lemma}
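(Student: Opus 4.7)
The plan is to combine Lemma \ref{lem:rank1} with an explicit covering of $E(K_n)$ by $K_{d+2}$-subgraphs. Since $M$ is a $K_{d+2}$-matroid, the family $\mathcal{C}$ of edge sets of $K_{d+2}$-subgraphs of $K_n$ consists entirely of circuits of $M$, so Lemma \ref{lem:rank1} applies with this $\mathcal{C}$. Taking $X = E(K_n)$ and any proper $\mathcal{C}$-sequence $(C_1, \ldots, C_t)$ whose union is all of $E(K_n)$ gives $r(E(K_n)) \leq \binom{n}{2} - t$. A direct computation shows $\binom{n}{2} - \binom{n-d}{2} = dn - \binom{d+1}{2}$, so it will suffice to exhibit a proper $\mathcal{C}$-sequence of length exactly $\binom{n-d}{2}$ which covers every edge of $K_n$.

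To build such a sequence, I would fix an ordering $v_1, \ldots, v_n$ of $V(K_n)$ and process the vertices $v_{d+2}, v_{d+3}, \ldots, v_n$ in this order. For $v_{d+2}$, take the single $K_{d+2}$-subgraph on $\{v_1, \ldots, v_{d+2}\}$. For each $i$ with $d+3 \leq i \leq n$, append the $K_{d+2}$-subgraphs on the vertex sets
\[
\{v_i, v_1, \ldots, v_{d+1}\} \quad\text{and}\quad \{v_i, v_j, v_1, \ldots, v_d\} \text{ for each } j = d+2, \ldots, i-1.
\]
The first of these introduces the $d+1$ previously unseen edges $v_iv_1, \ldots, v_iv_{d+1}$, and each subsequent set $\{v_i, v_j, v_1, \ldots, v_d\}$ introduces the brand-new edge $v_iv_j$: no earlier set in the sequence contains both $v_i$ and $v_j$, since the only earlier sets containing $v_i$ are those inserted at step $i$, and none of them mentions $v_j$. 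Hence every set after the first contains a new edge, so the sequence is proper; and the edges introduced at step $i$ are precisely those joining $v_i$ to $\{v_1, \ldots, v_{i-1}\}$, so altogether we cover every edge of $K_n$.

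Finally, the total length of the sequence is $1 + \sum_{i=d+3}^{n}(i - d - 1) = \sum_{k=1}^{n-d-1} k = \binom{n-d}{2}$, which combined with Lemma \ref{lem:rank1} yields the claimed upper bound. I do not anticipate any substantive obstacle: the only subtle point is arranging the $K_{d+2}$-subgraphs into a \emph{proper} sequence rather than merely covering $E(K_n)$, and our construction handles this automatically by forcing the freshly-processed vertex $v_i$ to appear in every set we add at step $i$ alongside at least one previously unused neighbour.
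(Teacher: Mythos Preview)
Your proposal is correct and follows essentially the same approach as the paper: both construct a proper $K_{d+2}$-sequence of length $\binom{n-d}{2}$ covering $E(K_n)$ and then apply Lemma~\ref{lem:rank1}. Your construction is in fact the special case of the paper's where the fixed $d$-vertex set $S_i$ is always taken to be $\{v_1,\ldots,v_d\}$; the paper allows an arbitrary such $S_i$ but the verification is the same.
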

\begin{proof} It will suffice to construct a proper $K_{d+2}$-sequence which covers $K_n$ of length ${{n-d}\choose{2}}$ since Lemma \ref{lem:rank1} will then give $r(M)\leq {{n}\choose{2}}-{{n-d}\choose{2}}=dn-{{d+1}\choose{2}}$. To this end, we let $K_n=K(v_1,v_2,\ldots,v_n)$ and construct a proper $K_{d+2}$-sequence ${\cal K}_i$ of length ${{i-d}\choose{2}}$ which covers $K(v_1,v_2,\ldots, v_i)$ for all $d+2\leq i\leq n$, recursively.
We first put ${\cal K}_{d+2}=(K(v_1,v_2,\ldots,v_{d+2}))$. Then, for each $d+2\leq i\leq n-1$, we construct  a proper $K_{d+2}$-sequence ${\cal K}_{i+1}$ which covers $K(v_1,\dots, v_{i+1})$ by choosing a set $S_{i}$ of $d$ vertices in $K(v_1,v_2,\ldots, v_i)$, putting ${\cal L}_{i+1}=(K(S_i+v_j+v_{i+1})\,:\,1\leq j\leq i \mbox{ and }j\not\in S_i)$, and then putting
${\cal K}_{i+1}=({\cal K}_{i},{\cal L}_{i+1})$.
It is straightforward to check that ${\cal K}_{i+1}$ has length ${{i+1-d}\choose{2}}$ and is proper (for any ordering of the $K_{d+2}$'s in each subsequence ${\cal L}_j$).
\end{proof}

The graphic matroid of $K_n$ and the rank-two uniform matroid on $E(K_n)$ are examples of $K_3$-matroids. In addition, the graphic matroid achieves the upper bound on the rank given by  Lemma \ref{lem:upper_bound}.

Graver~\cite{G91} defined an {\em abstract $d$-rigidity matroid} to be
a matroid $M$ on the edge set of the complete graph $K_n$ which satisfies the following two axioms:
\begin{description}
\item[(A1)] If $E_1, E_2\subseteq E(K_n)$ with $|V(E_1)\cap V(E_2)|\leq d-1$, then
${\rm cl}_M(E_1\cup E_2)\subseteq K(V(E_1))\cup K(V(E_2))$;
\item[(A2)] If $E_1, E_2\subseteq E(K_n)$ with ${\rm cl}_M(E_1)=K(V(E_1))$, ${\rm cl}_M(E_2)=K(V(E_2))$, and $|V(E_1)\cap V(E_2)|\geq d$, then
${\rm cl}_M(E_1\cup E_2)= K(V(E_1\cup E_2))$.
\end{description}
These two conditions reflect two fundamental rigidity properties of generic $d$-dimensional bar-joint frameworks. (See, for example, \cite{GSS93} for more details.)
Nguyen~\cite{N10} obtained a simple characterization of abstract $d$-rigidity matroids which immediately implies that they are special kinds of $K_{d+2}$-matroids.

\begin{theorem}[Nguyen{\cite[Theorem 2.2]{N10}}]\label{thm:hang}
Let $n,d$ be positive integers with $n\geq d+2$ and $M$ be a matroid on
$E(K_n)$.
Then $M$ is an abstract $d$-rigidity matroid if and only if
$M$ is a $K_{d+2}$-matroid with  rank $dn-{d+1\choose 2}$.
\end{theorem}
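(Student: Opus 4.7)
The plan is to prove both directions of the biconditional.

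For the forward direction, assume $M$ is an abstract $d$-rigidity matroid. I proceed in four steps. First, using (A1) with $E_1 = K(U)$ and auxiliary $E_2$'s whose vertex sets meet $V(E_1)$ in at most one vertex (so $|V(E_1)\cap V(E_2)|\le 1\le d-1$), and varying $E_2$ to cover every candidate external edge, I deduce that $K(U)$ is a flat of $M$ for every $U \subseteq V(K_n)$ with $|U| \geq 2$. Second, I apply (A2) to $E_1 = K(v_1, \ldots, v_{d+1})$ and $E_2 = K(v_2, \ldots, v_{d+2})$ --- both flats by Step~1, sharing exactly $d$ vertices --- to obtain $\cl(E_1 \cup E_2) = K(v_1, \ldots, v_{d+2})$, so the edge $v_1 v_{d+2}$ lies in $\cl(E(K_{d+2}) \setminus \{v_1 v_{d+2}\})$ and $E(K_{d+2})$ is dependent; by symmetry every edge of $K_{d+2}$ is in the closure of the rest. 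Third, I upgrade dependence to the circuit property by showing $r(K_{d+2}) = \binom{d+2}{2} - 1$: observing that $E_1 \cup E_2 = E(K_{d+2}) \setminus \{v_1 v_{d+2}\}$ has exactly $\binom{d+2}{2} - 1$ edges and is precisely a Henneberg-style extension of $K_{d+1}$ by the new vertex $v_{d+2}$ attached via $d$ edges, I argue it is independent by interleaving (A1) and (A2) in an induction on the vertex count. Fourth, with $M$ now known to be a $K_{d+2}$-matroid, Lemma~\ref{lem:upper_bound} gives $r(M) \leq dn - \binom{d+1}{2}$; iterating the Henneberg-style extension past $K_{d+1}$ produces a matching independent set of size $\binom{d+1}{2} + d(n - d - 1) = dn - \binom{d+1}{2}$.

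For the reverse direction, assume $M$ is a $K_{d+2}$-matroid with $r(M) = dn - \binom{d+1}{2}$; I verify (A1) and (A2) by rank arguments. For (A1), a cross-edge $e$ lying in $\cl(E_1 \cup E_2)$ when $|V(E_1) \cap V(E_2)| \leq d-1$ would let me extend $E_1 \cup E_2 + e$ by a proper $K_{d+2}$-sequence whose resulting rank exceeds $dn - \binom{d+1}{2}$, contradicting Lemma~\ref{lem:upper_bound}. For (A2), given $E_1, E_2$ closed on their vertex sets with $|V(E_1) \cap V(E_2)| \geq d$, each cross-edge $uw$ is forced into $\cl(E_1 \cup E_2)$ by the $K_{d+2}$-circuit on $\{u, w\}$ together with $d$ shared vertices, all of whose other edges already lie in $K(V(E_1)) \cup K(V(E_2)) = \cl(E_1) \cup \cl(E_2)$.

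The main obstacle is the third step of the forward direction: proving that $K_{d+1}$ is independent in $M$, and more generally that the Henneberg-style extension $E_1 \cup E_2$ is independent. Neither is directly guaranteed by (A1) or (A2) in isolation; establishing them requires a delicate induction in which (A2) propagates rigidity of closures and (A1) rules out spurious smaller circuits. Once this is in place, the remaining steps are routine applications of Lemma~\ref{lem:upper_bound} and submodularity.
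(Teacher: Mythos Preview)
The paper does not supply its own proof of Theorem~\ref{thm:hang}; it is quoted from Nguyen~\cite{N10} and used as a black box. There is therefore no proof in the paper to compare your attempt against, so I assess your sketch on its own merits.

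Your forward direction is essentially sound. Step~1 is even simpler than you suggest: taking $E_2=\emptyset$ in (A1) gives $\cl_M(E_1)\subseteq K(V(E_1))$ for every $E_1$, so every $K(U)$ is a flat without any auxiliary $E_2$'s. Step~3, which you rightly flag as the crux, reduces to the $0$-extension property, and this does follow from (A1) alone by the kind of argument you sketch: if adding $k\le d$ edges at a new vertex $v$ to an independent $F$ created a circuit $C$, pick $vv_i\in C$, split $C-vv_i$ into the at most $d-1$ remaining edges at $v$ and the rest, and apply (A1) to see $vv_i\notin\cl(C-vv_i)$. Iterating this builds $K_{d+1}$ and then $K_{d+2}-e$ independently, and Step~4 goes through.

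Your reverse direction has a genuine gap in the verification of (A1). The claim that a forbidden edge $e\in\cl(E_1\cup E_2)$ ``would let me extend $E_1\cup E_2+e$ by a proper $K_{d+2}$-sequence whose resulting rank exceeds $dn-\binom{d+1}{2}$, contradicting Lemma~\ref{lem:upper_bound}'' is not coherent: no rank can exceed $r(M)=dn-\binom{d+1}{2}$, and Lemmas~\ref{lem:rank1} and~\ref{lem:upper_bound} only ever produce \emph{upper} bounds, so there is nothing for such a sequence to contradict. What is actually required is a \emph{lower} bound argument showing that $K(V(E_1))\cup K(V(E_2))$ is already a flat when $|V(E_1)\cap V(E_2)|\le d-1$; here one cannot appeal to (A1), and the only available leverage is the global rank hypothesis $r(M)=dn-\binom{d+1}{2}$ together with the fact that circuits on at most $d+1$ vertices are impossible (since any such circuit would sit properly inside a $K_{d+2}$). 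Turning this into a proof that no circuit crosses between $K(V(E_1))$ and $K(V(E_2))$ through a cut of size at most $d-1$ takes real work that your sketch does not supply. Note also that (A1) must handle edges with an endpoint outside $V(E_1)\cup V(E_2)$, not just cross-edges, and that your (A2) argument, while correct for the inclusion $K(V(E_1\cup E_2))\subseteq\cl(E_1\cup E_2)$, needs the fact that $K(V(E_1\cup E_2))$ is a flat---i.e.\ needs (A1)---for the reverse inclusion, so the gap propagates.
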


Lemma~\ref{lem:upper_bound} and Theorem~\ref{thm:hang} imply that abstract $d$-rigidity matroids are precisely the $K_{d+2}$-matroids on $E(K_n)$ which attain the maximum possible rank.

\subsection{Two fundamental examples} \label{subsec:examples}
We describe two examples of abstract $d$-rigidity matroids which have been extensively studied in the literature.

\paragraph{\boldmath Generic rigidity matroids.}
A {\em $d$-dimensional framework} is a pair $(G,p)$ consisting of  a graph $G=(V,E)$ and a map $p:V\rightarrow \mathbb{R}^d$.
An {\em infinitesimal motion} of $(G,p)$ is a map $q:V\rightarrow \mathbb{R}^d$ such that
\begin{equation}\label{eq:inf}
(p(u)-p(v))\cdot (q(u)-q(v))=0 \qquad (uv\in E),
\end{equation}
where $\cdot$ denotes the Euclidean inner product.
The {\em rigidity matrix} $R(G,p)$ of $(G,p)$ is the matrix representing the  linear system  of equations (\ref{eq:inf}) in the variables $q$.
Specifically, $R(G,p)$ is a matrix of size $|E|\times d|V|$ in which each row is indexed by an edge, sets of $d$ consecutive columns are indexed by the vertices, and the row indexed by the  edge $e=uv$ has the form:
\[
\kbordermatrix{
  & &  u & & v & \\
 e=uv & 0 \cdots 0 & p(u)-p(v) & 0\cdots 0 & p(v)-p(u) & 0\cdots 0
}.
\]
The framework $(G,p)$ is said to be {\em infinitesimally rigid} if $\rank R(G,p)=dn-{d+1\choose 2}$.

The {\em rigidity matroid} of $(G,p)$ is the matroid on $E$ in which a set of edges is independent if the corresponding rows of $R(G,p)$ are linearly independent.
The rigidity matroid of $(G,p)$ is the same for any generic
$p:V\to \R^d$ and we refer to this matroid as
the {\em generic $d$-dimensional rigidity matroid} ${\cal R}_d(G)$ of $G$.
The generic $d$-dimensional rigidity matroid of the complete graph $K_n$ is called the {\em generic $d$-dimensional rigidity matroid} (on $n$ vertices) and is denoted by ${\cal R}_{d,n}$.
A graph $G=(V,E)$ with $n$ vertices is said to be {\em rigid in $\R^d$} if $E$ is a spanning set of ${\cal R}_{d,n}$
or equivalently, when $n\geq d$, if the rank of $E$ is $dn-{d+1\choose 2}$.
Combinatorial characterizations of graphs which are rigid in $\R^d$ exist for $d=1,2$. Obtaining an analogous characterization for $d\geq 3$ is
the central open problem in graph rigidity.

\paragraph{Generic cofactor matroids.}
Let $G=(V,E)$ be a graph and $p:V\to \R^2$ such that $p(v_i)=(x_i,y_i)$ for all $v_i\in V$.
We assume that  the vertices are ordered as $v_1,v_2,\ldots,v_n$.
As given in the introduction,
the {\em $C^{s-1}_s$-cofactor matrix} of $(G,p)$, denoted by $C_s^{s-1}(G,p)$, is a matrix of size
$|E|\times (s+1)|V|$  in which each row is indexed by an edge, sets of $(s+1)$ consecutive columns are indexed by the vertices,
and the row associated to the  edge $e=v_iv_j$ with $i<j$ is
\[
\kbordermatrix{
 & & v_i & & v_j & \\
 e=v_iv_j & 0\cdots 0 & D_{ij} & 0 \cdots 0 & -D_{ij} & 0\cdots 0
},
\]
where  $D_{i,j}=((x_i-x_j)^s, (x_i-x_j)^{s-1}(y_i-y_j),\dots, (x_i-x_j)(y_i-y_j)^{s-1}, (y_i-y_j)^s)\in \mathbb{R}^{s+1}$.
(Our definition is slightly different to that given by Whiteley \cite{Wsurvey}, but the two definitions are equivalent up to elementary column operations.)
When $s\geq 1$, the space $S_s^{s-1}(\Delta)$ of bivariate $C_s^{s-1}$-splines over $\Delta$ is linearly isomorphic to the left kernel of  $C_s^{s-1}(G,p)$ if $(G,p)$ is the 1-skeleton of a subdivision $\Delta$ of a polygonal domain in the plane,
see, e.g., \cite{Wsurvey} for more details. Note that, when $s=0$, $D_{i,j}\equiv 1$ and $C_s^{s-1}(G,p)$ is just the edge/vertex incidence matrix of $G$. 

The {\em generic $C_s^{s-1}$-cofactor matroid} ${\cal C}_{s,n}^{s-1}$ is defined to be the row matroid of $C_s^{s-1}(K_n,p)$ for any generic $p$. Whiteley~\cite{Wsurvey} showed that  ${\cal C}_{d-1,n}^{d-2}$ is an abstract $d$-rigidity matroid for all $d\geq 1$. It follows immediately from the definitions of the rigidity and cofactor matrices  that ${\cal C}_{d-1,n}^{d-2}={\cal R}_{d,n}$ when $d=1,2$.  Whiteley~\cite{Wsurvey} showed that that ${\cal C}_{d-1,n}^{d-2}\neq{\cal R}_{d,n}$ when $d\geq 4$ and $n\geq 2d+2$ and conjectured that the two matroids are equal when $d=3$.

\subsection{Maximality conjectures}
As noted above, abstract $d$-rigidity matroids are $K_{d+2}$-matroids which attain the maximum possible rank.
Graver~\cite{G91} conjectured further that the generic $d$-dimensional rigidity matroid is the unique maximal matroid in the weak order poset of all abstract $d$-rigidity matroids on $E(K_n)$, and verified his conjecture when $d=1,2$.
 N.~J.~Thurston (see, \cite[page 150]{GSS93}) subsequently showed that Graver's conjecture is false when $d\geq 4$.
The conjecture remains as a long-standing open problem for $d=3$.

\begin{conjecture}[Graver's maximality conjecture~{\cite{G91}}]\label{conj:rigidty_maximality}
The generic $3$-dimensional rigidity matroid is the unique maximal abstract $3$-rigidity matroid on $E(K_n)$.
\end{conjecture}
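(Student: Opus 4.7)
The plan is to reduce Graver's conjecture to Whiteley's conjecture, which asserts that $\mathcal{R}_{3,n} = \mathcal{C}_2^1(K_n)$. Since $\mathcal{R}_{3,n}$ is itself an abstract $3$-rigidity matroid (the classical fact that generic $3$-dimensional rigidity satisfies axioms (A1) and (A2)), the authors' previous result Theorem~\ref{thm:cofactor} gives $\mathcal{R}_{3,n} \preceq \mathcal{C}_2^1(K_n)$ in the weak order. Consequently, to prove Graver's conjecture it suffices to establish the reverse weak-order inequality $\mathcal{C}_2^1(K_n) \preceq \mathcal{R}_{3,n}$; for then any abstract $3$-rigidity matroid $M$ satisfies $M \preceq \mathcal{C}_2^1(K_n) = \mathcal{R}_{3,n}$, so $\mathcal{R}_{3,n}$ becomes the unique maximal element of the poset.

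My strategy for establishing $\mathcal{C}_2^1(K_n) \preceq \mathcal{R}_{3,n}$ would exploit the rank characterization of $\mathcal{C}_2^1(K_n)$ given by Theorem~\ref{thm:rank}. Since $\mathcal{R}_{3,n}$ is a $K_5$-matroid, Lemma~\ref{lem:rank1} applied to the family $\mathcal{C}_0$ of copies of $K_5$ already yields $r_{\mathcal{R}_{3,n}}(X) \leq f_{\mathcal{C}_0}(X)$ for every $X \subseteq E(K_n)$, and Theorem~\ref{thm:rank} identifies $f_{\mathcal{C}_0}(X)$ with $r_{\mathcal{C}_2^1}(X)$. Thus the task reduces to producing, for each $X$, a configuration $p: V \to \mathbb{R}^3$ such that the rows of the rigidity matrix indexed by some $\mathcal{C}_2^1$-basis of $X$ are linearly independent. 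An inductive approach seems most natural: start from a $\mathcal{C}_2^1$-independent set of small size and grow it by graph operations (Henneberg-type extensions, vertex splits, $X$-replacements, and their combinations) that preserve both $\mathcal{R}_3$-independence and $\mathcal{C}_2^1$-independence, aiming to cover every $\mathcal{C}_2^1$-independent set via such constructions.

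The hard part is exactly the central obstruction of $3$-dimensional rigidity theory: no combinatorial characterization of the circuits of $\mathcal{R}_{3,n}$ beyond the copies of $K_5$ is currently known, and the available catalogue of rigidity-preserving operations is not known to be rich enough to reach every element of the free elevation constructed in Section~\ref{sec:c-matroid}. While Lemma~\ref{lem:covering} tells us that every cyclic flat of $\mathcal{C}_2^1(K_n)$ is a union of $K_5$'s, translating this modular-cyclic-closure structure into explicit rank computations for the rigidity matrix appears to require a genuinely new algebraic tool --- perhaps an analogue, for the squared-distance polynomials defining the rigidity matrix, of the cofactor identities employed in \cite{CJT1}. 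Without such a tool, even verifying that a single $\mathcal{C}_2^1$-circuit not isomorphic to $K_5$ is $\mathcal{R}_3$-dependent remains out of reach, and this algebraic gap is the essence of what any proof of the conjecture must bridge.
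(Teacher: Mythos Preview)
The statement you are attempting to prove is presented in the paper as an \emph{open conjecture}, not a theorem; the paper explicitly says that ``the conjecture remains as a long-standing open problem for $d=3$.'' There is therefore no proof in the paper to compare against. Your proposal is not a proof either: you correctly reduce Graver's conjecture to Whiteley's equality conjecture $\mathcal{R}_{3,n}=\mathcal{C}^1_{2,n}$, note that one direction $\mathcal{R}_{3,n}\preceq\mathcal{C}^1_{2,n}$ follows from Theorem~\ref{thm:cofactor} (or equivalently from Lemma~\ref{lem:rank1} and Theorem~\ref{thm:rank}), and then acknowledge that the reverse direction is blocked by exactly the obstruction the paper highlights after Theorem~\ref{thm:construction_cofactor}, namely the unresolved $3$-dimensional $X$-replacement conjecture.

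There is one genuine error in your diagnosis. In your final sentence you write that ``even verifying that a single $\mathcal{C}_2^1$-circuit not isomorphic to $K_5$ is $\mathcal{R}_3$-dependent remains out of reach.'' This has the direction backwards. Since $\mathcal{R}_{3,n}\preceq\mathcal{C}^1_{2,n}$ is already established (and you yourself derived it two paragraphs earlier), every $\mathcal{C}^1_{2,n}$-dependent set---in particular every $\mathcal{C}^1_{2,n}$-circuit---is automatically $\mathcal{R}_{3,n}$-dependent. What is \emph{not} known is the converse implication you actually need for $\mathcal{C}^1_{2,n}\preceq\mathcal{R}_{3,n}$: that every $\mathcal{C}^1_{2,n}$-independent set is $\mathcal{R}_{3,n}$-independent, or equivalently that every $\mathcal{R}_{3,n}$-circuit is $\mathcal{C}^1_{2,n}$-dependent. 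Your earlier sentence (``producing, for each $X$, a configuration $p$ such that the rows \ldots\ are linearly independent'') had this right; the closing sentence contradicts it.
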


 Whiteley~\cite{Wsurvey}  developed a theory for $C_{d-1}^{d-2}$-cofactor matroids which is analogous to that for $d$-dimensional rigidity matroids. In particular, he provided further counterexamples to Graver's maximality conjecture  by noting that the edge set of the complete bipartite graph $K_{d+2,d+2}$ is dependent in ${\cal R}_{d,n}$ for all $d\geq 1$ but independent in $C_{d-1,n}^{d-2}$ when $d\geq 4$.  This led him to make the following modified conjecture.

\begin{conjecture}[Whiteley's maximality conjecture~{\cite[Conjecture 11.5.1]{Wsurvey}}]\label{conj:max_cofactor}
The generic  $C_{d-1}^{d-2}$-cofactor matroid ${\cal C}_{d-1,n}^{d-2}$ is the unique maximal abstract $d$-rigidity matroid on $E(K_n)$ for all $d\geq 1$.
\end{conjecture}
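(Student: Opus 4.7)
The plan is to reduce Whiteley's conjecture to the rank formula developed by the matroid elevation framework. By Theorem~\ref{thm:hang} (Nguyen), an abstract $d$-rigidity matroid on $E(K_n)$ is exactly a $K_{d+2}$-matroid of rank $dn - \binom{d+1}{2}$. Letting $\mathcal{C}_0$ denote the family of edge sets of all $K_{d+2}$-subgraphs of $K_n$, the conjecture becomes equivalent to asserting that ${\cal C}_{d-1,n}^{d-2}$ is the unique maximal $\mathcal{C}_0$-matroid, once one has checked (due to Whiteley) that ${\cal C}_{d-1,n}^{d-2}$ is itself a $K_{d+2}$-matroid of the correct rank. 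The key is then to establish the special case of Conjecture~\ref{conj:rank} asserting that $f_{\mathcal{C}_0}$ is a matroid rank function and in fact coincides with the rank function of ${\cal C}_{d-1,n}^{d-2}$.

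Granted this rank identity, the argument closes in one stroke: for any abstract $d$-rigidity matroid $M$ on $E(K_n)$, Lemma~\ref{lem:rank1} applied to $\mathcal{C}_0$ (using that $M$ is a $\mathcal{C}_0$-matroid) gives
\[
r_M(X) \leq f_{\mathcal{C}_0}(X) = r_{{\cal C}_{d-1,n}^{d-2}}(X)
\]
for every $X \subseteq E(K_n)$, so $M \preceq {\cal C}_{d-1,n}^{d-2}$. Since ${\cal C}_{d-1,n}^{d-2}$ is itself an abstract $d$-rigidity matroid and weakly above every such matroid, it is the unique maximum, hence the unique maximal, element of the weak order poset.

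The main obstacle is therefore proving the rank formula. Its upper bound $r_{{\cal C}_{d-1,n}^{d-2}}(X) \leq f_{\mathcal{C}_0}(X)$ is immediate from Lemma~\ref{lem:rank1} applied to ${\cal C}_{d-1,n}^{d-2}$ itself. The reverse inequality is the hard direction: for every $X \subseteq E(K_n)$, one must produce an independent set of ${\cal C}_{d-1,n}^{d-2}|_X$ whose size attains $f_{\mathcal{C}_0}(X)$. For $d=3$ this is the content of the present paper's main theorem, which I would tackle by induction on $|X|$, alternating combinatorial splitting at low-order vertex separators of the graph $K_n[X]$ with algebraic arguments exploiting the polynomial structure of the $C_2^1$-cofactor matrix; Lemma~\ref{lem:covering} would be essential here, since it guarantees that cyclic flats of the free elevation decompose into $K_5$'s, allowing an optimal proper $\mathcal{C}_0$-sequence witnessing $f_{\mathcal{C}_0}$ to be assembled from circuits of $M_0$. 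For $d \geq 4$ the conjecture remains open, and this is the genuinely hardest part: because $K_{d+2,d+2}$ is independent in ${\cal C}_{d-1,n}^{d-2}$ but dependent in $\mathcal{R}_{d,n}$, the structural tools of $d$-dimensional rigidity theory cannot be imported wholesale, and a fundamentally new combinatorial handle on cofactor independence would be needed to realize $f_{\mathcal{C}_0}$ as an actual base.
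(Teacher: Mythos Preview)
The statement you are attempting to prove is a \emph{conjecture} in the paper, not a theorem; the paper does not prove it for general $d$, and neither do you---you yourself concede that for $d\geq 4$ ``a fundamentally new combinatorial handle\ldots would be needed.'' So this is not a proof but a programme, and the honest status is exactly as the paper leaves it: verified for $d\leq 3$, open for $d\geq 4$.

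For the case $d=3$ that \emph{is} settled, your logical route is the reverse of the paper's. You propose: establish the rank identity $r_{{\cal C}^1_{2,n}}=f_{{\cal C}_0}$ first, then deduce maximality from Lemma~\ref{lem:rank1}. The paper does the opposite: maximality of ${\cal C}^1_{2,n}$ among all $K_5$-matroids is proved \emph{directly} in the companion paper~\cite{CJT1} (stated here as Theorem~\ref{thm:cofactor}), using Whiteley's $X$-replacement property for the cofactor matroid (Theorem~\ref{thm:construction_cofactor}); the rank formula (Theorem~\ref{thm:rank}) is then \emph{derived} as a consequence, via Theorem~\ref{thm:K5free} and Lemma~\ref{lem:unique}. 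Your route is logically sound in principle, but note that the paper's proof of Theorem~\ref{thm:rank} actually \emph{relies} on Theorem~\ref{thm:cofactor} (through Corollary~\ref{cor:maximal}), so you would need an independent proof of the rank formula---and the sketch you give (induction on $|X|$, splitting at low-order vertex separators, algebraic arguments on the cofactor matrix) does not match how Theorem~\ref{thm:rank} is actually proved: that argument is a purely combinatorial induction on $r(X)$, pivoting on the existence of a simplicial vertex of degree three in some base of a cyclic flat (part~(b)), with the minimum-degree machinery of Lemmas~\ref{lem:degree} and~\ref{lem:cyclicdegree} doing the heavy lifting. No direct algebraic manipulation of the cofactor matrix appears in that proof.
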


The conjecture holds for
$d=1,2$, since Graver's conjecture holds for $d=1,2$ and the generic cofactor and rigidity matroids are the same for these values of $d$.
The main result of our first paper in this series \cite{CJT1} verifies Whiteley's maximality conjecture when $d=3$.
 \begin{theorem}\label{thm:cofactor}
  The generic $C_{2}^1$-cofactor matroid ${\cal C}^1_{2,n}$ is the unique maximal $K_5$-matroid on $E(K_n)$.
  \end{theorem}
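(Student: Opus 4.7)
The claim has two components: that ${\cal C}^1_{2,n}$ is itself a $K_5$-matroid, and that every $K_5$-matroid $M$ on $E(K_n)$ satisfies $M\preceq {\cal C}^1_{2,n}$. Together these give a unique maximum in weak order.

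For the first component, since Whiteley's theorem already identifies ${\cal C}^1_{2,n}$ as an abstract $3$-rigidity matroid, Theorem~\ref{thm:hang} yields that it has rank $3n-6$, and Lemma~\ref{lem:upper_bound} bounds the rank of any $5$-vertex subgraph from above by $9$. I would then complete this part by showing that the rank of a $K_5$-subgraph is exactly $9$ and that every $9$-edge subset is independent. This reduces to a direct calculation on the $10\times 15$ cofactor submatrix indexed by $K_5$: one exhibits the unique dependency among its rows (dictated by the bivariate polynomial structure of the $D_{ij}$ vectors) and uses the $S_5$-symmetry of $K_5$ to verify that no dependency on fewer rows exists.

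For the second component, the plan is to show that every circuit of ${\cal C}^1_{2,n}$ is dependent in every $K_5$-matroid $M$, by induction on the number of vertices spanned by the circuit. The base case $|V(C)|=5$ forces $C=K_5$ by the rank computation above, and is handled by the defining property of $K_5$-matroids. For the inductive step, I would appeal to Henneberg-type reductions---either removing a vertex of degree three, or reversing a vertex-split at a higher-degree vertex---to produce a ${\cal C}^1_{2,n}$-circuit on fewer vertices. The inductive hypothesis, combined with the abstract $3$-rigidity axioms (A1) and (A2) applied to $M$ via Theorem~\ref{thm:hang}, would then propagate dependency back up to $C$.

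The main obstacle is the existence and the propagation of these Henneberg-type reductions. On one side, one must analyse the degree structure of ${\cal C}^1_{2,n}$-circuits to show that a suitable low-degree or splittable vertex always exists and that the reduction yields a smaller ${\cal C}^1_{2,n}$-circuit; this rests on detailed cofactor-matrix calculations, especially for the vertex-split reduction. On the other side, one must show that, for every $K_5$-matroid $M$, such a reduction preserves dependency, i.e.\ if the smaller graph is $M$-dependent then so is the larger one. This latter step uses only the abstract $3$-rigidity axioms and the $K_5$-circuit property, and serves as the bridge between the combinatorial structure of maximal $K_5$-matroids and the analytic structure of the cofactor matroid.
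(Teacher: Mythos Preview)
This theorem is not proved in the present paper; it is imported from the companion paper \cite{CJT1}, so there is no proof here to match against. The only hint the paper gives is that Theorem~\ref{thm:construction_cofactor} (the $1$-extension and X-replacement properties \emph{of} ${\cal C}^1_{2,n}$) is ``an important ingredient'' in the proof. That remark already signals that the argument in \cite{CJT1} runs in the opposite direction to yours: one shows that every set independent in an arbitrary $K_5$-matroid $M$ can be built up by $0$-, $1$- and X-extensions, and since ${\cal C}^1_{2,n}$ is closed under these operations, the set is independent in ${\cal C}^1_{2,n}$.

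Your scheme has two genuine gaps. First, you invoke axioms (A1)--(A2) for $M$ via Theorem~\ref{thm:hang}, but that theorem requires $M$ to have rank $3n-6$; an arbitrary $K_5$-matroid need not (the rank-$10$ truncation $R_n$ is a $K_5$-matroid). So as written your argument only addresses abstract $3$-rigidity matroids, not the stronger statement about all $K_5$-matroids. Second, and more seriously, the ``propagation'' step is in the wrong direction. Extension properties say that independence is preserved when you \emph{add} a vertex; contrapositively, dependence is preserved when you \emph{remove} one. You need the reverse: from $C'$ dependent in $M$ to the larger $C$ dependent in $M$, where $C$ is obtained from $C'$ by a Henneberg extension. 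Neither the $K_5$-circuit property nor axioms (A1)--(A2) give this implication. Concretely, if $C' = C - v + ab$ is $M$-dependent because $ab \in \cl_M(C-v)$, you would need the four edges $va,vb,vc,vd$ to add at most three to $r_M(C-v)$; this would follow if $K(a,b,c,d) \subseteq \cl_M(C-v)$ (so that the $K_5$ on $\{v,a,b,c,d\}$ applies), but nothing forces that. (A minor side issue: circuits in ${\cal C}^1_{2,n}$ have minimum degree at least four, so your degree-three reduction never fires.)
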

 Theorem \ref{thm:cofactor} is stronger than Whiteley's conjecture since it holds for the larger class of $K_5$-matroids.
We can use a similar proof technique to that given by Graver to show that analogous results hold when $d=1,2$:
the generic 1-dimensional rigidity matroid is the unique maximal $K_3$-matroid and   the generic 2-dimensional rigidity matroid
is the unique maximal $K_4$-matroid on $E(K_n)$.
Whiteley's conjecture, and the corresponding strengthening to the larger class of $K_{d+2}$-matroids,   remain open for all $d\geq 4$.

\subsection{Inductive constructions}\label{subsec:inductive_rigidity}
Inductive constructions are  frequently used to solve problems in rigidity.
The most common operations in this context are $k$-extensions, which are defined as follows.
Given integers $k$ and $d$ with $0\leq k\leq d$, a {\em
 $d$-dimensional  $k$-extension} of a graph $G$ constructs a new graph by deleting $k$ edges from $G$ and then adding a new vertex $v_0$ and $d+k$ new edges incident to $v_0$, with the proviso that each end-vertex of a deleted edge is a neighbor of $v_0$. 
%
%
 See Figure~\ref{fig:X} for an example.

\begin{figure}[t]
\centering
\includegraphics[scale=1]{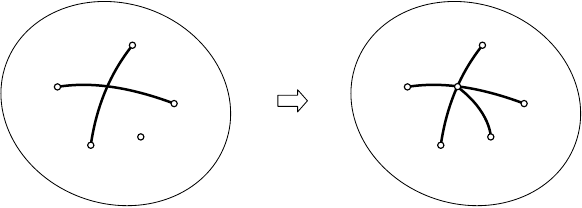}
\caption{An example of 3-dimensional 2-extension (X-replacement).}
\label{fig:X}
\end{figure}

It is well-known that,
if $G$ is rigid in $\R^d$ and $k=0,1$, then any graph obtained from $G$ by a $k$-extension is rigid  in $\R^d$, see for example \cite{Wsurvey}.
This fact was used by Pollaczek-Geiringer \cite{P-G} to show that Maxwell's necessary condition for rigidity (given in the introduction) is also sufficient in dimension two.
%

 The situation becomes more complicated when $k\geq 2$.
In particular, we can distinguish two types of $2$-extensions, depending on whether or not the two deleted edges are adjacent:
a $2$-extension is called a {\em V-replacement} if the deleted edges are adjacent,
and otherwise is called an {\em X-replacement}. See Figure~\ref{fig:X}.
It is conjectured that 3-dimensional X-replacement preserves rigidity in $\R^3$.

\begin{conjecture}[X-replacement conjecture~\cite{TW85}]
Suppose that $G$ and $H$ are graphs and that $H$ can be obtained from $G$ by a $3$-dimensional X-replacement.
If $G$ is rigid in $\R^3$, then $H$ is rigid in $\R^3$.
\end{conjecture}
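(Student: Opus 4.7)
I propose to attack the X-replacement conjecture by a semi-continuity and degeneration argument applied directly to the rigidity matrix of $H$.  Let $G$ be rigid in $\R^3$ and let $H$ be obtained from $G$ by X-replacing non-adjacent edges $ab$ and $cd$ with a new vertex $v_0$ adjacent to $a, b, c, d$ and a fifth vertex $w$. Because the rank of a matrix is lower semicontinuous in its entries, it is enough to exhibit a single placement $p^*$ of $V(H)$ at which $R(H, p^*)$ attains the maximum possible value $3|V(H)|-6$; then $\rank R(H, p) \geq 3|V(H)|-6$ for a Zariski-generic $p$, which forces equality and hence rigidity of $H$.

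The candidate $p^*$ is the following degeneration. Choose a placement $q$ of $V(G)$ keeping $G$ infinitesimally rigid while also satisfying the codimension-one condition that the lines $\overline{q(a)q(b)}$ and $\overline{q(c)q(d)}$ meet at a point, and set $p^*(v) = q(v)$ for $v \in V(G)$ and $p^*(v_0) =$ the intersection point. Writing $p^*(v_0) = (1-t)q(a) + t\,q(b) = (1-s)q(c) + s\,q(d)$, the combinations $r_{v_0 a}/t + r_{v_0 b}/(1-t)$ and $r_{v_0 c}/s + r_{v_0 d}/(1-s)$ reproduce the rigidity rows of the deleted edges $ab$ and $cd$ respectively, with zero entries in the $v_0$-columns.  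Consequently the row span of $R(H, p^*)$ contains the entire row span of $R(G, q)$.  The three remaining rows --- for $v_0 b$, $v_0 d$, and $v_0 w$ --- project onto the $v_0$-columns as the three vectors $q(b)-q(a)$, $q(d)-q(c)$, and $q(v_0)-q(w)$ (up to nonzero scalars); provided these are $\R$-linearly independent, a generic condition on the coplanarity variety once $w$ is not in the plane determined by $a, b, c, d$, they contribute three more to the rank.  Combining with rigidity of $(G, q)$ gives $\rank R(H, p^*) = (3|V(G)|-6)+3 = 3|V(H)|-6$, and semicontinuity then finishes the argument.

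The hard step --- and essentially the reason the conjecture has resisted proof --- is to guarantee that $(G, q)$ remains infinitesimally rigid at the degenerate placement $q$.  The condition that the two lines meet forces the four endpoints $q(a), q(b), q(c), q(d)$ to be coplanar, and coplanarity of four vertices is a well-known mechanism by which a rigid graph can lose rigidity: for instance, $K_5 \setminus e$ admits an infinitesimal flex at every placement in which the four endpoints of any non-adjacent pair of its edges are coplanar. Hence the Zariski-open set on which $(G, q)$ is rigid could fail to meet the codimension-one coplanarity variety in a dense subset. To salvage the plan I would exploit the freedom to vary the pair of deleted edges $ab, cd$, or the choice of the fifth neighbour $w$ of $v_0$, so as to land in a part of the coplanarity variety where $G$ stays rigid. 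The structural data needed to make such a choice --- precisely which rigid subgraphs of $G$ are responsible for the rank drop at a coplanar configuration --- is exactly what our rank formula, Theorem~\ref{thm:rank}, supplies for the cofactor matroid $\mathcal{C}_{2}^{1}$. The principal remaining obstacle is then to transfer this information from the cofactor matroid to $\mathcal{R}_{3}(G)$, an instance of Whiteley's equivalence conjecture which itself constitutes a major open problem.
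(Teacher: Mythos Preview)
The statement you are attempting to prove is an \emph{open conjecture}: the paper does not supply a proof, and indeed remarks that it ``remains as a major barrier'' and refers the reader to \cite{C14} for its status. There is therefore no proof in the paper to compare against.

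Your degeneration-to-a-common-point argument is the classical heuristic; it is exactly what makes the $1$-extension and V-replacement proofs work, and you have correctly isolated the point at which it breaks down for X-replacement: forcing the lines $\overline{q(a)q(b)}$ and $\overline{q(c)q(d)}$ to meet pushes $q$ onto the codimension-one variety where $a,b,c,d$ are coplanar, and there is no reason the Zariski-open rigidity locus of $G$ should meet that variety. Your own $K_5\setminus e$ example shows this can genuinely fail, so this is not a technicality to be patched but the heart of the problem.

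The two salvage routes you sketch are both genuine gaps. First, you do \emph{not} have the freedom to ``vary the pair of deleted edges $ab,cd$, or the choice of the fifth neighbour $w$'': the conjecture fixes $G$, the two non-adjacent edges, and the five neighbours of $v_0$, and asks about the resulting $H$; changing any of these changes $H$. Second, invoking Theorem~\ref{thm:rank} to analyse the rank drop and then transferring the conclusion via Whiteley's equivalence $\mathcal{C}^1_2=\mathcal{R}_3$ is circular: if that equivalence were available, the X-replacement conjecture for $\mathcal{R}_3$ would follow \emph{immediately} from Theorem~\ref{thm:construction_cofactor} (which already establishes the X-replacement property for $\mathcal{C}^1_2$), and your entire degeneration argument would be superfluous. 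In short, the proposal reduces one open problem to another of at least equal strength, and does not constitute progress on the conjecture.
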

\noindent
See \cite{C14} for more details about this conjecture.
Maehara~\cite{M91} pointed out that the  analogous statement for 4-dimensional $X$-replacement does not hold in general.

The $d$-dimensional $k$-extension operation can be applied to any graph and hence can be used to investigate independence properties of any matroid $M$ defined on the edge set of $K_n$.
%
%
%
%
We say that $M$ has the {\em $d$-dimensional $k$-extension property} if
every edge set obtained from an independent set by a $d$-dimensional $k$-extension operation remains independent.

A basic fact about abstract $d$-rigidity matroids is that any abstract $d$-rigidity matroid has the $d$-dimensional 0-extension property~\cite{GSS93}.
As noted above, the generic $d$-dimensional rigidity matroid also has the $d$-dimensional 1-extension property.
For $C_{d-1}^{d-2}$-cofactor matroids, Whiteley \cite[Theorem 11.4.1]{Wsurvey} proved the following:
\begin{theorem}
\label{thm:construction_cofactor}
The generic $C_{d-1}^{d-2}$-cofactor matroid has the $d$-dimensional $1$-extension property for all $d\geq 1$, and the $d$-dimensional X-replacement property for all $d\geq 2$.
\end{theorem}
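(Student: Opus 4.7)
The plan is to prove both statements by a direct rank computation on the cofactor matrix $C_{d-1}^{d-2}(G',p)$ at a carefully chosen specialization of $p$, and then to conclude via upper semi-continuity of matrix rank that $E'$ is independent at a generic $p$. The driving geometric observation is that $D_{i,j}$ is the vector of degree-$(d{-}1)$ monomials in the coordinates of $p(v_i)-p(v_j)$, so if $p(v_0)$ lies on the line through $p(u)$ and $p(w)$ in $\R^2$ then $D_{v_0,u}$ and $D_{v_0,w}$ are each scalar multiples of $D_{u,w}$; consequently the row $R_{uw}$ of the cofactor matrix can be written as an explicit linear combination of $R_{v_0u}$ and $R_{v_0w}$.

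For the $1$-extension property, let $E$ be independent in ${\cal C}_{d-1,n}^{d-2}$ and form $E'=(E\setminus\{u_0u_1\})\cup\{v_0u_0,v_0u_1,\ldots,v_0u_d\}$, with $|E'|=|E|+d$. I would take $p$ generic on $V(G)$ and place $p(v_0)$ at a generic point of the line through $p(u_0)$ and $p(u_1)$. The $|E|-1$ rows of $E\setminus\{u_0u_1\}$ are independent and vanish on the $v_0$-columns; adding $R_{v_0u_0}$ and $R_{v_0u_1}$ brings $R_{u_0u_1}$ into the span by the observation and contributes one new direction in the $v_0$-columns, giving rank $|E|+1$. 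Each subsequent row $R_{v_0u_j}$ for $j=2,\ldots,d$ contributes one further independent direction in the $v_0$-columns, since the $d$ vectors $D_{v_0,u_0},D_{v_0,u_2},\ldots,D_{v_0,u_d}$ are generically linearly independent in $\R^d$. The rank at the specialization is $(|E|-1)+2+(d-1)=|E|+d=|E'|$.

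For X-replacement with $d\geq 2$, $E'=(E\setminus\{u_0u_1,u_2u_3\})\cup\{v_0u_0,\ldots,v_0u_{d+1}\}$. Since $p$ takes values in $\R^2$, the two lines through $p(u_0),p(u_1)$ and through $p(u_2),p(u_3)$ intersect generically in a unique point, which I would take as $p(v_0)$. The observation now shows that both $R_{u_0u_1}$ and $R_{u_2u_3}$ lie in the span of the four rows $R_{v_0u_0},R_{v_0u_1},R_{v_0u_2},R_{v_0u_3}$; together these four rows contribute exactly two new directions in the $v_0$-columns, namely the span of $D_{u_0,u_1}$ and $D_{u_2,u_3}$, which are generically independent for $d\geq 2$. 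The remaining $d-2$ rows $R_{v_0u_j}$ each add a further independent direction, yielding rank $(|E|-2)+4+(d-2)=|E|+d=|E'|$.

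The main technical obstacle is verifying the genericity claim underlying both computations: that $d$ vectors of the form $D_{v_0,u_j}$ span $\R^d$ at a generic $p$. Writing $p(u_j)-p(v_0)=(a_j,b_j)$, the $d\times d$ matrix with rows $(a_j^{d-1-i}b_j^{i})_{i=0}^{d-1}$ factors as the product of $\mathrm{diag}(a_j^{d-1})$ and the Vandermonde matrix in the slopes $b_j/a_j$, so its determinant is a nonzero polynomial in the coordinates of the $p(u_j)$ and does not vanish generically. Upper semi-continuity of matrix rank in the entries of $p$ then promotes the specialized identity to the generic one: $\rank C_{d-1}^{d-2}(G',p)\geq |E'|$ at generic $p$, and the reverse inequality is immediate, so $E'$ is independent in ${\cal C}_{d-1,n}^{d-2}$.
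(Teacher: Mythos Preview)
The paper does not prove this theorem; it merely quotes it from Whiteley~\cite[Theorem 11.4.1]{Wsurvey}. Your argument is essentially Whiteley's original proof: specialize $p(v_0)$ to lie on the line through $p(u_0),p(u_1)$ (respectively, at the intersection of the lines $u_0u_1$ and $u_2u_3$), use the homogeneity of $D_{i,j}$ to recover the deleted row(s) as linear combinations of the new rows, count the remaining independent directions in the $v_0$-columns via a Vandermonde determinant in the slopes, and pass to generic $p$ by semi-continuity of rank. The linear-algebra bookkeeping you describe is correct; in particular, the key point that the span of the rows of $E'$ contains all rows of $E$ and simultaneously has a $d$-dimensional image in the $v_0$-columns forces $\rank \geq |E|+d=|E'|$.

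Two small remarks. First, the correct term is \emph{lower} semi-continuity of rank (the set where a minor is nonzero is Zariski open), though your conclusion is the right one. Second, in the X-replacement case you should note explicitly that since $p(v_0)$ is a rational function of $p(u_0),\dots,p(u_3)$, the slopes from $v_0$ to $u_4,\dots,u_{d+1}$ are still generically distinct from each other and from the two line directions; this is what makes the Vandermonde argument go through even though $p(v_0)$ is no longer free.
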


Theorem \ref{thm:construction_cofactor} is an important ingredient  in our proof of Theorem \ref{thm:cofactor}. The fact that the generic $3$-dimensional rigidity matroid  is not known to have the $3$-dimensional X-replacement property is a major barrier to applying the same proof technique to solve Graver's Maximality Conjecture.

\section{Combinatorial characterization}\label{sec:char}
We saw in the last section that ${\cal C}_{2,n}^1$ is the unique maximal abstract 3-rigidity matroid on $E(K_n)$.
In this section, we obtain a 
co-NP type characterization of its rank function.
Since ${\cal C}_{2,n}^1$ is the free matroid on $E(K_n)$ when $n\leq 4$, we assume throughout this section that $n\geq 5$.

Theorem \ref{thm:hang} implies that the rank 10 truncation of any abstract 3-rigidity matroid on $E(K_n)$ is  the  matroid $R_{n}$ on $E(K_n)$ of rank $10$ in which the non-spanning circuits are the edge sets of the copies of $K_5$ in $K_n$. We can use this observation, together with other 
results from  Sections~\ref{sec:c-matroid} and  \ref{sec:rigidity} to derive two fundamental properties of ${\cal C}^1_{2,n}$.

\begin{theorem}\label{thm:K5free}
The generic $C_2^1$-cofactor matroid ${\cal C}_{2,n}^1$ is the free elevation of $R_{n}$.
\end{theorem}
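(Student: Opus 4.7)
The plan is to combine Lemma~\ref{lem:unique} with Theorem~\ref{thm:cofactor}. Let $N$ denote the free elevation of $R_n$, and let ${\cal C}_0$ be the family of non-spanning circuits of $R_n$, i.e.\ the family of edge sets of copies of $K_5$ in $K_n$.

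First I would verify that $N$ is a $K_5$-matroid. By the discussion in Section~\ref{sec:c-matroid}, every matroid obtained from $R_n$ by a sequence of erections is a ${\cal C}_0$-matroid: a non-spanning circuit $C$ of $R_n$ remains a circuit in any erection $N'$ of $R_n$ because the truncation relation gives $r_{N'}(X)=r_{R_n}(X)$ on all subsets of rank below $r_{R_n}(E)$, and every $X\subsetneq C$ satisfies $|X|<r_{R_n}(E)$ (since $C$ is non-spanning). Hence $C$ is dependent of rank $|C|-1$ in $N'$ with every proper subset independent, i.e.\ a circuit in $N'$. Iterating this along the sequence of free erections that produce $N$, we conclude that $N$ is a $K_5$-matroid on $E(K_n)$.

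Next I would invoke Lemma~\ref{lem:unique} with $M_0=R_n$ and ${\cal C}_0$ as above: the free elevation $N$ is maximal in the weak order on the poset of all ${\cal C}_0$-matroids on $E(K_n)$, that is, on the poset of all $K_5$-matroids on $E(K_n)$. Theorem~\ref{thm:cofactor} asserts that ${\cal C}^1_{2,n}$ is the \emph{unique} maximal $K_5$-matroid on $E(K_n)$; in particular every $K_5$-matroid satisfies $M\preceq {\cal C}^1_{2,n}$. Applied to $N$, this gives $N\preceq {\cal C}^1_{2,n}$, and combining with the maximality of $N$ forces $N={\cal C}^1_{2,n}$.

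There is no real technical obstacle in this particular proof; the heavy combinatorial work is already concentrated in Theorem~\ref{thm:cofactor}, proved in the first paper of this series, and the structural Lemma~\ref{lem:unique}. The subtle point one must not skip is the verification that $N$ is indeed a $K_5$-matroid (so that Theorem~\ref{thm:cofactor} can be applied to it); this is the only place where the definition of erection and the fact that non-spanning circuits survive the truncation back to $R_n$ play a role. The real payoff of Theorem~\ref{thm:K5free} comes afterwards, since it lets us bring the general rank upper bound of Lemma~\ref{lem:rank1} and the cyclic-flat covering property of Lemma~\ref{lem:covering} to bear on ${\cal C}^1_{2,n}$ in the subsequent derivation of the main rank formula.
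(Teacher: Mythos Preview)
Your proposal is correct and follows essentially the same route as the paper: combine Theorem~\ref{thm:cofactor} (uniqueness of the maximal $K_5$-matroid) with Lemma~\ref{lem:unique} (the free elevation is a maximal ${\cal C}_0$-matroid) to force equality. Your separate verification that the free elevation is a $K_5$-matroid is already subsumed by the conclusion of Lemma~\ref{lem:unique}, so while not wrong, it is redundant.
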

\begin{proof}
Theorem~\ref{thm:cofactor} tells us that the generic cofactor matroid ${\cal C}^1_{2,n}$ is the unique maximal $K_5$-matroid on $E(K_n)$.
Lemma~\ref{lem:unique} now implies that
${\cal C}^1_{2,n}$ is the free elevation of $R_{n}$.
\end{proof}

\begin{corollary}\label{cor:maximal}
Every cyclic flat of ${\cal C}^1_{2,n}$ is the union of copies of $K_5$.
\end{corollary}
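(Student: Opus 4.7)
The plan is to apply Lemma~\ref{lem:covering} directly, with $M = R_n$, with $\mathcal{C}$ the family of edge sets of copies of $K_5$ in $K_n$, and with $N = \mathcal{C}^1_{2,n}$. Theorem~\ref{thm:K5free} supplies the input that $\mathcal{C}^1_{2,n}$ is the free elevation of $R_n$, and by definition $\mathcal{C}$ is the family of non-spanning circuits of $R_n$. So the only remaining task is to verify the hypothesis of Lemma~\ref{lem:covering}: every cyclic flat of $R_n$ is a union of members of $\mathcal{C}$.

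To verify this hypothesis, let $F$ be a cyclic flat of $R_n$ and split into two cases according to the rank of $F$. If $r_{R_n}(F) \leq 9$, then $F$ is non-spanning in $R_n$; since $R_n$ has rank $10$, every spanning circuit of $R_n$ has rank $10$, so $F$ cannot contain any spanning circuit. Being cyclic, $F$ is then a union of non-spanning circuits of $R_n$, i.e., a union of copies of $K_5$. If $r_{R_n}(F) = 10$, then $F$ is a flat of rank equal to the rank of the ambient matroid, so $F = E(K_n)$. Since $n \geq 5$, every edge of $K_n$ lies in at least one $K_5$-subgraph, and therefore $F$ is the union of all copies of $K_5$ in $K_n$. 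Either way $F$ is a union of members of $\mathcal{C}$.

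This verifies the hypothesis of Lemma~\ref{lem:covering}, which then tells us that every cyclic flat of $\mathcal{C}^1_{2,n}$ is also a union of copies of $K_5$, as claimed. The only step requiring any thought is the case split above, and this is essentially just using that the non-spanning circuits of a rank-$k$ matroid are disjoint from its spanning flats of rank $k$. There is no real obstacle to the proof — the corollary is an immediate consequence of the machinery developed in Sections~\ref{sec:erections} and~\ref{sec:c-matroid}, combined with Theorem~\ref{thm:K5free}.
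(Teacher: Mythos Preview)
Your proof is correct and follows essentially the same approach as the paper: both apply Lemma~\ref{lem:covering} via Theorem~\ref{thm:K5free}, after checking that every cyclic flat of $R_n$ is a union of copies of $K_5$. The paper states this last fact more tersely (noting that each non-spanning cyclic flat of $R_n$ is actually a single $K_5$, and that $E(K_n)$ is covered by $K_5$'s), while you give the explicit case split on rank; your version is slightly more detailed but the argument is the same.
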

\begin{proof}
This follows from Theorem \ref{thm:K5free},
 Lemma~\ref{lem:covering}, and the facts that each non-spanning cyclic flat of $R_n$ is a copy of $K_5$ and  $E(K_n)$ can be covered by copies of $K_5$.
\end{proof}

\subsection{Matroids  on the edge set of a graph}
Our derivation of the rank function of ${\cal C}^1_{2,n}$ uses the property that each of its cyclic flats  has a base which induces a subgraph of $K_n$ of minimum degree at most four.  We will deduce this property from the more general results given in this subsection.
We first need to introduce the matroidal concepts of connectivity and ear decomposition.

A matroid ${M}$ with ground set $E$ is said to be {\em connected} if, for every pair $e, f\in E$ with $e\neq f$, $M$ has a circuit containing both $e$ and $f$.
A set $X\subseteq E$ is said to be {\em connected} if $M|_X$ is connected. We can define an equivalence relation on $X$ by saying that $e_1,e_2\in X$ are related if either $e_1=e_2$ or there is a circuit of $M$ in $X$ which contains both $e_1$ and $e_2$.
The corresponding equivalence classes $X_1,X_2\ldots, X_k$ are the maximal connected subsets of $X$. We will refer to these equivalence classes as the {\em connected components} of $X$ in $M$. We have $X_i\cap X_j=\emptyset$ for all $1\leq i<j\leq k$ and $r(X)=\sum_{i=1}^k r(X_i)$.

A {\em partial ear decomposition} of   ${M}$ is a sequence $(C_1, C_2\dots, C_t)$ of circuits of $M$ such that, for all $2\leq i\leq t$:
\begin{itemize}
\item $C_i\cap C_{\leq i-1}\neq \emptyset$;
\item $C_i\setminus C_{\leq i-1}\neq \emptyset$;
\item if some circuit $C$ of $M$ satisfies $C\cap C_{\leq i-1}\neq \emptyset\neq C\setminus C_{\leq i-1}$ then
$C\setminus C_{\leq i-1}$ is not a proper subset of  $C_i\setminus C_{\leq i-1}$.
\end{itemize}
An {\em ear decomposition} of  $M$ is a partial ear decomposition with the additional property  that $C_{\leq t}=E$. More generally,
an {\em ear decomposition} of a set $X\subseteq E$ is a partial ear decomposition with the additional property  that $C_{\leq t}=X$. Ear decompositions of matroids were introduced by Coullard and Hellerstein \cite{CH}. They showed:

\begin{theorem}\label{thm:ear}
A  matroid $M$ with at least two elements is connected if and only if it has an ear decomposition. Furthermore, if
$M$ is connected, then any partial ear decomposition of $M$ can be extended to an ear decomposition of $M$.
\end{theorem}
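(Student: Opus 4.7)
The plan is to prove the two directions separately.

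For the forward direction, suppose $M$ has an ear decomposition $(C_1,\dots,C_t)$. I would show by induction on $i$ that $C_{\leq i}$ is a connected subset of the ground set, meaning that any two of its elements lie in a common circuit of $M$. The base case $i=1$ is immediate because $C_1$ is itself a circuit. For the inductive step, $C_{\leq i}=C_{\leq i-1}\cup C_i$, where $C_{\leq i-1}$ is connected by the induction hypothesis, $C_i$ is a circuit (hence connected), and $C_i\cap C_{\leq i-1}\neq\emptyset$ by the first defining property of a partial ear decomposition. The fact that a union of two connected subsets sharing a common element is connected is a standard consequence of the (strong) circuit elimination axiom. Since $C_{\leq t}=E$, $M$ is connected.

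For the converse and the ``furthermore'' statement, assume $M$ is connected with $|E(M)|\geq 2$. It suffices to prove the extension claim: any partial ear decomposition $(C_1,\dots,C_t)$ with $C_{\leq t}\neq E$ can be extended by appending one more circuit $C_{t+1}$. Indeed, a starting partial ear decomposition of length one exists by picking any two distinct elements and using connectedness to find a circuit containing both; iterated extension must terminate with $C_{\leq t}=E$ because $|C_{\leq i}|$ strictly increases with $i$ while $E$ is finite.

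To perform the extension when $C_{\leq t}\neq E$, pick any $e\in C_{\leq t}$ and $f\in E\setminus C_{\leq t}$; by connectedness there is a circuit of $M$ containing both, so the family
\[
\mathcal{F}=\{C:C\text{ is a circuit of }M,\ C\cap C_{\leq t}\neq\emptyset,\ C\setminus C_{\leq t}\neq\emptyset\}
\]
is non-empty. I would choose $C_{t+1}\in\mathcal{F}$ so that $C_{t+1}\setminus C_{\leq t}$ is inclusion-minimal among the sets $\{C\setminus C_{\leq t}:C\in\mathcal{F}\}$. The first two defining properties of a partial ear decomposition for $(C_1,\dots,C_t,C_{t+1})$ hold because $C_{t+1}\in\mathcal{F}$, and the third (minimality) property is immediate from the choice of $C_{t+1}$.

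The main technical point is recognizing that the third defining property of a partial ear decomposition translates exactly to choosing a $C_{t+1}\in\mathcal{F}$ whose ``new part'' $C_{t+1}\setminus C_{\leq t}$ is inclusion-minimal; once this correspondence is spotted, the construction is immediate. Connectedness of $M$ enters only to guarantee that $\mathcal{F}$ is non-empty whenever $C_{\leq t}\neq E$. A secondary standard fact, used in the forward direction, is that unions of connected matroid subsets sharing an element remain connected.
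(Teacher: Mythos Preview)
Your argument is correct. The forward direction is handled cleanly via the standard fact that the relation ``lie in a common circuit'' is an equivalence relation, and the extension step for the converse is exactly right: the third condition in the definition of a partial ear decomposition is precisely the inclusion-minimality of $C_{t+1}\setminus C_{\leq t}$ among circuits in $\mathcal{F}$, so choosing any minimal member of $\{C\setminus C_{\leq t}:C\in\mathcal{F}\}$ works. The only small point worth making explicit is that appending $C_{t+1}$ does not disturb the third condition for indices $i\leq t$, since that condition involves only $C_1,\dots,C_i$; you implicitly rely on this but do not state it.

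As for comparison with the paper: the paper does not prove Theorem~\ref{thm:ear} at all. It is quoted as a result of Coullard and Hellerstein~\cite{CH} and used as a black box. So you have supplied a self-contained proof where the paper only gives a citation; your argument is essentially the standard one and matches what one finds in the original source.
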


We will use an ear decomposition of a connected set $X$ in a matroid $M$ defined on the edge set of a graph $G$ to show that, subject to a mild condition on the circuits of $M$, some base of $X$ induces a subgraph of $G$ of low minimum degree.  More precisely, we use induction on the number of circuits in the ear decomposition
to show that the average minimum degree over all bases of $X$ is at most $2\frac{r(X)+1}{|V(X)|}-1$. For the inductive step we will need the following result which follows easily from the definition of an ear decomposition.  

\begin{lemma}\label{lem:ear} Let $(C_1,C_2,\ldots,C_t)$ be a partial  ear decomposition in a  matroid $M
$. Then $r(C_{\leq i})=r(C_{\leq i-1})+|C_i\setminus C_{\leq i-1}|-1$ for all $2\leq i\leq t$.
Furthermore, if $B_{i-1}$ is any base of $C_{\leq i-1}$ and $Y$ is any subset of $C_i\setminus C_{\leq i-1}$ of size $|C_i\setminus C_{\leq i-1}|-1$ then $B_i=B_{i-1}\cup Y$ is a base of $C_{\leq i}$.
\end{lemma}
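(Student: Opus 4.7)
The plan is to prove the rank identity and the ``furthermore'' clause in parallel, by establishing an upper bound on $r(C_{\leq i})$ via submodularity and a matching lower bound via the independence of the candidate base $B_i$.

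For the upper bound, I would apply submodularity to $C_{\leq i-1}$ and $C_i$:
\[
r(C_{\leq i}) \;\leq\; r(C_{\leq i-1}) + r(C_i) - r(C_{\leq i-1}\cap C_i).
\]
Since $C_i$ is a circuit, $r(C_i)=|C_i|-1$. The second bullet in the definition of a partial ear decomposition gives $C_i\setminus C_{\leq i-1}\neq\emptyset$, so $C_{\leq i-1}\cap C_i$ is a proper subset of the circuit $C_i$ and is therefore independent, i.e.\ $r(C_{\leq i-1}\cap C_i)=|C_{\leq i-1}\cap C_i|$. Substituting yields $r(C_{\leq i})\leq r(C_{\leq i-1})+|C_i\setminus C_{\leq i-1}|-1$.

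For the lower bound, I would fix an arbitrary base $B_{i-1}$ of $C_{\leq i-1}$ and an arbitrary $Y\subseteq C_i\setminus C_{\leq i-1}$ with $|Y|=|C_i\setminus C_{\leq i-1}|-1$, and show that $B_i=B_{i-1}\cup Y$ is independent. This set lies in $C_{\leq i}$ and has size $r(C_{\leq i-1})+|C_i\setminus C_{\leq i-1}|-1$, so its independence simultaneously gives the matching lower bound on $r(C_{\leq i})$ and shows that $B_i$ is a base of $C_{\leq i}$.

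To establish independence, I would argue by contradiction: suppose $B_i$ contains a circuit $C$. Because $B_{i-1}$ is independent, $C$ must meet $Y$, so $C\setminus C_{\leq i-1}\neq\emptyset$, and in fact $C\setminus C_{\leq i-1}\subseteq Y$. I then split into two cases. If $C\cap C_{\leq i-1}=\emptyset$, then $C\subseteq Y$, and since $|Y|<|C_i\setminus C_{\leq i-1}|\leq|C_i|$ we get $C\subsetneq C_i$ with $C$ and $C_i$ both circuits, contradicting circuit minimality. If $C\cap C_{\leq i-1}\neq\emptyset$, then $C$ meets the hypothesis of the third bullet of the definition of a partial ear decomposition, so $C\setminus C_{\leq i-1}$ should \emph{not} be a proper subset of $C_i\setminus C_{\leq i-1}$; but $C\setminus C_{\leq i-1}\subseteq Y$ and $|Y|=|C_i\setminus C_{\leq i-1}|-1$ force exactly such a proper containment, a contradiction.

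I do not anticipate a real obstacle here: the only subtleties are to remember that a proper subset of a circuit is independent (used for the submodularity step) and that no circuit can be a proper subset of another (used in the first case of the independence argument). The third bullet of the ear decomposition axiom is tailored precisely to rule out the second case, so the proof essentially unpacks the definitions.
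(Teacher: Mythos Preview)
Your proposal is correct. The paper does not actually give a proof of this lemma, remarking only that it ``follows easily from the definition of an ear decomposition''; your argument is precisely the direct verification the authors omit, using submodularity for the upper bound and the third bullet of the ear decomposition axiom to rule out a circuit in $B_i$.
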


\begin{lemma}\label{lem:degree}
Let $M$ be a matroid defined on the edge set of a graph $G=(V,E)$ and $X\subseteq E$ be a connected set  in  $M$. 
Suppose that every circuit of $M$ induces a $2$-connected subgraph of $G$.  Then
$$
f(X):=\sum_{v\in V(X)}\min\{d_B(v):\mbox{$B$ is a base of $X$}\}\leq 2(r(X)+1)-|V(X)|.$$
\end{lemma}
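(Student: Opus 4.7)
The plan is to induct on the length $t$ of an ear decomposition $(C_1,\ldots,C_t)$ of $X$, which exists because $X$ is connected in $M$ (Theorem~\ref{thm:ear}). In the base case $t=1$, $X=C_1$ is a single circuit; by hypothesis the subgraph induced by $C_1$ is $2$-connected, so $d_{C_1}(v)\geq 2$ for every $v\in V(X)$, and since bases of $C_1$ are exactly the sets $C_1-e$, the minimum of $d_B(v)$ over bases is $d_{C_1}(v)-1$ (remove an edge at $v$). Summing gives $f(X)=2|C_1|-|V(C_1)|=2(r(X)+1)-|V(X)|$, with equality.

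For the inductive step, set $X'=C_{\leq t-1}$ and $D=C_t\setminus X'$. I would first observe that $(C_1,\ldots,C_{t-1})$ is an ear decomposition of $X'$ (the three defining conditions are inherited because every circuit of $M|_{X'}$ is a circuit of $M$), so $X'$ is connected in $M$ and induction bounds $f(X')$. By Lemma~\ref{lem:ear}, $r(X)=r(X')+|D|-1$, and $B'\cup D'$ is a base of $X$ for any base $B'$ of $X'$ and any $D'\subseteq D$ with $|D'|=|D|-1$. This freedom lets me minimize $d_B(v)$ for each $v\in V(X)$ independently. For $v\in V(X)\setminus V(X')$, every $C_t$-edge at $v$ lies in $D$ (an edge in $C_t\cap X'$ has both endpoints in $V(X')$) and $d_{C_t}(v)\geq 2$, so dropping a $D$-edge at $v$ gives $\min d_B(v)=d_{C_t}(v)-1$. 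For $v\in V(X')\setminus V(D)$, $\min d_B(v)=\min_{B'}d_{B'}(v)$. For $v\in S:=V(X')\cap V(D)$, choosing $B'$ minimally and removing a $D$-edge at $v$ gives $\min d_B(v)=\min_{B'}d_{B'}(v)+d_D(v)-1$. Summing, and using that $V(D)$ is the disjoint union of $S$ and $V(X)\setminus V(X')$ together with $\sum_{v\in V(D)}d_D(v)=2|D|$, the inductive bound on $f(X')$ simplifies to
\[
f(X)\leq f(X')+2|D|-|V(X)\setminus V(X')|-|S|\leq 2(r(X)+1)-|V(X)|+(2-|S|).
\]

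The argument thus reduces to proving $|S|\geq 2$, and this is the step where $2$-connectivity of $C_t$ is indispensable. The edges of $C_t$ are partitioned into the nonempty sets $C_t\cap X'$ (nonempty by the first ear-decomposition axiom) and $D$, so if $V(C_t\cap X')\cap V(D)$ had at most one vertex then $C_t$ would either be disconnected or have a cut vertex, contradicting $2$-connectivity. Since $V(C_t\cap X')\subseteq V(X')$, these shared vertices lie in $S$, giving $|S|\geq 2$ and closing the induction. The main delicate point I anticipate is being careful with the pointwise-minimum interpretation of $f$: different vertices $v$ may be minimized by different bases $B$, so one is adding three kinds of minima rather than minimizing a single sum, and this is what makes the ear-decomposition bookkeeping tight.
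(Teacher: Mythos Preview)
Your proof is correct and follows essentially the same approach as the paper's: both induct via an ear decomposition, use Lemma~\ref{lem:ear} to build bases of $X$ from a base of $C_{\leq t-1}$ together with $|D|-1$ edges of $D$, bound the contribution of each vertex type to $f(X)$, and finish by invoking the $2$-connectivity of $C_t$ to obtain $|S|\geq 2$ (written as $|V(Z)\cap V_1|\geq 2$ in the paper). The only small omission is that Theorem~\ref{thm:ear} requires $|X|\geq 2$, so the single-coloop case $|X|=1$ should be handled separately as a base case (as the paper does); also, your displayed equalities for $\min d_B(v)$ are really only the upper bounds you need, since bases of $X$ need not all be of the form $B'\cup D'$.
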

\begin{proof}
We use induction on $|X|$. The lemma holds when $X$ is a coloop since we have $r(X)=|X|=1$ and $|V(X)|\leq 2$. Hence we may suppose that $|X|\geq 2$ so $X$ is a cyclic set in $M$.  Let $n=|V(X)|$.

We first consider the case when $X$ is a circuit of $M$. Then $r(X)=|X|-1$. For each $v\in V(X)$, we can construct a base of $X$ which contains $d_X(v)-1$ edges incident to $v$, so the contribution of $v$ to $f(X)$ is $d_X(v)-1$. Hence
$$f(X)=\sum_{v\in V(X)}(d_X(v)-1)=2|X|-n=2(r(X)+1)-n
$$
as required. Thus we may assume that $X$ is not a circuit.

Let $(C_1,C_2,\ldots,C_m)$ be an ear decomposition of $X$ and put $Z=C_m\sm C_{\leq m-1}$. By Lemma~\ref{lem:ear}, $r(X)=r(C_{\leq m-1})+|Z|-1$ holds and, for  every base $B'$ of $C_{\leq m-1}$ and every  subset $Y$ of $Z$ of cardinality $|Z|-1$, $B=B'\cup Y$ is a base of $X$.

We will obtain an upper bound on $f(X)$ by considering the separate contributions of the vertices in  $V_1=V( C_{\leq m-1})$ and $V_2=V(X)\sm V_1$. To this end, we let $n_1=|V_1|$, $n_2=|V_2|$, $Z_1$ and $Z_2$ be the sets of edges in $Z$ which belong to the subgraphs of $X$ induced by $V_1$ and $V_2$, respectively, and put $Z_{1,2}=Z\sm (Z_1\cup Z_2)$.

We first consider the contribution of the vertices in $V_1$ to $f(X)$. 
By Theorem~\ref{thm:ear} and Lemma \ref{lem:ear}, $C_{\leq m-1}$ is connected in $M$ and $r(C_{\leq m-1})= r(X)-|Z|+1$. Hence, by induction,
\begin{equation}\label{eq:degree1}
f(C_{\leq m-1})\leq 2(r(X)-|Z|+2)-n_1.
\end{equation}
For each $v\in V_1$ which is incident to an edge of $Z$ and each base $B'$ of $C_{\leq m-1}$,  we can construct a base $B$ of $X$ with $d_B(v)=d_{B'}(v)+d_{Z}(v)-1$. This implies that  the contribution of the vertices of $V_1$ to $f(X)$ is at most
\begin{equation}\label{eq:degree2}
f(C_{\leq m-1})+2|Z_1|+|Z_{1,2}|-|V(Z)\cap V_1|.
\end{equation}

We next consider the contribution of the vertices of $V_2$ to $f(X)$. Choose $v\in V_2$. Then $v$ is only incident to edges of $Z$ in $X$. Since we can construct a base of $X$ which contains $d_X(v)-1$ edges incident to $v$, the contribution of $v$ to $f(X)$ is $d_X(v)-1$. So the total contribution of the vertices of $V_2$ to $f(X)$ is
\begin{equation}\label{eq:degree3}
 \sum_{v\in V_2}(d_X(v)-1)=2|Z_2|+|Z_{1,2}|-n_2.
\end{equation}
Note that the previous sentence remains valid when $V_2=\emptyset$ since the contribution given by (\ref{eq:degree3}) is zero in this case.

 We may now combine   (\ref{eq:degree1}), (\ref{eq:degree2}) and (\ref{eq:degree3}) with $|Z|=|Z_1|+|Z_2|+|Z_{12}|$ and $n=n_1+n_2$ to obtain
\begin{eqnarray*}
f(X)&\leq& 2(r(X)-|Z|+2)-n_1
+2|Z_1|+|Z_{1,2}|-|V(Z)\cap V_1|+
2|Z_2|+|Z_{1,2}|-n_2\\
&=&2r(X)-|V(Z)\cap V_1|-n+4. 
\end{eqnarray*}
Finally we use the hypothesis that the subgraph of $G$ induced by a circuit 
is 2-connected. 
This  and $C_m\cap C_{\leq m-1}\neq \emptyset$ imply that
$|V(Z)\cap V_1|\geq 2$,
and we obtain $f(X)\leq  2(r(X)+1)-n$ as required.
\end{proof}

Suppose $M$ is a $K_{d+2}$-matroid with $d\geq 2$. Then Lemma \ref{lem:upper_bound} implies that $r_M(X)\leq d|V(X)|-{{d+1}\choose{2}}$ for any connected set $X$ in $M$ and we can now use  Lemma \ref{lem:degree} to deduce that $X$ has a base of minimum degree at most $2d-2$. We next extend this observation to the case when $M$ is an abstract $d$-rigidity matroid and $X$ is cyclic but not necessarily connected.

\begin{lemma}\label{lem:cyclicdegree}
Suppose $M$ is an abstract $d$-rigidity matroid defined on $E(K_n)$ for some $d\geq 2$ and $X$ is a nonempty cyclic set in $M$.
Then every base of $X$ has minimum degree at least $d$, and some base of $X$ has minimum degree at most $2d-2$.
\end{lemma}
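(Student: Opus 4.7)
My plan is to first establish two structural facts about the circuits of any abstract $d$-rigidity matroid $M$ (for $d\geq 2$), both via direct applications of axiom (A1): (i)~every circuit $C$ of $M$ has $d_C(w)\geq d$ for all $w\in V(C)$, and (ii)~the subgraph of $K_n$ induced by $C$ is 2-connected. For (i), suppose $d_C(w)\leq d-1$, let $E_1$ be the set of edges of $C$ at $w$ and $E_2=C\sm E_1$, so $|V(E_1)\cap V(E_2)|\leq d-1$. For any $e=ww'\in E_1$, applying (A1) to $E_1-e$ and $E_2$ gives $\cl(C-e)\subseteq K(V(E_1-e))\cup K(V(E_2))$; but $e$ lies in neither of these (as $w'\notin V(E_1-e)$ and $w\notin V(E_2)$), contradicting $e\in \cl(C-e)$. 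Fact~(ii) follows by a similar case analysis for a cut vertex or a disconnection of $C$, using~(i) to ensure endpoints of candidate edges remain in the relevant complete graph after removal.

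For part~1, suppose for contradiction that some base $B$ of $X$ has $v\in V(B)$ with $d_B(v)\leq d-1$. Partitioning $B$ at $v$ into $E_1$ (edges at $v$) and $E_2=B\sm E_1$ and applying (A1) yields $X\subseteq \cl(B)\subseteq K(V(E_1))\cup K(V(E_2))$; since $v\notin V(E_2)$, every edge of $X$ incident to $v$ must lie in $K(V(E_1))$, forcing $N_X(v)\subseteq N_B(v)$ and hence $d_X(v)\leq d-1$. Since $X$ is cyclic and $v\in V(X)$, some circuit $C\subseteq X$ of $M$ contains an edge at $v$, giving $d_C(v)\leq d_X(v)\leq d-1$ in contradiction with fact~(i).

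For part~2, I first treat the case where $X$ is matroid-connected. By~(ii), Lemma~\ref{lem:degree} applies and yields
\[
\sum_{v\in V(X)}\min\{d_B(v):B\text{ a base of }X\}\leq 2(r(X)+1)-|V(X)|.
\]
Combining with the rank bound $r(X)\leq d|V(X)|-\binom{d+1}{2}$ from Lemma~\ref{lem:upper_bound}, the sum is at most $(2d-1)|V(X)|-2(\binom{d+1}{2}-1)$, which is strictly less than $(2d-1)|V(X)|$ since $\binom{d+1}{2}\geq 3$ for $d\geq 2$. By averaging, some $v\in V(X)$ has $\min_B d_B(v)\leq 2d-2$, and any base of $X$ achieving this minimum has minimum degree at most $2d-2$.

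For the general case I decompose $X$ into its matroid-connected components $X_1,\ldots,X_k$ (each cyclic) and apply the connected case to each, obtaining at least $d+2$ vertices in every $V(X_i)$ whose minimum degree over bases $B_i$ of $X_i$ is at most $2d-2$. Combining this with part~1---which forces each contribution $d_{B_j}(v)$ to lie in $\{0\}\cup[d,\infty)$---and with a bound on vertex overlap between distinct matroid-components (derivable from iterated applications of (A1)), I would select $i$ and $v$ so that the base $B=B_i\cup\bigcup_{j\neq i}B_j'$ of $X$ satisfies $d_B(v)\leq 2d-2$. The main obstacle is this last step: one must control how the vertex sets $V(X_j)$ for $j\neq i$ meet $V(X_i)$ so that the extra degree contributions at the candidate vertex $v$ do not inflate $d_B(v)$ beyond $2d-2$, and this requires careful combinatorial accounting together with the structural constraints on matroid-components of a cyclic set in an abstract $d$-rigidity matroid.
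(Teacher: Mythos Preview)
Your arguments for Part~1 (via (A1) directly, rather than the paper's appeal to the 0-extension property) and for the connected case of Part~2 are correct and match the paper's approach in substance. Your fact~(ii) is exactly the hypothesis needed to invoke Lemma~\ref{lem:degree}, and your count that each connected component $X_i$ contains at least $d+2$ vertices attaining minimum degree $\leq 2d-2$ in some base of $X_i$ is also what the paper establishes.

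The genuine gap is the disconnected case, as you yourself flag. Your proposed route---bounding $|V(X_i)\cap V(X_j)|$ via iterated applications of (A1)---does not work: axiom (A1) constrains closures when the vertex overlap is \emph{small}, but gives no upper bound on the overlap itself, and distinct matroid-components of a cyclic set in an abstract $d$-rigidity matroid can share arbitrarily many vertices. The paper never attempts to bound overlaps. Instead it sets $U=\{v\in V(X):v\text{ lies in at least two }V(X_i)\}$, $a(v)$ the number of components containing $v$, and $b(X_i)=|U\cap V(X_i)|$, then splits into two cases. If some $b(X_i)\leq d+1$, one of the $\geq d+2$ ``good'' vertices of $X_i$ lies outside $U$, and extending a suitable base of $X_i$ to a base of $X$ finishes immediately---this is the situation your sketch anticipates. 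If every $b(X_i)\geq d+2$, the paper instead runs a global double-count: using $f(X)=\sum_i f(X_i)$, the identity $\sum_i |V(X_i)|=|V(X)|+\sum_{u\in U}(a(u)-1)$, and the inequalities $\sum_{u\in U} a(u)\geq 2|U|$ and $\sum_{u\in U} a(u)=\sum_i b(X_i)\geq (d+2)q$, one derives $(2d-1)|V(X)|-f(X)>0$ directly, without ever selecting a particular component or vertex. This case split---and in particular the global counting in the second case, which bypasses the overlap issue entirely---is the missing idea; without it (or something equivalent), the argument cannot be closed from the observations you list.
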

\begin{proof}
Since $M$ is an  abstract $d$-rigidity matroid, the 0-extension property holds for $M$ and hence each circuit of $M$ induces a subgraph of minimum degree at least $d+1$.  Since $X$ is cyclic, this implies that each vertex of $K_n[X]$ has degree at least $d+1$. 

Suppose $d_B(v)\leq d-1$ for some base $B$ of $X$ and some $v\in V(X)$. Then we can use the 0-extension property to extend $B-v$ to an independent set of size $|B|+1$ in $X$ by adding $d$ edges incident to $v$, contradicting the fact that $B$ is a base of $X$. Hence  $d_B(v)\geq d$ for all bases $B$ of $X$ and all $v\in V(X)$.


It remains to show that some base of $X$ has minimum degree at most $2d-2$.
Let $X_1,X_2\ldots,X_q$ be the connected components of $X$ in $M$. Then each $X_i$ is a cyclic set in $M$, $X_i\cap X_j=\emptyset$ for $i\neq j$ and, for any set of bases $B_i$ of $X_i$, $B=\bigcup_{i=1}^q B_i$ is a base of $X$. For $v\in V(X)$, let $a(v)$ be the number of components $X_i$ for which $v\in V(X_i)$. Let $U=\{v\in V(X):a(v)\geq 2\}$ and let $b(X_i)=|U\cap V(X_i)|$ for all $1\leq i\leq q$. Let $n=|V(X)|$ and $n_i=|V(X_i)|$ for all $1\leq i\leq q$. Then 
$\sum_{i=1}^q n_i=n+\sum_{u\in U}(a(u)-1)$ and $\sum_{u\in U}a(u)=\sum_{i=1}^q b(X_i)$. We may apply Lemmas~\ref{lem:upper_bound} and \ref{lem:degree} to each $X_i$ to deduce that
\begin{equation}\label{eq:cyclicdegree}
f(X_i)\leq (2d-1)n_i-(d-1)(d+2).
\end{equation}

Suppose that $b(X_i)\leq d+1$ for some $1\leq i\leq q$. We can use the argument in the first paragraph to deduce that $d_{B_i}(v)\geq d$ for all bases $B_i$ of $X_i$ and all $v\in V(X_i)$.  Together with (\ref{eq:cyclicdegree}) and the hypothesis that $d\geq 2$, this implies that  there are at least $d+2$ vertices $v\in V(X_i)$ such that $v$ has degree at most $2d-2$ in some base of $X_i$. Since $b(X_i)\leq d+1$, we can choose $v\in V(X_i)\setminus U$ such that $v$ has degree at most $2d-2$ in some base $B_i$ of $X_i$. We can now extend $B_i$ to a base $B$ of $X$. Since $v\not\in U$, $v$ will have degree at most $2d-2$ in $B$. 

Hence we may assume that $b(X_i)\geq d+2$ for all $1\leq i\leq q$.
 Since the union of any collection of bases of the $X_i$ gives a base of $X$,  $f(X)=\sum_{i=1}^q f(X_i)$. We may now use Lemma~\ref{lem:degree},
 (\ref{eq:cyclicdegree}) and the fact that $\sum_{i=1}^qn_i=n+\sum_{u\in U}(a(u)-1)$ to deduce that

\begin{eqnarray*}
(2d-1)n-f(X)&=&(2d-1)n-\sum_{i=1}^q f(X_i)\\
&=&
\sum_{i=1}^q((2d-1)n_i- f(X_i))-(2d-1)\sum_{u\in U}(a(u)-1)\\
&\geq &2\sum_{i=1}^q (dn_i-r(X_i)-1)-(2d-1)\sum_{u\in U}(a(u)-1)\\
&= &2 (dn-r(X)-q)+\sum_{u\in U}(a(u)-1)\\
&= &2 (dn-r(X))-2q-|U|+\sum_{u\in U}a(u).
\end{eqnarray*}
Since $a(u)\geq 2$ for all $u\in U$ and $b(X_i)\geq d+2\geq 5$ for all $1\leq i\leq q$, we have $\sum_{u\in U}a(u)\geq 2|U|$
and 
$\sum_{u\in U}a(u)=\sum_{i=1}^q b(X_i)\geq 5q$. Since $r(X)<dn$ by Lemma \ref{lem:upper_bound}, we have $(2d-1)n-f(X)>0$ and $f(X)<(2d-1)n$. Hence some base of $X$ has minimum degree at most $2d-2$.
\end{proof}

The bound on the minimum degree of a base in Lemma \ref{lem:cyclicdegree} is best possible since the generic $d$-dimensional rigidity matroid contains circuits of minimum degree $2d-1$ when $d\geq 2$. We will see in the next subsection, however, that a stronger bound holds for cyclic flats in ${\cal C}^1_{2,n}$.

\subsection{\boldmath The rank function of ${\cal C}^1_{2,n}$}
Theorem~\ref{thm:K5free} and Lemma~\ref{lem:rank1} tell us that we can bound the rank of any set $X$ in 
$C_{2,n}^1$ by using proper $K_5$-sequences.
%
%
More precisely, Theorem~\ref{thm:K5free} tells us that  $C_{2,n}^1$ is the free-elevation of the rank 10 matroid on $E(K_n)$ whose non-spanning circuits are the edge sets of all copies of $K_5$, and Lemma~\ref{lem:rank1} now implies  that
the rank of a set $X\subseteq E(K_n)$ in $C_{2,n}^1$ is bounded above by
$|X\cup C_{\leq t}|-t$
for any proper $K_{5}$-sequence  $(C_1,\dots, C_t)$.
Our next result shows that this bound is tight for some proper $K_{5}$-sequence in $K_n$. It also establishes an important structural property of cyclic flats in $C_{2,n}^1$, that they always contain a {\em simplicial vertex} i.e.~a vertex whose neighbor set induces a complete subgraph. Its proof uses a simultaneous induction on both statements.

To simplify notation we put
\[
{\rm val}(F, {\cal C}):=|F\cup C_{\leq t}|-t
\]
for all $F\subseteq E(K_n)$ and  all proper $K_{5}$-sequences  ${\cal C}=(C_1,\dots, C_t)$ contained in $K_n$.

 \begin{theorem}\label{thm:rank}
Suppose $X\subseteq E(K_n)$.\\  (a) The rank of $X$ in 
${\cal C}_{2,n}^1$ is given by
\[
r(X)=\min \{|X\cup C_{\leq t}|-t : \text{$(C_1,\dots, C_t)$ is a proper $K_{5}$-sequence in $K_n$} \}.
\]
(b) If $X$ is a cyclic flat in $C_{2,n}^1$, then there exists a vertex $v\in V(X)$ such that  $K(N_X(v))\subseteq X$, and $v$ has degree three in some base of $X$.
\end{theorem}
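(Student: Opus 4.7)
I proceed by simultaneous induction on $n$, proving both (a) and (b) at level $n$; the base case $n = 5$ is immediate because the only non-trivial cyclic flat of $\mathcal C^1_{2,5}$ is the circuit $E(K_5)$, and both assertions follow by direct inspection.

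For (a), Theorem~\ref{thm:K5free} and Lemma~\ref{lem:rank1} supply the upper bound $r(X) \le \mathrm{val}(X, \mathcal C)$ for every proper $K_5$-sequence $\mathcal C$. For the matching lower bound I first reduce to the case that $X$ is a cyclic flat: let $I$ be the set of coloops of $M|_X$ and put $F := \mathrm{cl}(\mathrm{cyc}_M(X))$; then $F$ is a cyclic flat with $r(F) = r(X) - |I|$ and $I \cap F = \emptyset$. By the equality clause of Lemma~\ref{lem:rank1}, any optimal proper $K_5$-sequence $\mathcal C$ for $F$ must cover $F$ exactly (since the cyclic flat $F$ has no coloops in $M|_F$), and a short calculation then shows that the same $\mathcal C$ satisfies $\mathrm{val}(X, \mathcal C) = |I| + r(F) = r(X)$. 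So assume $X$ is a cyclic flat. Apply (b) at level $n$ to produce $v \in V(X)$ with $K(N_X(v)) \subseteq X$ and a base $B$ of $X$ with $d_B(v) = 3$. The $0$-extension property of abstract $3$-rigidity matroids then yields $r_{\mathcal C^1_{2,n}}(X) = r_{\mathcal C^1_{2,n-1}}(Y) + 3$, where $Y := X \setminus \delta_X(v)$ is viewed as a subset of $E(K_n - v)$. By the induction hypothesis at level $n - 1$ there is an optimal proper $K_5$-sequence $\mathcal C' = (C'_1, \dots, C'_s)$ with $\mathrm{val}(Y, \mathcal C') = r(Y)$. Writing $N_X(v) = \{u_1, \dots, u_m\}$ with $m \ge 4$, set $D_j := K(\{v, u_1, u_2, u_3, u_{j+3}\}) \subseteq X$ for $1 \le j \le m - 3$ and form $\mathcal C := (C'_1, \dots, C'_s, D_1, \dots, D_{m-3})$. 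Each $D_j$ contributes the previously uncovered edge $vu_{j+3}$, so $\mathcal C$ is proper; and since $D_{\le m - 3} \subseteq X$ and $\delta_X(v) \cap C'_{\le s} = \emptyset$, a direct computation gives $\mathrm{val}(X, \mathcal C) = \mathrm{val}(Y, \mathcal C') + m - (m - 3) = r(Y) + 3 = r(X)$, as required.

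Part (b) is the crux and the main obstacle. The raw inputs are Corollary~\ref{cor:maximal} (every cyclic flat of $\mathcal C^1_{2,n}$ is a union of $K_5$'s) and Lemma~\ref{lem:cyclicdegree} (every base of $X$ has minimum degree $\ge 3$, and some base has a vertex of degree $\le 4$). The goal is to produce a single vertex $v$ satisfying both the simpliciality condition $K(N_X(v)) \subseteq X$ and the existence of a base $B$ with $d_B(v) = 3$. The favourable case is when one of the $K_5$'s in the covering of $X$ contains a vertex $v$ lying in no other $K_5$ of the covering: then $N_X(v)$ is precisely the four other vertices of that $K_5$, so $K(N_X(v)) \subseteq X$ is automatic, and the degree condition is then obtained from Lemma~\ref{lem:cyclicdegree}, resorting to the $3$-dimensional X-replacement property of Theorem~\ref{thm:construction_cofactor} for a base-exchange at $v$ if Lemma~\ref{lem:cyclicdegree} initially delivers $d_B(v) = 4$ rather than $3$. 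The principal difficulty is handling the case where no such ``private'' vertex exists (for instance $X = E(K_6)$ is a cyclic flat but each vertex lies in five distinct $K_5$'s of the natural covering, so simpliciality of $v$ must be forced by other edges of $X$, not by membership in a single $K_5$). My plan here is an induction on $|V(X)|$ that peels off a boundary $K_5$ from $X$, applies (b) at level $n - 1$ to the residual cyclic flat, and uses axioms A1 and A2 of abstract $3$-rigidity together with the maximality statement of Theorem~\ref{thm:cofactor} to transfer the resulting simplicial vertex back to $X$. The delicate step throughout will be controlling how closure behaves when a $K_5$ is removed, and this is where I expect the bulk of the technical work to lie.
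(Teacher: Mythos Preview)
Your derivation of (a) from (b) is essentially the paper's argument: once a simplicial vertex $v$ of degree three in some base is in hand, deleting $v$ drops the rank by exactly $3$, and appending $K_5$'s through $v$ to an optimal sequence for $X-v$ does the job. The reduction of general $X$ to a cyclic flat via $F=\cl(\mathrm{cyc}(X))$ is also fine.

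The gap is in (b), precisely in the case you flag as the principal difficulty. Your ``favourable case'' (a vertex $v$ with $d_X(v)=4$) does work, but not via X-replacement: once $K(N_X(v))\subseteq X$ and $|N_X(v)|=4$, you simply extend $K(N_X(v))$ to a base $B'$ of $X-v$ and then to a base $B$ of $X$; since $K(v,N_X(v))$ is a circuit, $B$ can contain at most three edges at $v$. The hard case is when every vertex has $d_X(v)\ge 5$, and here your plan of ``peeling off a boundary $K_5$'' and passing to a residual cyclic flat is not a well-defined operation: removing the edges of a $K_5$ from $X$ need not reduce $|V(X)|$, need not leave a flat, and need not leave a cyclic set, so there is no obvious smaller instance to which (b) at level $n-1$ applies.

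The paper's resolution is structurally different and is the idea you are missing. The induction parameter is not $n$ but the pair $(r(X),-|X|)$ in lexicographic order (packaged as a minimal counterexample). One takes $v$ and $B$ minimising $d_B(v)$; Lemma~\ref{lem:cyclicdegree} gives $d_B(v)\in\{3,4\}$. If $d_B(v)=3$ a short $1$-extension argument forces simpliciality. If $d_B(v)=4$ one does not peel off a $K_5$; instead one finds, via careful base exchanges inside a $K_5$ through $v$ and a single use of the X-replacement property, an edge $e=v_4v_5$ with $v_4,v_5\in N_X(v)$ such that $K(N_X(v))\subseteq\cl(X-v+e)$. Since $r(X-v+e)=r(X)-3$, part (a) applies \emph{inductively} to $\cl(X-v+e)$, yielding an optimal $K_5$-sequence $\mathcal C$. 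Appending $K_5$'s through $v$ to $\mathcal C$ produces a sequence $\mathcal C''$ with $\val(X,\mathcal C'')=r(X)$, and now the equality clause of Lemma~\ref{lem:rank1} forces $C_{\le t+k-3}\subseteq\cl(X)=X$, hence $K(N_X(v))\subseteq X$. Finally, having simpliciality, the base argument above yields $d_B(v)=3$, contradicting minimality. Thus the crucial missing mechanism in your plan is the use of (a) at strictly smaller \emph{rank} inside the proof of (b); an induction purely on $n$ does not naturally supply this, and the X-replacement property enters not to exchange bases at $v$ but to push $K(N_X(v))$ into the closure of $X-v+e$.
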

\begin{proof}
We proceed by contradiction. Suppose that the theorem does not hold for some $X\subseteq E(K_n)$. We may suppose that $X$ has been chosen such that $r(X)$ is as small as possible and, subject to this condition, that $|X|$ is as large as possible. 

We first show that (b) holds for $X$ when $X$ is a cyclic flat. Choose a vertex $v\in V(X)$ and a base $B$   of $X$ such that $d_B(v)$ is as small as possible. Then $d_B(v)\in\{3,4\}$  by Lemma \ref{lem:cyclicdegree}.
Let $N_X(v)=\{v_1,\dots, v_k\}$. Note that $k\geq 4$ since $X$ is cyclic and all circuits of ${\cal C}_{2,n}^1$ have minimum degree at least four.

Suppose that $d_B(v)= 3$.
If $K(N_X(v))\not\subseteq X$, then relabelling if necessary, we have $e=v_1v_2\in K(N_X(v))\setminus X$, and  $B-v+e$ is independent since $X$ is a flat.  Then
$B'=(B-v)\cup \{vv_1,vv_2,vv_3,vv_4\}$ is also independent
 since it can be obtained from $B-v+e$ by a 1-extension.
This contradicts the choice of $B$, since $|B'|>|B|$ and $B'\subseteq X$. Hence  $K(N_X(v))\subset X$ and (b) holds for $X$.

It remains to consider the case when $d_B(v)=4$. Then $v$ has degree 
at least four in all bases of $X$. We will refer to any base $B$ with the property that $d_B(v)=4$  as a {\em $v$-admissible base} of $X$. We will show that this case cannot occur by first showing that $v$ is a simplicial vertex in $X$ and then deducing that $v$ has degree three in some base of $X$.

\begin{claim}\label{claim:cliquerankX-v} $r(X-v)=r(X)-4$.
\end{claim}
\begin{proof}
The fact that $X$ has a $v$-admissible base implies that $r(X-v)\geq r(X)-4$. Let $B'$ be a base of $X-v$. If $|B'|>r(X)-4$, then $B'$ would extend to a base of $X$ in which $v$ has degree three and would contradict our assumption that $v$ has degree at least four in all bases of $X$. Hence $|B'|= r(X-v)=r(X)-4$.
\end{proof}

As $X$ is a cyclic flat, Corollary~\ref{cor:maximal} tells us that
$X$ is the union of copies of $K_5$.  Hence we may choose $S_1\subseteq X$ such that $S_1$ induces a copy of $K_5$ which contains $v$.
We may assume that
$V(S_1)=\{v, v_1, v_2, v_3, v_4\}$.

\begin{claim}\label{claim:clique_S1} There exists $Y\subseteq X$ such that $d_Y(v)=5$, $S_1\subseteq Y$ and $Y-vv_i$ is a $v$-admissible base of $X$ for all $1\leq i\leq 4$.
\end{claim}
\begin{proof}
Choose a $v$-admissible base $B$ of $X$. We will use the term {\em base exchange on $B$} to mean the operation which constructs a new base $B+e-f$ by first adding an edge $e\in X\sm B$ to $B$ and then deleting an edge $f$ from the fundamental circuit of $B+e$.
If $vv_i\in B$ for all $1\leq i\leq 4$ then, since $d_B(v)=4$, we could perform a sequence of base exchanges using the edges of $S_1$ to construct a base of $X$ in which $v$ has degree less than four.
 Hence we may assume that $vv_4\not\in B$.
Since $d_B(v)=4$ and every circuit of ${\cal C}_{2,n}^1$ has minimum degree at least four, we can also use a sequence of base exchanges to ensure that $vv_i\in B$ for all $1\leq i\leq 3$. We can now perform a further sequence of base exchanges using the edges of $S_1$ to ensure that $S_1-vv_4\subseteq B$. Let $Y=B+vv_4$. Then $S_1\subseteq Y$ and $Y-v_1v_4=B$ is a $v$-admissible base of $X$.
The fact that $S_1$ is the fundamental circuit of $B+vv_4$ now implies that $Y-vv_i$ is a $v$-admissible base of $X$ for all $1\leq i\leq 4$.
\end{proof}

%

Since $d_Y(v)=5$, $Y$ has an edge $vv_5$ for some $v_5\notin V(S_1)$.
In addition,   $Y-v+v_iv_5$ is independent in ${\cal C}_{2,n}^1$ for some $1\leq i\leq 4$,  since otherwise $Y-vv_1$ would be dependent
(as the closure of $X-v$ would contain  $K(v_1,v_2,\dots, v_5)$ and $v$ is joined to $\{v_1,v_2,\dots, v_5\}$ by four edges in $Y-vv_1$).
Hence, we may  assume without loss of generality that $Y-v+v_4v_5$ is independent.
Since $Y-v$ is a base of $X-v$ by Claims~\ref{claim:cliquerankX-v} and \ref{claim:clique_S1}, this gives
\begin{equation}\label{eq:cliquerankX-v+v4v5}
r(X-v+v_4v_5)=r(X-v)+1=r(X)-3.
\end{equation}

\begin{claim}\label{claim:cliqueNXv}
$K(N_X(v))\subseteq {\rm cl}(X-v+v_4v_5)$.
\end{claim}
\begin{proof}
We first show  that, for all $u\in N_X(v)-v_5$,
\begin{equation}
\label{eq:cliqueclaim3}
\text{$K(v_1,v_2,v_3,u)\subseteq {\rm cl}(X-v+v_4v_5)$.}
\end{equation}
Suppose to the contrary that $uv_i\not\in {\rm cl}(X-v+v_4v_5)$ for some $u\in N_X(v)-v_5$ and some $1\leq i\leq 3$.
Then $Y-v+v_4v_5+uv_i$ is independent in ${\cal C}_{2,n}^1$. 
Since $K(v_1,v_2,v_3,v_4)\subseteq S_1\subseteq X$, $u\neq v_4$.
Hence $uv_i$ and $v_4v_5$ are disjoint, and we
can perform an $X$-replacement to deduce that $B'=(Y-v)\cup \{vu,vv_i,vv_4,vv_5,vv_j\}$ is independent for any $1\leq j\leq 3$ with $j\neq i$. 
Since $|B'|=|Y|$ and $B'\subseteq X$, this would contradict the fact that $Y-vv_1$ is a base of $X$.

We can now use (\ref{eq:cliqueclaim3}) and axiom (A2) for abstract 3-rigidity from Section \ref{sec:rigidity} to deduce that
${\rm cl}(X-v+v_4v_5)$ contains $K(N_X(v)-v_5)$.  The same axiom implies that we can complete the proof of the claim by showing that $X$ contains at least three edges between $v_5$ and $N_X(v)\setminus \{v_5\}$.
To see this, recall that $X$ is the union of copies of $K_5$.
In particular we have $vv_5\in S_2$ for some copy $S_2$   of $K_5$ in $X$. Since every vertex of $V(S_2)\setminus \{v\}$ is in $N_X(v)$, we have $|(V(S_2)\setminus \{v_5,v\})\cap N_X(v)|=3$,
and $X$ contains the three edges from $v_5$ to the vertices of $V(S_2)\setminus\{v_5,v\}$.
\end{proof}



Since $X$ is a counterexample with minimum rank and $r(X-v+v_4v_5)<r(X)$ by (\ref{eq:cliquerankX-v+v4v5}), we can apply  part (a) of the theorem  
to ${\rm cl}(X-v+v_4v_5)$
to obtain a proper $K_5$-sequence ${\cal C}=(C_1,\dots, C_t)$ with 
\begin{equation}
\label{eq:clique4}
r(X-v+v_4v_5)={\rm val}(\cl(X-v+v_4v_5), {\cal C}).
\end{equation}
Since $K(N_X(v))\subseteq \cl(X-v+v_4v_5)$ by Claim~\ref{claim:cliqueNXv}, no $e\in K(N_X(v))$ can be a coloop in the matroid induced by  $\cl(X-v+v_4v_5)$ and hence 
%
$K(N_X(v))\subseteq  C_{\leq t}$ by Lemma~\ref{lem:rank1}.

Recall that $N_X(v)=\{v_1,\dots, v_k\}$.  Let $C_{t+i}$ be a copy  of $K_5$ on $\{v, v_i, v_{i+1}, v_{i+2}, v_{i+3}\}$ for $i=1, \dots, k-3$, and let ${\cal C}''=(C_1,C_2,\ldots,C_t,C_{t+1},\ldots, C_{t+k-3})$ be the  $K_5$-sequence obtained by
appending $(C_{t+1},\ldots,C_{t+k-3})$ to $\cal C$.
Since $K(N_X(v))\subseteq  C_{\leq t}$, we have $${\rm val}(X,{\cal C}'')={\rm val}(X-v,{\cal C})+k-(k-3)={\rm val}(X-v,{\cal C})+3={\rm val}(X-v+v_4v_5,{\cal C})+3.$$ This gives
\begin{alignat*}{2}
{\rm val}(X,{\cal C}'')&={\rm val}(X-v+v_4v_5,{\cal C})+3  \\
&\leq {\rm val}(\cl(X-v+v_4v_5),{\cal C})+3  \\
&=r(X-v+v_4v_5)+3 & \quad & (\text{by (\ref{eq:clique4})}) \\ 
&=r(X) & & (\text{by (\ref{eq:cliquerankX-v+v4v5})}).
\end{alignat*}
We may now use Lemma \ref{lem:rank1} to deduce that equality must hold throughout and that $C_{\leq t+k-3}\subseteq {\rm cl}(X)$. Since $X$ is a flat and $K(N_X(v))\subseteq  C_{\leq t}$, we have $K(N_X(v))\subseteq 
X$ and hence $v$ is a simplicial vertex of $K_n[X]$.

We complete the proof that (b) holds for $X$ by showing that $v$ has degree three in some base of $X$. 
Choose a base $B'$ of $X-v$ such that $B'$ contains a base of $K(N_X(v))$. Then $B'$ can be extended to a base $B$ of $X$. The facts that $B'$ contains a base of $K(N_X(v))$ and $|N_X(v)|\geq 4$ (since $X$ is cyclic), imply that $B$ will contain exactly three edges incident to $v$. Hence (b) holds for $X$.

By Lemma~\ref{lem:rank1},
$r(X)\leq |X\cup C_{\leq t}|-t$ holds
for any proper $K_{5}$-sequence  $(C_1,\dots, C_t)$.
Since $X$ is a counterexample to the theorem, (a) does not hold for $X$ and hence 
\begin{equation}\label{eq:tight}
\mbox{$r(X)<{\rm val}(X,{\cal C})$ for all proper $K_{5}$-sequences  ${\cal  C}$ of $K_n$.}
\end{equation}
%
Since $r(X)={\rm val}(X,\emptyset)$ when $|X|\leq 1$, (\ref{eq:tight}) implies that $|X|\geq 2$.

\begin{claim}\label{claim:connected}
$X$ is a cyclic flat in ${\cal C}_{2,n}^1$ and $d_X(v)\geq 4$ for all $v\in V(X)$.
\end{claim}
\begin{proof}
Suppose $X$ is not a flat. Then $r(X+e)=r(X)$ for some $e\in E(K_n)\setminus X$. This implies that  $V(X+e)=V(X)$, since the 0-extension property holds for ${\cal C}^1_{2,n}$,  and the maximality of $|X|$ now gives $r(X)=r(X+e)={\rm val}(X+e,{\cal C})\geq {\rm val}(X,{\cal C})$ for some proper $K_{5}$-sequence  $\cal C$ of $K_n$. This contradicts (\ref{eq:tight}).  Hence $X$ is a flat.

Suppose $X$ is not cyclic. Then $r(X-e)=r(X)-1$ for some $e\in X$. By  the minimality of $r(X)$, there is a proper $K_5$-sequence ${\cal C}=(C_{1},\dots, C_{t})$ such that
$r(X-e)={\rm val}(X-e, {\cal C})$. Since $e\not\in \cl(X-e)$, $e\not\in C_{\leq t}$ by 
Lemma~\ref{lem:rank1}, and hence $r(X)=r(X-e)+1={\rm val}(X-e, {\cal C})+1={\rm val}(X, {\cal C})$. This again contradicts  (\ref{eq:tight}).  Hence $X$ is cyclic.
%
%
%
%

The assertion that $d_X(v)\geq 4$ for all $v\in V(X)$ now follows since all circuits in ${\cal C}_{2,n}^1$ have minimum degree at least four.
 \end{proof}

Since $X$ is a cyclic flat by Claim~\ref{claim:connected}, (b) holds for $X$ and hence there exists a vertex $v\in V(X)$ such that $K(N_X(v))\subseteq X$, and $v$ has degree three in some base of $X$. Then $r(X-v)=r(X)-3$ and we may apply (a) to $X-v$ to obtain a proper $K_5$-sequence ${\cal C}=(C_1,\dots, C_t)$ with $r(X-v)={\rm val}(X-v, {\cal C})$.

Let $N_X(v)=\{v_1,\dots, v_k\}$ and let $C_{t+i}$ be a copy  of $K_5$ on $\{v, v_i, v_{i+1}, v_{i+2}, v_{i+3}\}$ for $i=1, \dots, k-3$.
Let
${\cal C}''=(C_1,C_2,\ldots,C_t,C_{t+1},\ldots, C_{t+k-3})$ be the  $K_5$-sequence obtained by
appending $C_{t+i}$ to $\cal C$ for $i=1, \dots, k-3$.
Since $K(N_X(v))\subseteq  
X-v$, we have 
$${\rm val}(X,{\cal C}'')={\rm val}(X-v,{\cal C})+k-(k-3)={\rm val}(X-v,{\cal C})+3
=r(X-v)+3=r(X).$$
This contradicts (\ref{eq:tight}) and completes the proof of the theorem.
\end{proof}

\begin{corollary}\label{cor:good_char}
The problem of deciding whether a given set $X\subseteq E(K_n)$  is independent  in  
${\cal C}_{2,n}^1$ is in ${\sf NP}\cap {\sf coNP}$.
\end{corollary}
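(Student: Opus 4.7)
The plan is to verify the two containments separately, using the Schwartz-Zippel lemma for membership in ${\sf NP}$ and Theorem~\ref{thm:rank}(a) for membership in ${\sf coNP}$.

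For the ${\sf NP}$ part, I would use the fact that a set $X$ is independent in ${\cal C}_{2,n}^1$ if and only if the rows of the cofactor matrix $C_2^1(K_n,p)$ indexed by $X$ are linearly independent for \emph{some} point $p:V(K_n)\to \mathbb{R}^2$ (equivalently, for a generic $p$). Since the minors of $C_2^1(K_n,p)$ of order $|X|$ are polynomials of total degree $O(|X|)$ in the $2n$ coordinates of $p$, the Schwartz-Zippel lemma guarantees that whenever $X$ is independent there exist integer coordinates for $p$ of bit length polynomial in $n$ such that the corresponding restricted matrix has full row rank. The certificate is this choice of rational $p$; verification amounts to checking that the relevant submatrix of $C_2^1(K_n,p)$ has rank $|X|$, which is polynomial-time linear algebra over $\mathbb{Q}$.

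For the ${\sf coNP}$ part, I would invoke Theorem~\ref{thm:rank}(a). If $X$ is dependent in ${\cal C}_{2,n}^1$, then there is a proper $K_5$-sequence $(C_1,\dots,C_t)$ in $K_n$ achieving $r(X) = |X\cup C_{\leq t}| - t < |X|$, and conversely any proper $K_5$-sequence with $|X\cup C_{\leq t}| - t < |X|$ witnesses $r(X)<|X|$ via Lemma~\ref{lem:rank1}. The certificate is the sequence itself. Its length is polynomially bounded, since properness forces $|C_i \setminus C_{\leq i-1}|\geq 1$ for all $i\geq 2$, giving $t \leq |C_{\leq t}| \leq \binom{n}{2}$; each $C_i$ is described by the $5$ vertices of its underlying $K_5$. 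Verification requires (i) checking that each $C_i$ is the edge set of a $K_5$, (ii) checking properness by confirming inductively that $C_i \not\subseteq C_{\leq i-1}$, and (iii) computing $|X\cup C_{\leq t}| - t$ and comparing with $|X|$, all clearly in polynomial time.

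The substantive content sits entirely in Theorem~\ref{thm:rank}(a), which supplies the min-max rank formula underpinning the ${\sf coNP}$ certificate; the two routine observations are that proper $K_5$-sequences are short and that Schwartz-Zippel applied to cofactor minors produces a polynomial-size witness of independence. Combining these yields the corollary.
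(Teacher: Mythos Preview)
Your proposal is correct and follows essentially the same approach as the paper: Theorem~\ref{thm:rank}(a) supplies the ${\sf coNP}$ certificate via a proper $K_5$-sequence, and the Schwartz--Zippel lemma applied to the $C_2^1$-cofactor matrix supplies the ${\sf NP}$ certificate. The paper additionally mentions the coordinate-fixing argument of Fekete and Jord\'an as an alternative for the ${\sf NP}$ direction, but your Schwartz--Zippel route is one of the two options they cite, and your explicit bound $t\le |C_{\le t}|\le \binom{n}{2}$ on the certificate length is a detail the paper leaves implicit.
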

\begin{proof} Theorem \ref{thm:rank} immediately implies that the problem belongs to  ${\sf coNP}$. The fact that it also belongs to ${\sf NP}$ follows by applying either the Schwartz-Zippel Lemma \cite{S80, Z79}, or the more elementary coordinate fixing argument of Fekete and Jord\'an \cite{FJ} to the $C_2^1$-cofactor matrix.
\end{proof}

\section{Shellable covers}\label{sec:shell}

We use Theorem \ref{thm:rank} to obtain an alternative formula for the rank function of 
${\cal C}_{2,n}^1$ which is closely related to existing conjectures on the rank functions of the generic 3-dimensional rigidity matroid  \cite{DDH83,JJ06,JDIMACS} and the generic 2-dimensional matrix completion matroid \cite{JJT14}.

Let
$\MX$ be a family of subsets of $V(K_n)$.
%
A {\em hinge} of $\MX$ is
 a pair of vertices $\{x,y\}$
with  $X_i\cap X_j=\{x,y\}$ for two distinct  $X_i,X_j\in \MX$. Let $H(\MX)$ be the set of all hinges of $\MX$. 
The {\em degree} $\deg_\MX(h)$ of a hinge $h$ of $\MX$
 is the number of sets in $\MX$ which contain $h$. 
 The family $\MX$ is said to be {\em $t$-thin} if $|X_i\cap X_j|\leq t$ for all distinct $X_i,X_j\in \MX$.

For $F\subseteq E(K_n)$, we say that $\MX$ is a {\em cover} of $F$
if each set in $\MX$ has cardinality at least five and each edge of $F$ is induced by at least one set in $\MX$. 
Dress et al~\cite{DDH83} conjectured that 2-thin covers could be used to characterize the rank function of  
${\cal R}_{3,n}$. More specifically they defined
the {\em value} of a family $\MX$ as
$$\valD(\MX)= \sum _{X\in \MX}(3|X|-6)\,-\sum_{h\in H(\MX)}(\deg_\MX(h)-1)$$
and conjectured that the rank of any $F\subseteq E(K_n)$ in ${\cal R}_{3,n}$ is given by $\min\{|F_0|+{\valD}(\MX)\}$ where the minimum is taken over all $F_0\subseteq F$ and all 2-thin covers $\MX$ of $F\setminus F_0$ with sets of size at least three. This conjecture was shown to be false in \cite{JJ05} by giving an example of a 2-thin family $\MX$ with a negative value. Modified conjectures were given in \cite{JJ06, JDIMACS} which placed further restrictions on the type of cover used to obtain the minimum.
We will show that these restricted 2-thin covers can be used to characterize the rank function of the maximal abstract 3-rigidity matroid ${\cal C}^1_{2,n}$. 

A family $\MX$ of subsets of $V(K_n)$ 
is said to be {\em $k$-shellable} if its elements  can be ordered as a sequence $(X_1,X_2,\ldots,X_m)$ so that, for all $2\leq i\leq m$, 
$|X_i\cap \bigcup_{j=1}^{i-1}X_j|\leq k$.
Similarly,  $\MX$  is said to be {\em $k$-degenerate} if its elements  can be ordered as a sequence $(X_1,X_2,\ldots,X_m)$ so that, for all $2\leq i\leq m$, the set of hinges of the subfamily $\{X_1,X_2,\ldots,X_{i}\}$ which are contained in $X_{i}$ has size at most $k$. The two concepts are 
related since 
every $k$-shellable family is ${{k}\choose{2}}$-degenerate. 

\begin{theorem}\label{thm:rankcover}
Suppose $r$ is the rank function of ${\cal C}^1_{2,n}$ and $F\subseteq E(K_n)$.
Then
\begin{equation}\label{eq:rankcover}
r(F)=\min\left\{|F_0|+\valD(\MX) \right\},
\end{equation}
where the minimum is taken over all $F_0\subseteq F$ and all $4$-shellable,
$2$-thin
covers $\MX$ of $F\setminus F_0$  with sets of size at least five.
\end{theorem}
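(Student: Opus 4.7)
The plan is to establish two separate inequalities: the upper bound $r(F)\le |F_0|+\valD(\MX)$ for every admissible cover, and the lower bound realized by exhibiting a specific cover attached to an optimal proper $K_5$-sequence from Theorem~\ref{thm:rank}.

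For the upper bound I would induct along a 4-shellable ordering $X_1,X_2,\ldots,X_m$ of $\MX$. The base step $r(K(X_1))\le 3|X_1|-6$ follows from Theorem~\ref{thm:hang}, since ${\cal C}^1_{2,n}$ is an abstract 3-rigidity matroid and $|X_1|\ge 5$. For the inductive step, submodularity yields
\[
r\!\bigl(\textstyle\bigcup_{j\le i}K(X_j)\bigr)\le r\!\bigl(\bigcup_{j<i}K(X_j)\bigr)+r(K(X_i))-r\!\bigl(K(X_i)\cap\bigcup_{j<i}K(X_j)\bigr).
\]
The 2-thin condition forces every edge of $K(X_i)\cap\bigcup_{j<i}K(X_j)$ to be a hinge $X_i\cap X_j=\{x,y\}$, while 4-shellability confines these edges to a vertex set of size at most $4$. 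Since the smallest circuit of ${\cal C}^1_{2,n}$ is $K_5$, any set of edges on at most four vertices is independent, so the intersection's rank equals its cardinality, namely the number $|H_i|$ of hinges $h\subseteq X_i$ also lying in some earlier $X_j$. Telescoping and observing $\sum_{i\ge 2}|H_i|=\sum_{h\in H(\MX)}(\deg_\MX(h)-1)$ gives $r(\bigcup K(X_i))\le \valD(\MX)$, and hence $r(F)\le |F_0|+\valD(\MX)$ by monotonicity, since $\MX$ covers $F\setminus F_0$.

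For the lower bound I would invoke Theorem~\ref{thm:rank}(a) to pick a proper $K_5$-sequence $(C_1,\ldots,C_t)$ in $K_n$ satisfying $r(F)=|F\cup C_{\le t}|-t$. Setting $F_0:=F\setminus C_{\le t}$ rewrites this as $r(F)=|F_0|+(|C_{\le t}|-t)$, reducing the task to constructing a 4-shellable, 2-thin cover $\MX$ of $F\cap C_{\le t}$ with $\valD(\MX)\le |C_{\le t}|-t$. My construction starts from $\MX_0=\{V(C_1),\ldots,V(C_t)\}$ and iteratively replaces any two sets $Y,Y'$ with $|Y\cap Y'|\ge 3$ by their union $Y\cup Y'$ until no such pair remains. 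The resulting $\MX$ is 2-thin by construction, has sets of size at least $5$, and still covers $\bigcup K(V(C_i))\supseteq F\cap C_{\le t}$. A 4-shellable ordering is obtained by processing the final sets in the order in which they were sealed off during the merging, using 2-thinness to bound the contact between a new set and the previous union.

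The main obstacle is proving that after all the cascading merges one indeed has $\valD(\MX)\le |C_{\le t}|-t$. A direct bookkeeping shows that merging two sets with overlap of size $k$ perturbs the naive $\sum(3|Y|-6)-\sum(\mathrm{mult}-1)$ expression by $\tfrac{(k-3)(k-4)}{2}$, which vanishes when $k\in\{3,4\}$ (the only overlaps possible among the initial 5-element sets) but becomes positive for $k\ge 5$, which can arise between already-merged sets. To control this one exploits Corollary~\ref{cor:maximal} and axiom (A2): whenever two merged blobs share at least five vertices, their union is already forced to be a clique in ${\cal C}^1_{2,n}$, so one can argue inductively (using Theorem~\ref{thm:rank}(b) to peel off a simplicial vertex and invoking the inductive hypothesis on the smaller graph) that the final cover's value stays bounded by $|C_{\le t}|-t$. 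Verifying 4-shellability after cascading is a secondary but essentially combinatorial task, completed by ordering merged sets along the simplicial-peeling tree.
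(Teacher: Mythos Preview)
Your upper bound is correct and coincides with the paper's Lemma~\ref{lem:valupper}.

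The lower bound via merging the $V(C_i)$ has two genuine gaps. First, the bookkeeping is wrong once other sets are present. Writing $\mathrm{val}'(\MX)=\sum_X(3|X|-6)-\sum_e(\mathrm{mult}_\MX(e)-1)$ one has the identity
\[
\mathrm{val}'(\MX)=\Bigl|\bigcup_{X\in\MX}K(X)\Bigr|-\sum_{X\in\MX}\tfrac{(|X|-3)(|X|-4)}{2},
\]
so indeed $\mathrm{val}'(\MX_0)=|C_{\le t}|-t$. But merging $Y$ and $Y'$ changes \emph{both} terms: the penalty moves as you say, while the first term jumps by the number of new edges in $K(Y\cup Y')\setminus(K(Y)\cup K(Y'))$, and that jump interacts with the remaining sets. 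Your ``$(k-3)(k-4)/2$'' change is valid only for two sets in isolation; in general $k=3,4$ merges can \emph{decrease} $\mathrm{val}'$ and $k\ge5$ merges can increase it, and you have no invariant that guarantees $\mathrm{val}'(\MX_{\rm final})\le |C_{\le t}|-t$. Second, $2$-thinness does not yield $4$-shellability: four $6$-sets whose $\binom{4}{2}$ pairwise intersections are six disjoint pairs form a $2$-thin family in which every set meets the union of the other three in six vertices. Each such $6$-set is readily built from two $K_5$'s sharing four vertices without creating any cross-overlap of size~$\ge 3$, so your merging process can output exactly this non-$4$-shellable configuration. ``Order by when sets were sealed off'' does not rescue this.

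The idea you gesture at in the last lines---peel a simplicial vertex via Theorem~\ref{thm:rank}(b) and induct---is the correct route, but it should \emph{replace} the merging argument rather than patch it. The paper does precisely this (Theorem~\ref{thm:dress}): pass to the flat $\cl(F)$, let $\MX^*$ be the family of maximal cliques of size at least five in $K_n[\cl(F)]$ and $F_0$ the uncovered edges. Axiom~(A2) makes $\MX^*$ automatically $2$-thin. Theorem~\ref{thm:rank}(b) produces a vertex $v$ contained in a unique $X_v\in\MX^*$ with $r(\cl(F)-v)=r(\cl(F))-3$; by minimal-counterexample induction the cover of the flat $\cl(F)-v$ is $4$-shellable with the correct value, and appending $X_v$ (or replacing $X_v$ by $X_v-v$ when $|X_v|\ge 6$) at the end of that order yields both $4$-shellability and $r(\cl(F))=|F_0|+\valD(\MX^*)$.
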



It was conjectured in \cite{JDIMACS} that the 
expression on the right hand side of
(\ref{eq:rankcover}) determines the rank function of 
${\cal R}_{3,n}$ when the minimum is taken over all $9$-degenerate 2-thin covers of $F\setminus F_0$  with sets of size at least three.
Theorem~\ref{thm:rankcover} solves the $C_2^1$-rigidity counterpart of this conjecture (since 4-shellable covers are 6-degenerate). 


The proof of Theorem \ref{thm:rankcover} will take up the remainder of this section.
Our first result (Lemma~\ref{lem:valupper}) implies that both 4-shellable covers and 9-degenerate covers give an upper bound on the rank function of any 
$K_5$-matroid. 

Given a
matroid $M$ on $E(K_n)$ we say that a family $\MX$ of subsets of $V(K_n)$ 
is {\em $M$-degenerate} if  its elements  can be ordered as a sequence $(X_1,X_2,\ldots,X_m)$ so that, for all $2\leq i\leq m$, the set of hinges of the subfamily $\{X_1,X_2,\ldots,X_{i}\}$ which are contained in $X_{i}$ is independent in $M$ (when viewed as a set of edges of $K_n$).

\begin{lemma}\label{lem:valupper}
Let $M$ be a $K_5$-matroid
defined on $E(K_n)$ and $r$ be its rank function.
Suppose $F\subseteq E(K_n)$ and $\MX$ is an  $M$-degenerate
cover of $F$  with sets of size at least five. Then $r(F)\leq \valD(\MX)$.
\end{lemma}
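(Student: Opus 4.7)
The plan is to prove the lemma by induction on $m := |\mathcal{X}|$, combining submodularity of the rank function with the $M$-degeneracy hypothesis. For the base case $m = 1$, we have $H(\mathcal{X}) = \emptyset$ and $\valD(\mathcal{X}) = 3|X_1| - 6$; since $F \subseteq K(X_1)$, monotonicity of $r$ together with Lemma~\ref{lem:upper_bound} applied to $M|_{K(X_1)}$ give $r(F) \leq r(K(X_1)) \leq 3|X_1| - 6 = \valD(\mathcal{X})$.

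For the inductive step, fix an $M$-degenerate ordering $(X_1, \ldots, X_m)$ of $\mathcal{X}$ and set $\mathcal{X}' = (X_1, \ldots, X_{m-1})$. The restricted ordering witnesses that $\mathcal{X}'$ is itself $M$-degenerate, and $\mathcal{X}'$ covers $F' := F \cap \bigcup_{j<m} K(X_j)$, so the inductive hypothesis yields $r(F') \leq \valD(\mathcal{X}')$. Since $F \subseteq F' \cup K(X_m)$, submodularity of the rank function and the bound $r(K(X_m)) \leq 3|X_m| - 6$ from Lemma~\ref{lem:upper_bound} together give
\[
r(F) \leq r(F') + r(K(X_m)) - r(F' \cap K(X_m)) \leq \valD(\mathcal{X}') + (3|X_m| - 6) - r(F' \cap K(X_m)).
\]
Writing $\Delta := \valD(\mathcal{X}') + (3|X_m| - 6) - \valD(\mathcal{X})$ for the change in the hinge-correction term $\sum_h(\deg_\mathcal{X}(h) - 1)$ when $X_m$ is appended, the lemma reduces to showing $r(F' \cap K(X_m)) \geq \Delta$.

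For this last inequality the $M$-degeneracy hypothesis supplies the crucial ingredient. The set $H'_m$ of hinges of $\mathcal{X}$ contained in $X_m$ is independent in $M$, and each $h \in H'_m$ arises as $X_p \cap X_q = h$ for some $p < m$, so $h \in K(X_p) \cap K(X_m) \subseteq F' \cap K(X_m)$. This yields the baseline estimate $r(F' \cap K(X_m)) \geq |H'_m|$, and a short calculation shows that $\Delta = |H'_m|$ whenever $\mathcal{X}$ is $2$-thin (every pairwise intersection has size at most $2$), completing the proof in this important case.

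The hard part will be the general case, where the defect $\Delta - |H'_m|$ is strictly positive precisely when some hinge $h \in H(\mathcal{X}) \setminus H(\mathcal{X}')$ lies in two sets $X_p, X_q \in \mathcal{X}'$ whose intersection strictly contains $h$, forcing $|X_p \cap X_q| \geq 3$. I expect the argument to close this gap by exploiting the additional independent edges contributed to $F' \cap K(X_m)$ by the cliques $K(X_p \cap X_q \cap X_m)$ whenever this triple intersection has at least $3$ vertices, using the fact that $K_t$ for $t \leq 4$ is independent in any $K_5$-matroid (so $r(K_t) = \binom{t}{2}$). The main obstacle is coordinating these contributions across distinct hinges without double counting, and handling the degenerate subcase $X_p \cap X_q \cap X_m = h$, which requires a more delicate argument than direct submodularity, possibly combining the estimate with an auxiliary bound derived from Lemma~\ref{lem:upper_bound} applied to the vertex set $V(F)$.
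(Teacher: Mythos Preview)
Your inductive framework mirrors the paper's. The paper also peels off $X_m$, applies induction to $\MX'=\MX\setminus\{X_m\}$, and combines the bound $r(K(X_m))\leq 3|X_m|-6$ with the independence of the set $H_m$ of hinges of $\MX$ lying in $X_m$. It packages this slightly differently---instead of submodularity, it fixes a base $B'$ of $F'$ containing $H_m$, extends to a base $B$ of $F$, and bounds $|B\setminus B'|\leq 3|X_m|-6-|H_m|$---but the resulting inequality $r(F)\leq \valD(\MX')+3|X_m|-6-|H_m|$ is exactly what you obtain once you lower-bound $r(F'\cap K(X_m))$ by $|H_m|$. One detail you need to add: your inclusion ``$K(X_p)\cap K(X_m)\subseteq F'\cap K(X_m)$'' is false as written, since $F'\subseteq F$ need not contain the hinge edges. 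The paper handles this at the outset by reducing to the case $H(\MX)\subseteq F$, noting that adding hinge edges to $F$ leaves $\valD(\MX)$ and $M$-degeneracy unchanged and can only increase $r(F)$.

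Your diagnosis of the non-$2$-thin case is accurate, and in fact it applies equally to the paper's argument. The equality $\valD(\MX')+3|X_m|-6-|H_m|=\valD(\MX)$ that the paper asserts at the end of its proof holds only when $\MX$ is $2$-thin: for instance, if $X_1\cap X_2=\{a,b,c\}$ and $X_1\cap X_3=X_2\cap X_3=\{a,b\}$, then with $m=3$ one has $|H_m|=1$ while the change $\Delta$ in the hinge sum is $\deg_\MX(\{a,b\})-1=2$. So neither your argument nor the paper's literally closes the general case as stated. This does not affect the paper's results, since Lemma~\ref{lem:valupper} is only ever invoked for $2$-thin covers (in Theorem~\ref{thm:rankcover} and the remarks following Theorem~\ref{thm:dress}), and for those both proofs are complete; your plan for the general case is therefore aiming at a genuinely harder statement than what the paper actually establishes or uses.
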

\begin{proof} We use induction on $|\MX|$.
If $\MX=\{X\}$ then $H(\MX)=\emptyset$ and Lemma \ref{lem:upper_bound} gives
$$r(F)\leq \max\{3|V(F)|-6,1\}\leq 3|X|-6=\valD(\MX)$$
since $V(F)\subseteq X$ and $|X|\geq 5$.
Hence suppose that $|\MX|\geq 2$.

Let  $H$ be the set of all edges of $K_n$ which are induced by a hinge of $\cal X$.
We may assume that $H\subseteq  F$  since adding edges of $H$ to $F$ will not change $\valD(\MX)$ or the fact that $\MX$ is an $M$-degenerate cover of $F$, and can only increase $r(F)$.
Since  $\MX$ is $M$-degenerate, we can choose an ordering
$(X_1,X_2,\ldots,X_m)$ of $\MX$ so that, for all $2\leq i\leq m$, the set of hinges of the subfamily $\{X_1,X_2,\ldots,X_{i}\}$ which are contained in $X_{i}$ are independent in $M$ (when viewed as a set of edges of $K_n$).
Let $F_m$ be the set of edges in $F$ covered by $X_m$, $H_m=F_m\cap H$ and
$F'=F\setminus (F_m\sm H_m)$. Since $H_m$ is independent in $M$, we may choose a maximum independent subset  $B'$ of $F'$  with $H_m\subseteq B'$.
Since $\MX'=\MX\setminus\{X_m\}$ is an $M$-degenerate
cover of $F'$, we may use induction to deduce that $|B'|\leq \valD(\MX')$.  Let $B$ be a maximum independent subset of $F$ which contains $B'$. Then $|B\cap F_m|\leq 3|X_m|-6$ and hence $|B\setminus B'|\leq 3|X_m|-6-|H_m|$.  This gives
$$r(F)=|B|= |B'|+|B\setminus B'|\leq \valD(\MX')+ 3|X_m|-6-|H_m|=\valD(\MX).$$
\end{proof}

Lemma \ref{lem:valupper} and
 the facts that 4-shellable covers are $M$-degenerate in any 
 $K_5$-matroid on $E(K_n)$ and  $r(F)\leq |F_0|+
r(F\setminus F_0)$  for all $F_0\subseteq F\subseteq E(K_n)$, imply that the right hand side of (\ref{eq:rankcover}) gives an upper bound on $r(F)$.  It follows that we may complete the proof of Theorem \ref{thm:rankcover} by exhibiting a set $F_0\subseteq F$ and a 4-shellable, 2-thin cover $\MX$ of $F\setminus F_0$ with sets of size at least five, such that $r(F)=|F_0|+\valD(\MX)$. To do this, 
we will choose sets $F_0$ and $\MX$ suggested by
a second conjecture  of 
Dress, 
see \cite{CDT,GSS93,T99}.

Given a graph $G=(V,E)$, a {\em maximal clique} of $G$ is a maximal subset $X\subseteq V$ such that $G[X]$ is a complete graph.
Suppose that $F\subseteq E(K_n)$ 
is a flat in ${\cal R}_{3,n}$.
Let $\MX^*$ be the set of all  maximal cliques in $K_n[F]$ of size at least three, and $F_0$ be the set of edges in $F$ which are not covered by $\MX^*$.
Dress's second conjecture is that the rank of $F$ in ${\cal R}_{3,n}$ is equal to $|F_0|+\valD(\MX^*)$. We will show that a slightly modified form of this conjecture holds for 
${\cal C}_{2,n}^1$.

\begin{thm}\label{thm:dress}
Suppose that $F\subseteq E(K_n)$ is a flat in $\MC_{2,n}^1$. Let $\MX^*$ be the set of all maximal cliques in $K_n[F]$ of size at least five, and $F_0$ be the set of edges in $F$ which are not covered by $\MX^*$.
Then $\MX^*$ is a $4$-shellable, $2$-thin cover of $F\setminus F_0$ and
\begin{equation}\label{eq:dressrank}
r(F)=|F_0|+\valD(\MX^*).
\end{equation}
\end{thm}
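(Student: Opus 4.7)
I will establish three ingredients: (i) $\MX^*$ is $2$-thin; (ii) $F_0$ coincides with the set $T$ of coloops of $M|_F$, so that $F_c:=F\setminus F_0$ is a cyclic flat and the maximal cliques of $F$ of size at least $5$ are exactly those of $F_c$; and (iii) both the $4$-shellability of $\MX^*$ and the rank identity \eqref{eq:dressrank}, which I prove simultaneously by induction on $|V(F)|$. Ingredient (i) is forced by axiom (A2): if two members $X,X'\in\MX^*$ satisfied $|X\cap X'|\geq 3$, then $\cl(K(X)\cup K(X'))=K(X\cup X')$, and the flatness of $F$ would give $K(X\cup X')\subseteq F$, contradicting the maximality of $X$. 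For (ii), Corollary~\ref{cor:maximal} implies that the cyclic flat $F_c$ is a union of $K_5$'s; each such $K_5$ extends to a maximal clique of $F$ of size $\geq 5$, so every edge of $F_c$ is covered by $\MX^*$, while no coloop lies inside the circuit $K_5$.

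For the inductive step of (iii) (the base case $|V(F)|\leq 4$ being trivial), apply Theorem~\ref{thm:rank}(b) to $F_c$ to find a vertex $v$ whose $F_c$-neighbors form a clique and which has degree $3$ in some base of $F_c$; the unique member of $\MX^*$ containing $v$ is then $X^*:=\{v\}\cup N_{F_c}(v)$. Define $F-v$ to be $F$ minus all edges of $F$ incident to $v$. Using the $0$-extension property of $\MC_{2,n}^1$ together with the flatness of $F$, one checks that $F-v$ is itself a flat of $\MC_{2,n}^1$. Decomposing $F=F_c\sqcup F_0$ and removing first the coloops $C_v\subseteq F_0$ at $v$ (losing rank $|C_v|$) and then the $F_c$-edges at $v$ (losing rank $3$ by the degree-$3$ property of $v$) gives
\[
r(F)=r(F-v)+3+|C_v|.
\]

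The inductive hypothesis applied to $F-v$ (which has smaller vertex set) yields $r(F-v)=|F_0(F-v)|+\valD(\MX^*(F-v))$, and I compute both terms explicitly in two cases. If $|X^*|\geq 6$, then $X^*-v$ (still of size $\geq 5$) remains a maximal clique of $F-v$ --- axiom (A2) together with $2$-thinness rules out any new vertex extending $X^*-v$ --- so $\MX^*(F-v)=(\MX^*\setminus\{X^*\})\cup\{X^*-v\}$; $F_0(F-v)=F_0\setminus C_v$ (no new coloops arise, since every edge of $K(X^*-v)$ lies in a $K_5\subseteq K(X^*-v)$); and $\valD$ drops by exactly $3$ because $v$ lies in no hinge. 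If $|X^*|=5$, then $X^*-v$ has size $4$ and falls out of the cover; of the six edges of $K(X^*-v)$, exactly $h^*$ (the number of hinges of $X^*$ with other cliques) remain covered by other members of $\MX^*$, while the remaining $6-h^*$ become new coloops of $M|_{F-v}$, so $|F_0(F-v)|=|F_0|-|C_v|+6-h^*$ and $\valD$ drops by $9-h^*$. In either case the arithmetic collapses to $r(F)=|F_0|+\valD(\MX^*)$. A $4$-shellable ordering of $\MX^*$ is then obtained by lifting one for $\MX^*(F-v)$: replace $X^*-v$ by $X^*$ in its slot (intersections are preserved since $v$ lies in no other clique) when $|X^*|\geq 6$, or append $X^*$ at the end when $|X^*|=5$, noting $|X^*\cap\bigcup_{X\in \MX^*\setminus\{X^*\}}X|\leq|X^*-v|=4$.

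The main obstacle will be the sub-case $|X^*|=5$, where $X^*-v$ falls below the size-$5$ threshold and the decomposition of $F-v$ into covered edges and coloops is genuinely different from that of $F$; the combinatorial identity $(6-h^*)+h^*=6$, balanced against the shift $-9+h^*$ in $\valD$ and the $3$-unit rank drop at $v$, is what makes the induction close. The other delicate point --- the verification that $X^*-v$ really is a maximal clique of $F-v$ when $|X^*|\geq 6$ --- depends essentially on axiom (A2).
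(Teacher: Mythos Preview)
Your proof is correct and follows essentially the same approach as the paper: use Theorem~\ref{thm:rank}(b) to locate a simplicial vertex $v$ of degree three in a base, delete $v$, and split into the two cases $|X^*|\geq 6$ and $|X^*|=5$. The only difference is that the paper first reduces to the case where $F$ is cyclic (hence $F_0=\emptyset$) by peeling off a coloop in a minimal counterexample, whereas you carry the coloop set $F_0$ through the induction via the extra term $|C_v|$; the resulting bookkeeping is exactly what you describe.
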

\begin{proof} We proceed by contradiction. Suppose that the theorem does not hold for some $F\subseteq E(K_n)$ and that $F$ has been chosen to be as small as possible. The hypothesis that $F$ is a flat and the axiom (A2) of abstract 3-rigidity imply that $\MX^*$ is 2-thin. We will obtain a contradiction by showing  that  $\MX^*$ is  $4$-shellable and
$r(F)=|F_0|+\valD(\MX^*)$.

We first show that $F$ is a cyclic set in ${\cal C}_{2,n}^1$.  Suppose to the contrary that some $e\in F$ is a coloop in  ${\cal C}_{2,n}^1|_F$. Then $e\in F_0$. Since $F$ is a flat, $F-e$ is a flat and $\MX^*$ is the set of all maximal cliques in $K_n[F-e]$ of size at least five. By 
the minimality of $F$, $\MX^*$ is 4-shellable and we have
$$r(F)=r(F-e)+1=|F_0-e|+\valD(\MX^*)+1=|F_0|+\valD(\MX^*).$$
This contradicts the choice of $F$. Hence $F$ is cyclic. Corollary \ref{cor:maximal} now implies that $F_0=\emptyset$.


By Theorem~\ref{thm:rank}(b),
there exists a base $B$ of $F$ and a vertex $v\in V(F)$ such that $d_B(v)=3$ and $K(N_F(v))\subseteq F$. Then $r(F-v)=r(F)-3$ and $\cl(F-v)=\cl(F)-v=F-v$ so $F-v$ is a flat in $\MC_{2,n}^1$. Since  $K(N_F(v))\subseteq F$ and $d_F(v)\geq 4$, $v$ is contained in a unique maximal clique $X_v\in \MX^*$.

Suppose that $d_F(v)\geq 5$ and let $X_v'=X_v-v$. Then $|X_v|\geq 6$, so $|X'_v|\geq 5$, $\MX'=\MX^*-X_v+X_v'$ is the set of maximal cliques of $K_n[F-v]$, and $H(\MX')=H(\MX^*)$. We can now use the minimality of $F$ to deduce that $\MX'$ and $\MX^*$ are 4-shellable and
$$r(F)=r(F-v)+3=\valD(\MX')+3=\valD(\MX^*).$$
This contradicts the choice of $F$.

It remains to consider the case when $d_F(v)= 4$. Then  $|X_v|=5$ and
$\MX'=\MX^*-X_v$ is the set of maximal cliques of $K_n[F-v]$ of size at least five. Furthermore, the set $F_0'$ of edges of $F-v$ which do not belong to a clique in $\MX'$ is given by $K(N_X(v))\setminus H(\MX^*)$.  We can now use induction to deduce that $\MX'$ is 4-shellable and $r(F-v)=|F_0'|+\valD(\MX')$. Let $(X_1,X_2,\ldots,X_t)$ be a 4-shellable ordering of the cliques in $\MX'$. Since $|X_v\cap V(F-v)|=4$, $(X_1,X_2,\ldots,X_t,X_v)$ will be a 4-shellable ordering of the cliques in $\MX^*$ and hence $\MX^*$ is 4-shellable. In addition
$$r(F)=r(F-v)+3=|F_0'|+\valD(\MX')+3=\valD(\MX^*).$$
This contradicts the choice of $F$ and completes the proof of the theorem.
\end{proof}

As noted above, Theorem \ref{thm:rankcover} follows from Lemma \ref{lem:valupper} and Theorem \ref{thm:dress}: Lemma \ref{lem:valupper} implies that the minimum on the right hand side of (\ref{eq:rankcover}) is an upper bound on $r(F)$, and we may apply 
Theorem \ref{thm:dress} to $\cl(F)$ to deduce that equality is attained.

\medskip
Jackson and Jord\'an introduced $M$-degenerate 2-thin covers as `{iterated $2$-thin covers}' in \cite{JJ06} and conjectured   that they determine the rank function of 
 ${\cal R}_{3,n}$. Theorem \ref{thm:rankcover}, Lemma \ref{lem:valupper} and the fact that $4$-shellable covers are $M$-degenerate  for any 
 $K_5$-matroid imply that the conjectured rank formulae  in \cite[Conjectures 3.2, 3.3]{JJ06} hold for the $C_2^1$-cofactor matroid. 
 
Cheng and Sitharam introduced  $k$-degenerate 2-thin covers in \cite{CS14} as `generalized partial $k$-trees'. They used 3-degenerate 2-thin covers to show that the number of edges in any maximal `(3,6)-sparse subgraph' of a graph gives an upper bound on its rank in 
${\cal R}_{3,n}$.


\section{\boldmath Sufficient connectivity conditions for $C_2^1$-rigidity of graphs}\label{sec:con}
We say that a graph $G=(V,E)$ with $n$ vertices is {\em $C_2^1$-rigid} if its edge set spans ${\cal C}_{2,n}^1$. It is {\em $k$-connected} if $n\geq k+1$ and $G-U$ is connected for all $U\subset V$ with $|U|\leq k-1$. 
We use the results of the preceding section to obtain sufficient connectivity conditions for a graph to be $C_2^1$-rigid.

Jackson and Jord\'an 
conjectured in \cite[Example 2]{JJ06} that, if $G=(V,F)$ is 5-connected and $F$ is a circuit in ${\cal R}_{3,n}$, then $G$ is rigid in $\R^3$. Our first result implies that the $C_2^1$-rigidity counterpart of this conjecture is true.  
%

\begin{theorem}
\label{thm:5conn_conn}
Suppose $G=(V,F)$ is a 5-connected graph on $n$ vertices and 
$F$ is a cyclic set in ${\cal C}_{2,n}^1$.   Then $G$ is $C_{2}^1$-rigid. 
\end{theorem}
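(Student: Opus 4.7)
The plan is to argue by contradiction using the cover rank formula from Theorem~\ref{thm:dress}. Suppose $F$ does not span $\mathcal{C}_{2,n}^1$, and set $X:=\cl_{\mathcal{C}_{2,n}^1}(F)$. The first preparatory step is to verify that $X$ is a cyclic flat with $X\subsetneq E(K_n)$: the closure of a cyclic set is cyclic (for $e\in \cl(F)\setminus F$ the fundamental circuit in $F+e$ lies in $\cl(F)$), and $X=E(K_n)$ would force $F$ to span. Moreover, 5-connectivity forces $\delta(G)\geq 5$, so every vertex of $V$ is incident to an edge of $F\subseteq X$, giving $V(X)=V$.

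Next I would apply Theorem~\ref{thm:dress} to $X$. Since $X$ is a cyclic flat, Corollary~\ref{cor:maximal} and the reasoning in the proof of Theorem~\ref{thm:dress} show that the exceptional set $F_0$ is empty; hence the family $\MX^*$ of all maximal cliques in $K_n[X]$ of size at least five is a $4$-shellable, $2$-thin cover of the whole of $X$. Combining $V(X)=V$ with $X\subsetneq E(K_n)$ rules out $|\MX^*|=1$ (otherwise the unique member would have to equal $V$ and then $X=E(K_n)$), so $|\MX^*|\geq 2$.

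The crucial step is to extract a small vertex cut of $G$ from the shelling. Fix a $4$-shellable ordering $Y_1,\dots,Y_m$ of $\MX^*$ and put $W:=Y_m\cap\bigcup_{j<m}Y_j$, so $|W|\leq 4$. Note $Y_m\setminus W\neq\emptyset$ because $|Y_m|\geq 5>|W|$, and $V\setminus Y_m\neq\emptyset$ because $Y_1$ is a maximal clique distinct from $Y_m$, hence not contained in $Y_m$. The key observation is that no edge $uw\in F$ can have $u\in Y_m\setminus W$ and $w\in V\setminus Y_m$: such an edge lies in $X$, so is covered by some $Y_j\in\MX^*$; if $j=m$ then $w\in Y_m$, while if $j<m$ then $u\in Y_m\cap Y_j\subseteq W$, and each alternative contradicts the choice of $u$ and $w$. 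Therefore $W$ separates $Y_m\setminus W$ from $V\setminus Y_m$ in $G$ with $|W|\leq 4$, contradicting $5$-connectivity.

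I do not anticipate significant obstacles beyond the bookkeeping above; the heart of the argument is simply that any $4$-shelling of a $2$-thin cover automatically supplies a vertex cut of size at most four, and Theorem~\ref{thm:dress} is what guarantees that such a cover exists for every cyclic flat of $\mathcal{C}_{2,n}^1$.
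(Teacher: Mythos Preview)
Your proof is correct and follows essentially the same approach as the paper: pass to the closure of $F$, invoke Theorem~\ref{thm:dress} (together with Corollary~\ref{cor:maximal} to see that $F_0=\emptyset$) to obtain a $4$-shellable $2$-thin cover by maximal cliques, and then use $5$-connectivity to force the cover to consist of a single clique. The paper states the last step in one line (``The hypothesis that $G$ is $5$-connected now implies that $|\mathcal{X}|=1$''), whereas you spell out the separation argument that the last set $Y_m$ in a shelling order yields a vertex cut $W=Y_m\cap\bigcup_{j<m}Y_j$ of size at most four; this is exactly the content the paper is suppressing.
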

\begin{proof} We may assume without loss of generality that $F$ is a flat since replacing $F$ by its closure will not affect the hypotheses or conclusion of the theorem. Corollary \ref{cor:maximal} now implies that every edge of $G$ belongs to a copy of $K_5$.
Let ${\cal X}$ be the set of all maximal cliques of size at least five in $H$. By Theorem~\ref{thm:dress},  ${\cal X}$ is a 4-shellable  cover of $F$. The hypothesis that $G$ is $5$-connected now implies that
$|{\cal X}|=1$. Hence $G$ is a clique with at least six vertices and the theorem holds for $G$. 
\end{proof} 

Tay \cite{T93} gave examples of 4-connected graphs $G=(V,F)$ on $n$ vertices which are not rigid in $\R^3$ and are such that $F$ is a circuit 
in ${\cal R}_{3,n}$. The same  graphs also show that the connectivity condition of Theorem \ref{thm:5conn_conn} is best possible since they are not ${\cal C}_{2,n}^1$-rigid and $F$ is a circuit  in ${\cal C}_{2,n}^1$.

%
%
%
%

Lov{\'a}sz and Yemini~\cite{LY82} proved that every 6-connected graph is rigid in $\R^2$ and conjectured that every 12-connected graph is rigid in $\R^3$.
Our second result solves the $C_2^1$-rigidity counterpart of their conjecture.

\begin{theorem}\label{thm:12conn}
Let $G=(V,F)$ be a $12$-connected graph on $n$ vertices and $S\subset E$ with $|S|\leq 6$.
Then $G-S$ is $C_{2}^1$-rigid.
\end{theorem}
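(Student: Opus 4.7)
The plan is to argue by contradiction. Suppose $H := G - S$ is not $\MC_2^1$-rigid, so $F^* := \cl_{\MC_{2,n}^1}(E(H))$ is a proper flat of $\MC_{2,n}^1$ with $r(F^*) \leq 3n - 7$. Applying Theorem~\ref{thm:dress} to $F^*$ yields a $4$-shellable, $2$-thin family $\MX^* = \{X_1, \dots, X_m\}$ consisting of the maximal cliques of $K_n[F^*]$ of size at least five, together with a leftover edge set $F_0\subseteq F^*$, satisfying
\[
r(F^*) = |F_0| + \sum_{i=1}^m (3|X_i| - 6) - \sum_{h \in H(\MX^*)}(\deg_{\MX^*}(h) - 1) \leq 3n - 7. \qquad (1)
\]

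Since $G$ is $12$-connected, $\delta(G) \geq 12$, so $|E(H)| \geq |E(G)| - |S| \geq 6n - 6$, whence $|F^*| \geq 6n-6$. Combining this with the cardinality identity $|F^*| = |F_0| + \sum_i \binom{|X_i|}{2} - \sum_h (\deg_{\MX^*}(h)-1)$ and subtracting~(1) gives the deficit bound
\[
\sum_{i=1}^m \binom{|X_i| - 3}{2} \;=\; |F^*| - r(F^*) \;\geq\; 3n + 1. \qquad (2)
\]
In particular, $\MX^* \neq \emptyset$ and the cliques are forced to be rather large.

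Next I would exploit the $12$-edge-connectivity of $G$ (which follows from $12$-vertex-connectivity): for every $X_i\subsetneq V$, $|E_H(X_i, V \setminus X_i)| \geq 12 - |S| \geq 6$. Each such crossing edge $uv$ (with $u \in X_i$, $v \notin X_i$) lies in $F^*$ and is therefore either in $F_0$, or contained in some other clique $X_j$ (with $u \in X_i \cap X_j$ and $|X_i \cap X_j| \leq 2$ by $2$-thinness). The key counting step is to sum the $\geq 6$ crossing-edge counts over all $m$ cliques, distribute each crossing edge between $F_0$ and the absorbing cliques while avoiding double-counting, and combine this with the observation that every vertex of $V \setminus \bigcup_i X_i$ has all of its ($\geq 6$) incident $H$-edges in $F_0$. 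This should yield a lower bound on $|F_0|$ of the shape $|F_0| \geq \alpha m + \beta \sum_i(|X_i| - c) - \gamma \sum_h(\deg_{\MX^*}(h) - 1)$ for suitable non-negative constants. Plugging this bound into~(1) and comparing with~(2) then produces the desired contradiction.

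I expect the main obstacle to be the cross-edge bookkeeping: a single crossing edge for $X_i$ can be absorbed into $X_j$ through a hinge $X_i \cap X_j$, so a naive summation over $i$ would overcount the edges forced into $F_0$. The plan is to charge each such absorption against the corresponding hinge and use the $2$-thinness (each pair of cliques shares at most two vertices) together with the $4$-shellable ordering (at each new $X_i$ at most four vertices are shared with the previous cliques) to bound the total absorption in terms of $\sum_h(\deg_{\MX^*}(h)-1)$, thereby ensuring that enough crossings remain in $F_0$ to dominate the rank formula as needed.
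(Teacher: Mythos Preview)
Your setup through~(2) is correct: the cardinality identity for a $2$-thin cover is valid, and subtracting the Dress formula does give $\sum_i\binom{|X_i|-3}{2}=|F^*|-r(F^*)\geq 3n+1$. But from there the argument does not close, and the gap is real rather than just bookkeeping.

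The paper's proof is structurally different. It chooses a \emph{minimal} counterexample (so that $F\setminus S$ is itself a flat) and then runs a per-vertex fractional charging argument in the style of Lov\'asz--Yemini: for each $v$ one shows
\[
\tfrac{|F_0^v|}{2}+\sum_{X\ni v}\Bigl(3-\tfrac{6}{|X|}\Bigr)-\sum_{h\ni v}\tfrac{\deg_{\MX}(h)-1}{2}\;\geq\;3,
\]
and summing over $v$ reproduces $|F_0|+\valD(\MX)$ on the left and $3|V|$ on the right, contradicting the rank bound. The delicate case is $k_v=1$ with $|X_1^v|$ large: then the clique term alone contributes at most $3-\tfrac{6}{|X_1^v|}<3$, and one needs $|F_0^v|\geq 1$. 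The paper obtains this from the minimality of the counterexample (their Claim~\ref{claim:connectivity1}): if $v$ lies in a single clique and has no $F_0$-edges then $v$ is simplicial, $G-v$ is still $12$-connected, and induction applies. Your approach takes the closure rather than a minimal counterexample, so you have no analogue of this reduction, and your global crossing-edge scheme has no visible mechanism to compensate for such vertices.

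Concretely, your plan charges each of the $\geq 6$ crossing $H$-edges of $X_i$ either to $F_0$ or to a hinge with an absorbing clique $X_j$. But an absorbed crossing edge $uv$ with $u\in X_i\cap X_j$ and $v\in X_j\setminus X_i$ need not correspond to any hinge of $\MX^*$ at all: $|X_i\cap X_j|$ may equal~$1$. So the bound you hope for, $|F_0|\geq \alpha m+\beta\sum_i(|X_i|-c)-\gamma\sum_h(\deg(h)-1)$, cannot be obtained from the hinge count alone, and the $4$-shellability gives control only over the number of \emph{vertices} shared with earlier cliques, not over the number of crossing edges they can absorb. Finally, your deficit bound~(2) is a \emph{lower} bound on $\sum_i\binom{|X_i|-3}{2}$; to contradict~(1) via the identity $r(F^*)=|F^*|-\sum_i\binom{|X_i|-3}{2}$ you would need an \emph{upper} bound, so~(2) is pointing the wrong way and it is not clear what role it plays in the intended contradiction.
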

\begin{proof}
Suppose, for a contradiction, that $G-S$ is not $C_{2}^1$-rigid. We may assume that $G$ and $S$ have been chosen to be a counterexample such that $|V|$ is as small as possible and, subject to this condition, $|F\setminus S|$ is as large as possible.
Then $F\setminus S$ is a flat in ${\cal C}_{2,n}^1$ since if $e\in \cl(F\setminus S)\setminus (F\setminus S)$, then we may apply induction to $(F+e,S-e)$ to deduce that $G-S+e$ is $C_{2}^1$-rigid and hence, since $e\in \cl(F\setminus S)$, $G-S$ is  $C_{2}^1$-rigid.

Let ${\cal X}$ be the set of all maximal cliques of size at least five in $G-S$, $\hat F_0$ be the set of edges in $G-S$ not covered by any member of ${\cal X}$ and put $F_0=\hat F_0\cup S$.
By Theorem~\ref{thm:dress}, ${\cal X}$ is a 4-shellable 2-thin cover of $F\setminus  F_0$ and 
$|\hat F_0|+\valD({\cal X})=r(F\setminus S)<3|V(G)|-6$. Since $|S|\leq 6$ this gives
\begin{equation}
\label{eq:connectivity_rank}
|F_0|+\valD({\cal X})<3|V(G)|.
\end{equation}

For each vertex $v\in V(F)$, let $F_0^v$ be the set of edges in $F_0$ incident to $v$, $k_v$ be the number and ${\cal X}^v=\{X_1^v, \dots, X_{k_v}^v\}$ be the set of cliques in $\cal X$ which contain $v$, 
and 
$H_v$ be the set of hinges of ${\cal X}^v$ that contain $v$.  
Note that $H_v\subseteq H({\cal X})$ and ${\rm deg}_{{\cal X}^v}(h)={\rm deg}_{\cal X}(h)$ for each $h\in H_v$.

\begin{claim}
\label{claim:connectivity1}
For each $v\in V(F)$, either $F_0^v\neq \emptyset$ or $k_v\geq 2$.
\end{claim}
\begin{proof}
Suppose that $F_0^v=\emptyset$ and $k_v=1$, i.e., $v$ is not incident to an edge of $F_0$ and is contained in exactly one maximal clique  $X_1^v\in {\cal X}$. 
Since $\cal X$ covers $F\sm F_0$, we have $K(N_G(v))\subset G$ and  either $G-v$ is  12-connected or $G=K_{13}$.
Since $K_{13}$ is $C_2^1$-rigid, the first alternative must occur, and we may apply induction to deduce that $G-v-S$ is $C_2^1$-rigid.
This and $d_G(v)\geq 12$ in turn imply that $G$ is $C_2^1$-rigid, which is a contradiction.
\end{proof}

The $C_2^1$-rigidity of $G-S$ will  follow easily from our next claim.

\begin{claim}
\label{claim:connectivity2}
For all $v\in V(G)$,
\begin{equation}
\label{eq:connectivity_key}
\frac{|F_0^v|}{2}+\sum_{i=1}^{k_v}\left(3-\frac{6}{|X_i^v|}\right)-\sum_{h\in H_v}\left(\frac{{\rm deg}_{\cal X}(h)-1}{2}\right)\geq 3.
\end{equation}
\end{claim}
\begin{proof}
If $k_v=0$, then, by 12-connectivity, $|F_0^v|\geq 12$ holds, which implies (\ref{eq:connectivity_key}).
Hence we may suppose that $k_v\geq 1$.
%
%
%


Suppose that $k_v=1$, i.e., ${\cal X}^v=\{X_1^v\}$. Then $H_v=\emptyset$.
By Claim~\ref{claim:connectivity1}, $|F_0^v|\geq 1$.
Hence, if $|X_1^v|\geq 12$, then (\ref{eq:connectivity_key}) holds.
It remains to  consider the subcase when $5\leq |X_1^v|\leq 11$.
As $v$ has degree at least 12 in $G$, we have 
$(|X_1^v|-1)+|F_0^v|\geq 12$.
This implies that
\[
\frac{|F_0^v|}{2}+\left(3-\frac{6}{|X_1^v|}\right)
\geq 9.5-\frac{|X_1^v|}{2}-\frac{6}{|X_1^v|}.
\]
When $5\leq |X_1^v|\leq 11$, the right side of this inequality is at least $3$,
and we get (\ref{eq:connectivity_key}).


Hence we may assume that $k_v\geq 2$.
Put $c_1=0$ and let $c_i=\left|\left(\bigcup_{j=1}^{i-1}X_j^v\right)\cap X_i^v\right|-1$ for $2\leq i \leq k_v$. 
Then $c_i$ represents the contribution of $X_i^v$ to the `hinge count' at $v$ in
$\valD(X_1^v, \dots, X_{i}^v)$
and we have
\[
\sum_{h\in H_v}({\rm deg}_{{\cal X}^v}(h)-1)=\sum_{i=1}^{k_v}c_i.
\]
Since ${\cal X}$ is 4-shellable, ${\cal X}^v$ is 4-shellable.
Hence, by reordering the sequence $(X_1^v,\dots, X_{k_v}^v)$ if necessary, we can suppose that $c_i\leq 3$ for all $2\leq i\leq k_v$. 
Also, since ${\cal X}^v$ is 2-thin, 
$c_2\leq 1$.
Since $|X_i^v|\geq 5$ for all $1\leq i\leq k_v$, we may deduce that
\[
\sum_{i=1}^2 \left(3-\frac{6}{|X_i^v|}-\frac{c_i}{2}\right)\geq 
(3-\tfrac{6}{5})+(3-\tfrac{6}{5}-\tfrac{1}{2})>3,
\]
and, for all $3\leq i\leq k_v$,
\[
3-\frac{6}{|X_i^v|}-\frac{c_i}{2}\geq 3-\tfrac{6}{5}-\tfrac{3}{2}> 0.
\]
Combining these inequalities, we obtain
$$
\frac{|F_0^v|}{2}+\sum_{i=1}^{k_v}\left(3-\frac{6}{|X_i^v|}\right)-\sum_{h\in H_v}\left(\frac{{\rm deg}_{\cal X}(h)-1}{2}\right)
= \frac{|F_0^v|}{2}+\sum_{i=1}^{k_v}\left(3-\frac{6}{|X_i^v|}-\frac{c_i}{2}\right)>3. 
$$
This completes the proof of the claim.
\end{proof}

Taking the sum of (\ref{eq:connectivity_key}) over all vertices of $G$, we obtain
\[
|F_0|+\valD({\cal X})=|F_0|+\sum_{X\in {\cal X}}(3|X|-6)-\sum_{h\in H({\cal X})} ({\rm deg}_{\cal X}(h)-1)\geq 3|V(G)|.
\]
This contradicts (\ref{eq:connectivity_rank}) and completes the proof of the theorem. 
%
\end{proof}

Lov\'asz and Yemini  \cite{LY82} gave examples of 11-connected graphs $G=(V,F)$  which are not rigid in $\R^3$.  These graphs also show that the connectivity condition of Theorem \ref{thm:12conn} is best possible since they are not ${\cal C}_{2,n}^1$-rigid. The 12-connected graph $G$ consisting of two large complete graphs joined by a set $T$ of 12 disjoint edges fails to be  ${\cal C}_{2,n}^1$-rigid if we delete 7 edges from $T$. This shows that the bound on $|S|$ in Theorem \ref{thm:12conn} is best possible.

\section{Problems and Remarks}\label{sec:close}

Many interesting problems remain.
\begin{enumerate}
\item Developing a deterministic polynomial time algorithm to solve the combinatorial optimization problem of evaluating  the rank formulae in Theorems~\ref{thm:rank} and \ref{thm:rankcover} is our most obvious
open problem.
These formulae imply that the decision problem of determining whether the rank takes any given value is in NP$\cap$co-NP, so it is likely that 
such an algorithm exists.
But it is not clear
how to use the existing theory of submodular functions to design such an algorithm since our rank formulae 
are significantly different to other known matroid rank formulae.


\item  Let $M_0$ be the rank ${{d+2}\choose{2}}$ truncation of ${\cal R}_{d,n}$ (or equivalently, of any other $K_{d+2}$-matroid on $E(K_n))$ for $n\geq d+2$. 

\begin{conjecture}\label{con:K_d+2}
The rank function of the free elevation of $M_0$ is given by
\[
r(F)=\min\{|F\cup C_{\leq t}|-t: (C_1,\dots, C_t) \text{ is a proper $K_{d+2}$-sequence in $K_n$} \}
\]
for all $F\subseteq E(K_n)$.
\end{conjecture}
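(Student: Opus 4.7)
The plan is to imitate the proof of Theorem~\ref{thm:rank}, working with the free elevation $N$ of $M_0$. Since $N$ is a $K_{d+2}$-matroid, Lemma~\ref{lem:rank1} gives the upper bound $r_N(X)\leq |X\cup C_{\leq t}|-t$ for every proper $K_{d+2}$-sequence, so only the reverse inequality needs proof. I would prove this together with a companion statement (b): every cyclic flat $X$ of $N$ contains a vertex $v\in V(X)$ such that $K(N_X(v))\subseteq X$ and $v$ has degree $d$ in some base of $X$. Proceeding by contradiction, pick a counterexample $X$ minimizing $r_N(X)$, then maximizing $|X|$. Exactly as in Claim~\ref{claim:connected}, such an $X$ must be a cyclic flat of $N$. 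Since the non-spanning cyclic flats of $M_0$ are precisely the copies of $K_{d+2}$, and the spanning cyclic flat $E(K_n)$ is itself a union of copies of $K_{d+2}$ when $n\geq d+2$, iterating Lemma~\ref{lem:covering} through the sequence of free erections defining $N$ gives that every cyclic flat of $N$---in particular $X$---is a union of copies of $K_{d+2}$.

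Once statement (b) is available for $X$, the extraction of a proper $K_{d+2}$-sequence attaining $r_N(X)$ is routine and mirrors the final paragraph of the proof of Theorem~\ref{thm:rank}: apply the inductive hypothesis to $X-v$ to obtain a proper $K_{d+2}$-sequence ${\cal C}$ with ${\rm val}(X-v,{\cal C})=r_N(X-v)=r_N(X)-d$, and then append $|N_X(v)|-(d-1)$ copies of $K_{d+2}$ through $v$ covering $K(N_X(v))$. So the crux lies entirely in proving (b).

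The proof of (b) in Theorem~\ref{thm:rank} proceeds in two steps. First, Lemma~\ref{lem:cyclicdegree} supplies a base $B$ of $X$ and a vertex $v\in V(X)$ with $d_B(v)\in\{d,\dots,2d-2\}$. Second, a case analysis converts such $v$ into a simplicial vertex of degree exactly $d$ in some base, and here the $d$-dimensional X-replacement property of the ambient matroid is the essential tool: when $d_B(v)=d$, a direct exchange argument shows $v$ is simplicial, and when $d_B(v)=d+1$, X-replacement is invoked (see Claim~\ref{claim:cliqueNXv}). To extend Lemma~\ref{lem:cyclicdegree} to $N$ one needs $N$ to satisfy the $0$-extension property, which is equivalent to $N$ being an abstract $d$-rigidity matroid, i.e.\ having rank $dn-{d+1\choose 2}$; under this hypothesis the proof of Lemma~\ref{lem:cyclicdegree} transfers verbatim.

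The main obstacle is the second step. For $d\geq 4$ the range $\{d,d+1,\dots,2d-2\}$ of possible minimum degrees in a base is no longer a single step above $d$, so a cascade of reductions is needed, requiring $k$-extension-like properties of $N$ for $k=2,\dots,d-1$. Worse still, even ordinary X-replacement is not known for the abstractly defined free elevation $N$: Whiteley's Theorem~\ref{thm:construction_cofactor} establishes X-replacement only for the concrete $C_{d-1}^{d-2}$-cofactor matroid, and transferring the property to $N$ appears entangled with Whiteley's maximality conjecture (Conjecture~\ref{conj:max_cofactor}). A proof of Conjecture~\ref{con:K_d+2} is therefore likely to go hand in hand with establishing suitable inductive extension properties for the free elevation; a more ambitious route, bypassing the explicit simplicial-vertex machinery, would be to attack the broader Conjecture~\ref{conj:rank} directly.
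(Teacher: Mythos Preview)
The statement you are trying to prove is labeled \textbf{Conjecture}~\ref{con:K_d+2}, and the paper does not prove it. It appears in the closing section as an open problem; the paper only remarks that the case $d=3$ follows from Theorem~\ref{thm:rank} and that the cases $d=1,2$ are easy and treated elsewhere. So there is no ``paper's own proof'' to compare against.

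Your proposal is not a proof either, and to your credit you say so: you sketch how one would try to imitate the proof of Theorem~\ref{thm:rank} and then correctly isolate the two places where the argument breaks down for general $d$. First, Lemma~\ref{lem:cyclicdegree} requires the $0$-extension property, i.e.\ that the free elevation $N$ of $M_0$ is an abstract $d$-rigidity matroid; the paper explicitly notes (in the paragraph following Conjecture~\ref{con:K_d+2}) that this is not known. Second, even granting that, the reduction from a base-degree in $\{d,\ldots,2d-2\}$ down to $d$ relies on $1$-extension and X-replacement, which in the $d=3$ proof are supplied by Whiteley's Theorem~\ref{thm:construction_cofactor} for the concrete cofactor matroid ${\cal C}_{2,n}^1$; no analogue is available for the abstractly defined free elevation, and for $d\geq 4$ you would additionally need higher $k$-extension properties that fail even for the cofactor matroids.

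In short: your analysis of the obstacles is accurate and matches the paper's own assessment, but you should present it as a discussion of an open conjecture rather than as a proof attempt.
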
 
This would imply that the free elevation of $M_0$ is the unique maximal $K_{d+2}$-matroid on $K_n$, and hence verify
Graver's conjecture that there exists a unique maximal abstract $d$-rigidity matroid.   Conjecture  \ref{con:K_d+2} holds for $d=3$ by Theorem \ref{thm:rank}. It is not difficult to see that it also holds for for $d=1,2$, see for example  \cite{JT}.

Conjecture  \ref{con:K_d+2} would imply, in particular, that
the free elevation of $M_0$ is an abstract $d$-rigidity matroid. This would follow from Theorem \ref{thm:hang} if we could show that the free elevation of $M_0$ has the 0-extension property.
%
%
%

\item Testing generic 4-dimensional rigidity of graphs is recognized as being an even more difficult problem than generic 3-dimensional rigidity. One reason for this is Whiteley's result that ${\cal R}_{4,n}$ is not the unique maximal abstract 4-rigidity matroid since
$K_{6,6}$
is independent in ${\cal C}_{3,n}^2$ but not in ${\cal R}_{4,n}$.
Even so, Conjecture~\ref{conj:rank} may still be robust enough to deal with such bad examples.

\begin{conjecture}\label{con:R4}
The rank function of ${\cal R}_{4,n}$ is given by
\[
r(F)=\min\{|F\cup C_{\leq t}|-t: (C_1,\dots, C_t) \text{ is a proper $\{K_6, K_{6,6}\}$-sequence in $K_n$} \}
\]
for all $F\subseteq E(K_n)$. 
\end{conjecture}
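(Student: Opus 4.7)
The upper bound $r(F) \leq \min\{|F \cup C_{\leq t}| - t\}$ is immediate from Lemma~\ref{lem:rank1}, since both $K_6$ and $K_{6,6}$ are non-spanning circuits of ${\cal R}_{4,n}$ for $n$ sufficiently large. The matching lower bound is the substantive content and requires adapting the structural arguments behind Theorem~\ref{thm:rank} to a setting with two distinct types of ``base'' circuits.

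My first move would be to attempt an analogue of Theorem~\ref{thm:K5free}: identifying ${\cal R}_{4,n}$ as the free elevation of a suitable matroid $M_0$ whose non-spanning circuits are exactly the $K_6$'s and $K_{6,6}$'s in $K_n$. By Lemma~\ref{lem:unique} this is equivalent to showing ${\cal R}_{4,n}$ is the unique maximal $\{K_6,K_{6,6}\}$-matroid, a 4-dimensional strengthening of Theorem~\ref{thm:cofactor} that is not currently available. Assuming this step, Lemma~\ref{lem:covering} would immediately give the crucial covering property: every cyclic flat of ${\cal R}_{4,n}$ is a union of $K_6$'s and $K_{6,6}$'s. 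I would then follow the counterexample-minimising scheme of Theorem~\ref{thm:rank}, choosing $F$ with $r(F)$ minimal and $|F|$ maximal, showing via the analogues of Claim~\ref{claim:connected} that $F$ must be a cyclic flat, and invoking the covering lemma to describe its structure. The induction would proceed by locating a vertex $v$ with a simplicial-type neighbourhood supporting a $K_6$ (the degree analysis of Lemma~\ref{lem:cyclicdegree} is the natural starting point), splicing a proper $\{K_6,K_{6,6}\}$-sequence for $F-v$ with a short sequence of $K_6$'s covering the removed edges, and thereby contradicting the assumption that $F$ violates the formula.

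The main obstacle is twofold. First, the coexistence of $K_6$ and $K_{6,6}$ as minimal circuits forces a bifurcated structural analysis of cyclic flats: the simplicial-vertex picture from Theorem~\ref{thm:rank}(b) must be supplemented by an analogous reduction for bipartite $K_{6,6}$-blocks, which can overlap each other and intersect $K_6$-blocks in ways that are not clique-like and cannot be resolved by removing a single vertex. One already sees this difficulty when trying to extend Lemma~\ref{lem:cyclicdegree}: the minimum-degree bound that worked so cleanly for a cyclic flat spanned by cliques becomes much weaker once $K_{6,6}$ appears, since $K_{6,6}$ has many more edges than a clique on the same vertex set.

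Second, and more seriously, the $d=3$ proof relied critically on the X-replacement property for ${\cal C}^1_{2,n}$ (Theorem~\ref{thm:construction_cofactor}) and on ${\cal C}^1_{2,n}$ being the unique maximal matroid respecting a single circuit type. Neither tool is available for ${\cal R}_{4,n}$: the X-replacement conjecture remains open, and Whiteley's counterexample $K_{d+2,d+2}$ shows that ${\cal R}_{4,n}$ cannot be accessed via the cofactor matroid ${\cal C}^2_{3,n}$, ruling out a transfer of the techniques of the paper through an intermediate $C^2_3$-cofactor analysis. A successful proof will therefore need a genuinely new degree/replacement argument for 4-dimensional rigidity that simultaneously tracks $K_6$- and $K_{6,6}$-circuits, rather than a direct transcription of the $C_2^1$ techniques developed here.
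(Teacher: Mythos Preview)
The statement you are addressing is a \emph{conjecture} in the paper (Conjecture~\ref{con:R4}), presented in the closing ``Problems and Remarks'' section precisely because it is open; the paper offers no proof, so there is nothing to compare your attempt against.

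Your proposal is, appropriately, not a proof but an outline of a strategy together with a candid assessment of where it breaks down, and that assessment is essentially accurate. The paper's own remark immediately following the conjecture confirms your first step: the formula would force ${\cal R}_{4,n}$ to be the free elevation of its rank-$36$ truncation and the unique maximal $\{K_6,K_{6,6}\}$-matroid on $E(K_n)$, i.e.\ the missing analogue of Theorems~\ref{thm:cofactor} and~\ref{thm:K5free}. Your second obstacle, the absence of an X-replacement property for ${\cal R}_{4,n}$, is exactly the barrier the paper flags after Theorem~\ref{thm:construction_cofactor} as preventing a transfer of the $d=3$ argument. One small inaccuracy: you assert via Lemma~\ref{lem:unique} that ``${\cal R}_{4,n}$ is the free elevation of $M_0$'' is \emph{equivalent} to ``${\cal R}_{4,n}$ is the unique maximal $\{K_6,K_{6,6}\}$-matroid''. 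Lemma~\ref{lem:unique} only supplies one implication (unique maximal $\Rightarrow$ equals the free elevation), and even that step requires knowing that the non-spanning circuits of the relevant truncation of ${\cal R}_{4,n}$ are \emph{precisely} the copies of $K_6$ and $K_{6,6}$ --- itself a nontrivial statement about small circuits in ${\cal R}_{4,n}$ that the paper does not establish.
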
 
This would imply that ${\cal R}_{4,n}$ is the free elevation of its rank 36 truncation, and also that ${\cal R}_{4,n}$ is  the unique maximal $\{K_6,K_{6,6}\}$-matroid on $E(K_n)$ i.e.~the unique maximal matroid in the poset of all matroids on $E(K_n)$ in which every copy of $K_6$ and $K_{6,6}$ is a  circuit.
%

\item A simpler example of a matroid in which all small circuits are copies of complete or complete bipartite graphs arises in the context of the rank 2  completion of partially filled skew-symmetric matrices, see Bernstein~\cite{B17}.
All copies of $K_4$ and $K_{3,3}$  are circuits in this matroid and we 
conjecture that 
it is  the unique maximal $\{K_4,K_{3,3}\}$-matroid on $E(K_n)$. Combined with Conjecture~\ref{conj:rank}, this would imply that 
its  rank function
  is given by
\[
r(F)=\min\{|F\cup C_{\leq t}|-t: (C_1,\dots, C_t) \text{ is a proper $\{K_4, K_{3,3}\}$-sequence in $K_n$} \}
\]
for all $F\subseteq E(K_n)$.
\end{enumerate}

%
%
%

\section*{Acknowledgments} 
We would like to thank Steve Noble for bringing our attention to the theory of matroid erections and for helpful conversations on this topic. We also thank the referee whose careful reading and detailed suggestions significantly improved the presentation of this paper.

This work was supported by JST CREST Grant Number JPMJCR14D2, JSPS KAKENHI Grant Number 18K11155 and EPSRC overseas travel grant EP/T030461/1.
\bibliographystyle{amsplain}

\begin{thebibliography}{99}
%

\bibitem{AR78}
L.~Asimow and B.~Roth. 
\newblock The rigidity of graphs.
\newblock {\em Trans. Amer. Math. Soc.},
245:  279--289, 1978.

\bibitem{B17}
D.~I.~Bernstein. 
\newblock Completion of tree metrics and rank 2 matrices.
\newblock {\em Linear Algebra Appl.}, 533: 1--13, 2017.



\bibitem{B86}
T.~Brylawski. 
\newblock {Constructions}. In 
\newblock {\em Theory of Matroids}, N.L.~White ed., Cambridge Univ.~Press, Cambridge 1986: 127--223.

\bibitem{CS14}
J.~Cheng and M.~Sitharam. 
\newblock 
Maxwell-independence: a new rank estimate for the 3-dimensional generic rigidity matroid.
\newblock {\em J.~Comb.~Theory, Ser. B.},
105: 26--43, 2014.

\bibitem{CJT1}
K.~Clinch, B.~Jackson and S.~Tanigawa. 
\newblock 
Abstract 3-rigidity and bivariate $C_2^1$-splines I: Whiteley's
maximality conjecture.
\newblock {\em Discrete Analysis}, to appear.

\bibitem{CH}
C.~R.~Coullard and L.~Hellerstein. 
\newblock 
Independence and
port oracles for matroids, with an application to computational
learning theory.
\newblock {\em
Combinatorica}, 16: 
189--208, 1996.

\bibitem{C70}
H.~Crapo. 
\newblock  Erecting geometries.
\newblock {\em Proc.~2nd Chapel Hill Conf.~on Comb.~Math},  1970: 74--99.

\bibitem{CDT} H.~Crapo, A.~Dress and T.-S.~Tay. 
\newblock 
Problem 4.2. In
\newblock {\em Matroid Theory} (J.E. Bonin, J.G. Oxley and B. Servatius eds.,
Seattle, WA, 1995),
Contemp. Math., 197, Amer. Math. Soc., Providence, RI, 1996: 414.



\bibitem{C14}
J.~Cruickshank. 
\newblock  On spaces of infinitesimal motions and three dimensional Henneberg extensions.
\newblock {\em Discrete Comput. Geom.}, 51: 702--721, 2014.

\bibitem{CKT15}
J.~Cruickshank, D.~Kitson, and S.~Power. 
\newblock  The generic rigidity of triangulated spheres with blocks and holes.
\newblock {\em 
J.~Combin.~Theory Ser.~B}, 122: 550--577, 2017.

\bibitem{CKT18}
J.~Cruickshank, D.~Kitson, and S.~Power. 
\newblock  The generic minimal rigidity of a partially triangulated torus.
\newblock {\em
Proc.~London Math.~Soc.}, 118: 1277--1304, 2019. 


\bibitem{D87} R.~Duke. 
\newblock  Matroid erection and duality.
\newblock {\em European Journal of Combinatorics}, 8: 367--370, 1987.

\bibitem{DDH83} A.~Dress, A.~Drieding, and H.~Haegi. 
\newblock  Classification of mobile molecules by category theory. In {\em Symmetries and Properties of Non-Rigid Molecules: A Comprehensive Study}, J. Maruana,
J. Serre (Eds.), Elsevier,
Amsterdam, 1983:  39--58.

\bibitem{Ed} J.~Edmonds. 
\newblock  Systems of discrete representatives and linear algebra.
\newblock {\em J. Res. Nat. Bur. Standards}, 71B: 241--247, 1967.

\bibitem{FJ} Z.~Fekete and T.~Jord\'an. 
\newblock  Rigid
realizations of graphs on small grids.
\newblock {\em Computational Geometry}, 32: 216-222, 2005.


\bibitem{FW}
W.~Finbow-Singh and W.~Whiteley. 
\newblock 
Isostatic block and hole frameworks.
\newblock {\em
SIAM J.~Discrete Math.}, 27: 991-1020, 2013.

\bibitem{F} A.~L.~Fogelsanger. 
\newblock  The generic rigidity of minimal cycles. PhD Thesis, Cornell University, 1988.

\bibitem{Glu}
{ H.~Gluck}. 
\newblock  Almost all simply connected closed surfaces are rigid. In
{\em Geometric topology} (Proc. Conf., Park City, Utah, 1974),
Lecture Notes in Math., Vol. 438,
Springer, Berlin, 1975: 225--239.

\bibitem{G91} J.~E.~Graver. 
\newblock  Rigidity matroids.
\newblock {\em SIAM Journal on Discrete Mathematics}, 4  355--368, 1991.

\bibitem{GSS93} J.~E.~Graver, B. Servatius, and H. Servatius. 
\newblock  {\em Combinatorial rigidity},  Amer. Math. Soc., 1993.


\bibitem{JDIMACS} B.~Jackson. 
\newblock 
Rigidity of graphs and frameworks.  Presentation at the DIMACS Workshop: Distance geometry: Theory and Applications,  2016,  available at\\
\url{http://archive.dimacs.rutgers.edu/Workshops/Distance/Slides/jackson.pdf}



\bibitem{JJ05} 
B.~Jackson and T.~Jord{\'a}n. 
\newblock 
The Dress conjectures on rank in the 3-dimensional rigidity matroid.
\newblock {\em Adv.~in Appl.~Math.},  35: 355--367, 2005.


\bibitem{JJ06} B.~Jackson and T.~Jord{\'a}n. 
\newblock 
On the rank function of the 3-dimensional rigidity matroid.
\newblock {\em Int. J. Comput. Geometry Appl.}, 16: 415--430, 2006.


\bibitem{JJT14}
B.~Jackson, T.~Jord{\'a}n and S.~Tanigawa. 
\newblock 
Combinatorial conditions for the unique completability of low rank matrices.
\newblock {\em
SIAM J. Discrete Math.}, 28:  1797--1819, 2014.

\bibitem{JT}
B.~Jackson and S.~Tanigawa. 
\newblock 
Maximal matroids in weak order posets. Preprint available at \url{https://arxiv.org/abs/2102.09901}.


\bibitem{KT11}
N.~Katoh and S.~Tanigawa. 
\newblock  
A proof of the Molecular conjecture.
\newblock {\em 
Discrete Comput. Geom.}, 45: 647--700, 2011.

\bibitem{K75}
D.~Knuth. 
\newblock  Random matroids.
\newblock {\em  Discrete Math.}, 12: 341--358, 1975.

\bibitem{Lam}
G.~Laman. 
\newblock  On graphs and the rigidity of skeletal structures.
\newblock {\em  J. Eng. Math.},
4: 331--340,   1970.

\bibitem{L76}
M.~Las Vergnas. 
\newblock  On certain constructions for matroids. In {\em Proc.~5th British Conf.~on Combinatorics},
Utilitas Math,  1976: 395--404.

\bibitem{L89}
L.~Lov{\'a}sz. 
\newblock  Singular spaces of matrices and their application in combinatorics.
\newblock {\em 
Bulletin of the Brazilian Mathematical Society}, 20: 87--99, 1989.

\bibitem{LY82}
L.~Lov{\'a}sz and Y.~Yemini. 
\newblock  On generic rigidity in the plane.
\newblock {\em  SIAM J.~Algebraic Discrete Methods}, 3: 91--98, 1982.

\bibitem{M91}
H.~Maehara. 
\newblock  On Graver's conjecture concerning the rigidity problem of graphs.
\newblock {\em  Discrete Comput. Geom.}, 6: 339--342,  1991.

\bibitem{Max}
J. ~C.~Maxwell. 
\newblock  On the calculation of the equilibrium and stiffness of frames.
\newblock {\em  Philos. Mag.}, 27:  294--299, 1864.

\bibitem{N07} E.~Nevo. 
\newblock  On embeddability and stresses of graphs.
\newblock {\em  Combinatorica}, 27: 465--472, 2007.


\bibitem{N79} H.~Nguyen. 
\newblock  Constructing the free erection of a geometry.
\newblock {\em  J.~Comb.~Theory, Ser. B.}, 27: 216--224, 1979.

\bibitem{N10} V.~H.~Nguyen. 
\newblock  On abstract rigidity matroids.
\newblock {\em 
SIAM J.~Discrete Math.}, 24: 363--369,  2010.

\bibitem{O11} J.~G.~Oxley. 
\newblock  {\em Matroid theory}, 2nd edition, Oxford University Press, 2011.



\bibitem{P-G} H.~Pollaczek-Geiringer. 
\newblock  \"{U}ber die Gliederung ebener Fachwerke.
\newblock {\em  Zeitschrift f\"{u}r Angewandte Mathematik und Mechanik (ZAMM)}, 7: 58--72,   1927.

\bibitem{SW} {B.~Schulze and W.~Whiteley}. 
\newblock  Rigidity and scene anaysis. In
{\em Handbook of Discrete and Computational Geometry}, J.~E.~Goodman, J.~O'Rourke, and C.~D.~T\'oth (editors), 3rd edition, CRC Press, Boca Raton, FL, 2017: 1593-1632.



\bibitem{S80} J.~T.~Schwartz. 
\newblock  Fast probabilistic algorithms for verification of polynomial identities.
\newblock {\em 
J. ACM}, 27: 701--717, 1980.



\bibitem{S15} M.~Sitharam. 
\newblock  Recent developments in 3D bar-joint rigidity characterization. Presentation at the BIRS Workshop: Advances in Combinatorial and Geometric Rigidity Theory,  2015,
available at 
\url{http://www.birs.ca/events/2015/5-day-workshops/15w5114/videos}
%




\bibitem{T93} T.-S.~Tay. 
\newblock  On generically dependent bar frameworks in space.
\newblock {\em  Structural Topology}, 20: 27-48, 1993.



\bibitem{T99}  T.-S.~Tay. 
\newblock  On the generic rigidity of bar frameworks.
\newblock {\em  Adv.~in Appl.~Math.}, 23: 14--28, 1999.



\bibitem{TW84} T.-S.~Tay and W.~Whiteley. 
\newblock  Recent advances in the generic rigidity of structures.
\newblock {\em  Structural Topology}, 9:
31--38, 1984.

\bibitem{TW85} T.-S.~Tay and W.~Whiteley. 
\newblock  Generating  isostatic frameworks.
\newblock {\em  Structural Topology}, 11: 20--69,  1985.

\bibitem{Tu} W.~T.~Tutte. 
\newblock  The factorization of linear graphs.
\newblock {\em  J. London Math. Soc.}, 22: 107--111, 1947.

\bibitem{W90}
W.~Whiteley. 
\newblock  The geometry of bivariate $C^1_2$-splines.Technical report, 1990.





\bibitem{Wsurvey} {W.~Whiteley}. 
\newblock 
Some matroids from discrete applied geometry. In {\em Matroid theory} (J.E. Bonin, J.G. Oxley and B. Servatius eds.,
Seattle, WA, 1995),
Contemp. Math., 197, Amer. Math. Soc., Providence, RI, 1996: 171--311.

\bibitem{Z79} R.~Zippel. 
\newblock  Probabilistic algorithms for sparse polynomials.
In {\em   Proceedings of the International Symposium on Symbolic and Algebraic Computation},  1979: 216--226.




\end{thebibliography}


\begin{dajauthors}
\begin{authorinfo}[katie]
Katie Clinch\\Department of Electrical and Electronic Engineering, University of Melbourne\\ Parkville, Victoria, 3010, Australia.\\ 
 katie\imagedot{}clinch\imageat{}unimelb\imagedot{}edu\imagedot{}au
\end{authorinfo}
\begin{authorinfo}[bill]
Bill Jackson\\
School
of Mathematical Sciences, Queen Mary University of London\\
Mile End Road, London E1 4NS, England.\\
b\imagedot{}jackson\imageat{}qmul\imagedot{}ac\imagedot{}uk
\end{authorinfo}
\begin{authorinfo}[shin]
Shin-ichi Tanigawa\\
Department of Mathematical Informatics\\ 
Graduate School of Information Science and Technology, University of Tokyo\\ 
7-3-1 Hongo, Bunkyo-ku, 113-8656,  Tokyo, Japan. \\ 
tanigawa\imageat{}mist\imagedot{}i\imagedot{}u-tokyo\imagedot{}jp
  \end{authorinfo}
\end{dajauthors}

\end{document}